  \newcommand{\IN}{\ensuremath\mathds{N}}                        
  \newcommand{\IR}{\ensuremath\mathds{R}}                        
  \newcommand{\IP}{\ensuremath\mathds{P}}                        
\newcommand*{\setE}{\ensuremath{\mathcal{T}}}                    
\newcommand*{\Gammah}{\Gamma_h}                                  
\renewcommand*{\vec}[1]{{\boldsymbol{#1}}}                       
\DeclareMathAlphabet{\mathbfsf}{\encodingdefault}{\sfdefault}{bx}{n}
\newcommand*{\normal}{\vec{n}}                                   
\newcommand*{\dd}{\mathrm{d}}                                    
\newcommand*{\grad}{\nabla}                                      
\renewcommand*{\div}{\nabla\cdot}                                
\newcommand*{\laplace}{\Delta}                                   
\newcommand*{\strain}{{\boldsymbol{\varepsilon}}}                
\newcommand*{\transpose}[1]{{#1}^\mathrm{T}}                     
\newcommand*{\jump}[1]{[#1]}                                     
\newcommand*{\avg}[1]{\{#1\}}                                    
\newcommand*{\abs}[1]{\ensuremath{|#1|}}                         
\newcommand*{\norm}[2]{\|#1\|_{#2}}                              
\newcommand*{\Lnorm}[1]{\|#1\|}                                  
\newcommand*{\on}[2]{\left.#1\right\vert_{#2}}                   
\newtheorem{assumption}{Assumption}
\crefname{hypothesis}{Hypothesis}{Hypotheses}
\title{Numerical analysis of a discontinuous Galerkin method for Cahn--Hilliard--Navier--Stokes equations}
\author{Chen Liu\thanks{Department of Computational and Applied Mathematics, Rice University, Houston, TX 77005 (\email{chen.liu@rice.edu}, \url{http://cl59.blogs.rice.edu/}).} 
\and B\'eatrice Rivi\`ere\thanks{Department of Computational and Applied Mathematics, Rice University, Houston, TX 77005 (\email{riviere@rice.edu}, \url{http://compm.rice.edu/people-2/beatrice-riviere/}).}}
\newcommand{\bfzeta}{\boldsymbol{\zeta}}
\newcommand{\bfxi}{\boldsymbol{\xi}}
\begin{document}

\maketitle
\hspace{5.75cm}{July~07$^\mathrm{th}$, 2018}\\

\begin{abstract}
In this paper, we derive a theoretical analysis of an interior penalty discontinuous Galerkin methods for solving the Cahn--Hilliard--Navier--Stokes model problem. We prove unconditional unique solvability of the discrete system, obtain unconditional discrete energy dissipation law, and derive stability bounds with a generalized chemical energy density. Convergence of the method is obtained by proving optimal a priori error estimates. Our analysis of the unique solvability is valid for both symmetric and non-symmetric versions of the discontinuous Galerkin formulation.
\end{abstract}

\begin{keywords}
  Cahn--Hilliard--Navier--Stokes, interior penalty discontinuous Galerkin method, unique solvability, stability analysis, error analysis
\end{keywords}

\begin{AMS}
  35G25, 65M60, 65M12, 76D05
\end{AMS}

\section{Introduction}
The Cahn--Hilliard--Navier--Stokes system
strikes an optimal balance in terms of thermodynamical rigor and computational efficiency for modeling two-component binary flow.
The model that belongs to the class of diffusive interphase or phase-field models, attracts much attention in physics, chemistry, biology, and engineering fields. In recent years, driven by the major developments of numerical algorithms and by increased availability of computational resources and capabilities, the direct numerical simulation of Cahn--Hilliard--Navier--Stokes equations has become increasingly popular. The spectrum of applications for this model involves modeling spinodal decomposition \cite{kay2007efficient}, transport processes in porous media \cite{FLABR2018Direct}, and wetting phenomenon \cite{alpak2016phase,FLSAR2017energy}. 
\par
This paper is devoted to the numerical analysis of an  interior penalty discontinuous Galerkin method for the Cahn--Hilliard--Navier--Stokes equations. We prove, in two and three dimensions, the unconditional unique solvability, obtain an unconditional energy dissipation law, and derive stability bounds with generalized chemical energy density function. Convergence of the method is obtained by proving optimal a priori error estimates. The main contributions of this paper are the unique solvability proof and the error estimates. In addition, the technique we use for proving the existence and uniqueness is valid for both symmetric and non-symmetric interior penalty discontinuous Galerkin formulations.
\par
Over the last ten years, the convergence analysis for the Cahn--Hilliard--Navier--Stokes model has been extensively investigated, for numerical schemes based on the continuous finite element method. In \cite{feng2006fully}, continuous $\mathbb{P}_2-\mathbb{P}_0$ elements are used for the approximation of the velocity and pressure whereas continuous $\mathbb{P}_r$ elements, for $r\geq 1$ are used for the approximation of the chemical potential and order parameter. Convergence of the solution is obtained via a compactness argument.   
Kay, Styles, and Welford in \cite{kay2008finite} analyzed semi-discrete and fully discrete finite element schemes in two-dimensions. Under a CFL-like condition, they obtained  a priori error estimates for the semi-discrete method and a convergence proof based on a compactness argument for the fully discrete scheme.  
Diegel, Wang, Wang, and Wise in \cite{diegel2017convergence} analyzed a second order in time mixed finite element method, based on Crank--Nicolson method. Continuous $\mathbb{P}_r$ elements are used for the chemical potential, order parameter and pressure whereas continuous $\mathbb{P}_{r+1}$ are
used for the velocity with any positive integer $r$.  The work contains unconditional energy stability and optimal error estimates. 
In \cite{han2015second,CaiShen18}, a projection method is used to handle the Navier--Stokes equations. Han and Wang introduce a second order in time method and show unconditional unique solvability of the algorithm.  The work \cite{han2015second} does not contain any theoretical proof of convergence of the solution.  Cai and Shen obtain unconditional unique solvability, derive error estimates and show a convergence analysis based on a compactness argument. In \cite{CaiShen18}, both chemical potential and order parameter are approximated by continuous $\mathbb{P}_2$ elements and the velocity and pressure are approximated by a stable pair of finite element spaces.
In addition of using continuous finite elements in space, all the works mentioned above assume a special form of chemical energy density, namely a double-well potential, also refered to as Ginzburg--Landau potential.  The coupling term in the momentum equation of the Navier--Stokes system may take
several forms, that yield different numerical methods and impact their analysis.  We note that 
in \cite{feng2006fully,diegel2017convergence,CaiShen18}, the coupling term is the product of the chemical potential and the gradient of  the order parameter.  In the other works \cite{kay2008finite,han2015second} as well as in our present work, the coupling term is the product of the order parameter and the gradient of the chemical potential. 
\par
To the best of our knowledge, the theoretical analysis for fully discrete interior penalty discontinuous Galerkin scheme of the Cahn--Hilliard--Navier--Stokes system is not yet available in the literature.
While there are few works on the numerical analysis of the coupled Cahn--Hilliard and Navier--Stokes equations, the literature on numerical methods for solving the Cahn--Hilliard equation (resp. the Navier--Stokes equations) is abundant.  Finite element methods and interior penalty discontinuous
Galerkin methods have been employed for each equation separately.  We refer the reader to \cite{feng2007fully,kay2009discontinuous,aristotelous2013mixed,Eyre1998EyreScheme} for the error analysis of Cahn--Hilliard equations and to \cite{temam2001navier,GirRav86,GuermondMinevShen,girault2005splitting,girault2005discontinuous} for the Navier--Stokes equations, and the references herein for a non-exhaustive list.
\par
The outline of the paper follows. The mathematical model and related analytical properties are described in \cref{sec:CHNS:model}. The numerical method and analysis, including the proof of unique solvability, stability analysis, and error analysis are addressed in \cref{sec:CHNS:numerical_analysis}. Conclusions are given in the last section.

\section{Mathematical Model}\label{sec:CHNS:model}
Let $\Omega\subset\IR^d$, where $d=2$ or $3$, be an open bounded polyhedral domain and $\normal$ denote the outward normal of $\Omega$. The unknown variables in Cahn--Hilliard--Navier--Stokes equations are the order parameter $c$, the chemical potential $\mu$, the velocity $\vec{v}$ and the pressure $p$, satisfying:
\begin{subequations}\label{eq:CHNS:model}
\begin{align}
\partial_t c - \Delta \mu  + \div{(c\vec{v})}&= 0, && \text{in}~(0,\, T)\times\Omega, \label{eq:CHNS:model:a}\\
\mu &= \Phi'(c) - \kappa\, \laplace c, && \text{in}~(0,\, T)\times\Omega,\label{eq:CHNS:model:b}\\
\partial_t \vec{v} + \vec{v}\cdot\grad{\vec{v}} - \mu_\mathrm{s}\laplace{\vec{v}} &= -\grad{p} - c\grad{\mu}, && \text{in} ~(0,\, T)\times\Omega, \label{eq:CHNS:model:c}\\
\div{\vec{v}} &= 0, && \text{in} ~(0,\, T)\times\Omega. \label{eq:CHNS:model:d}
\end{align}
Equations~\eqref{eq:CHNS:model:a} and~\eqref{eq:CHNS:model:b} represent the mass conservation equations for two components. The parameter $\kappa$ is a positive constant, which is related to the thickness of the interface between the two phases. The function $\Phi$ is a scalar potential function, also called chemical energy density.
Equations~\eqref{eq:CHNS:model:c} and ~\eqref{eq:CHNS:model:d} are the momentum and incompressibility equations respectively. The parameter $\mu_\mathrm{s}$ is the fluid viscosity.
For our model problem, the following boundary and initial conditions are added:
\begin{align}
\grad c\cdot\normal &= 0, && \text{on}~(0,\, T)\times\partial\Omega,\label{eq:CHNS:model:out_c}\\
\grad\mu \cdot\normal &= 0, && \text{on}~(0,\, T)\times\partial\Omega,\label{eq:CHNS:model:normalFluxMu} \\
\vec{v} &= \vec{0}, && \text{on}~(0,\, T)\times\partial\Omega,\label{eq:CHNS:model:boundary_v}\\
c &= c^0, && \text{in}~\{0\}\times\Omega, \label{eq:CHNS:model:initial_c}\\
\vec{v} &= \vec{v}^0, && \text{in}~\{0\}\times\Omega. \label{eq:CHNS:model:initial_v}
\end{align}
The pressure $p$ is uniquely defined up to an additive constant, to close this system, we also assume the mean pressure on $\Omega$ is zero:
\begin{equation}\label{eq:CHNS:model:avgP0}
\int_\Omega p = 0.
\end{equation}
\end{subequations}
\begin{remark}
The order parameter $c$ can either be a volume or a mass fraction of one of the two components $c_1,~c_2$ or the difference between mass fractions. In the former case, for instance $c = c_1$\,, from the definition of the fraction it is straightforward to see $c \in [0,1]$\,. In the latter case, for instance $c= c_1 - c_2$\,, due to the constraint $c_1+c_2=1$ we have $c \in [-1,1]$\,.
\end{remark}
\begin{remark}
Under the assumption of the incompressibility constraint \eqref{eq:CHNS:model:d}, it is possible to consider employing advection operator $\vec{v}\cdot\grad{c}$ in \eqref{eq:CHNS:model:a} in nonconservative form instead of $\div{(c\vec{v})}$ in conservative form. However, for the convenience of proving discrete global mass conservation property, we propose to use the conservative form here. 
\end{remark}
\begin{remark}
The diffusion operator $-\laplace{\vec{v}}$ and the convection operator $\vec{v}\cdot\grad{\vec{v}}$ in \eqref{eq:CHNS:model:c} can be replaced by more general forms of $-2\div{\strain(\vec{v})}$ and $\div{(\vec{v}\otimes\vec{v})}$\,, where the deformation tensor $\strain(\vec{v}) = \frac{1}{2} \big(\grad \vec{v} + \transpose{(\grad\vec{v})}\big)$\,. This equivalence directly comes from the identities
\begin{align*}
2\div{\strain(\vec{v})} &= \grad{(\div{\vec{v}})} + \laplace{\vec{v}},\\
\div(\vec{v}\otimes\vec{v}) &= \vec{v}\cdot\grad{\vec{v}} + (\div{\vec{v}})\vec{v}.
\end{align*}
Note since these operators are mathematically equivalent, one might consider formulating problems more generally. However, the modifications may lead unexpected behaviors of the velocity field at the outflow boundary in open boundary simulation. The details refer to \cite{heywood1996artificial}.
\end{remark}
\begin{remark}
The second term on the right-hand side of \eqref{eq:CHNS:model:c} expresses the phase introduced force. This term appears differently in different literatures and some authors propose the form $-\div(\grad{c}\otimes\grad{c})$~\cite{liu2003phase}, or $\mu\grad{c}$~\cite{Badalassi2003}, or $-c\,\grad\mu$~\cite{DingSpelt2007}. It can be shown that the three expressions are equivalent by redefining the pressure $p$ \cite{feng2006fully}.
\end{remark}

In the rest of this section, we briefly summarize some well-known analytical properties of Cahn--Hilliard--Navier--Stokes model.
\paragraph{Well-posedness}
A weak formulation of the Cahn--Hilliard--Navier--Stokes system \eqref{eq:CHNS:model} is proposed as finding the quaternion $(c,\, \mu,\, \vec{v},\, p)$, where
\begin{align*}
c       \in L^{\infty}\big(0,\, T;\, H^1(\Omega)\big) \cap L^4\big(0,\, T;\, L^\infty(\Omega)\big), &&
\mu     \in L^2\big(0,\, T;\, H^1(\Omega)\big), \\
\vec{v} \in L^2\big(0,\, T;\, H_0^1(\Omega)^d\big) \cap L^{\infty}\big(0,\, T;\, L^2(\Omega)^d\big), &&
p       \in L^2\big(0,\, T;\, L_0^2(\Omega)\big), \\
\partial_t c        \in L^2\big(0,\, T;\, H^{-1}(\Omega)\big), &&
\partial_t \vec{v}  \in L^2\big(0,\, T;\, H^{-1}(\Omega)^d\big),
\end{align*}
such that for a.\,e. $t \in (0,\,T)$,
\begin{subequations}\label{eq:CHNS:weak_formulation}
\begin{eqnarray}
\langle\partial_t c,\, \chi\rangle + \big(\grad{\mu},\, \grad{\chi}\big) - (c\vec{v},\, \grad{\chi}) = 0, \quad \forall \chi \in H^1(\Omega),\\
(\mu,\, \varphi) - \big(\Phi'(c),\, \varphi\big) - \kappa(\grad{c},\, \grad{\varphi}) = 0, \quad \forall \varphi \in H^1(\Omega),\\
\begin{aligned}
\langle\partial_t \vec{v},\, \vec{\theta}\rangle + (\vec{v}\cdot\grad{\vec{v}},\, \vec{\theta}) + \mu_\mathrm{s}(\grad{\vec{v}},\, \grad{\vec{\theta}}) \hspace*{9.5em}\\ - (\div{\vec{\theta}},\, p) + (c\vec{\theta},\, \grad{\mu}) = 0, \quad \forall \vec{\theta} \in H_0^1(\Omega)^d,
\end{aligned}\\
(\div{\vec{v}},\, \phi) = 0, \quad \forall \phi \in L_0^2(\Omega),
\end{eqnarray}
\end{subequations}
with initial data
\begin{align*}
c(0) &\in \big\{c\in H^2(\Omega):~\grad{c}\cdot\normal=0,~\text{on}~\partial\Omega\big\}, \\
\vec{v}(0) &\in \big\{\vec{v}\in H_0^1(\Omega)^d:~(\div{\vec{v}},\, \phi)=0,~\forall\phi\in L_0^2(\Omega)\big\}.
\end{align*}
The $L^2$ inner-product is denoted by $(\cdot,\cdot)$ and the duality pairing by $\langle\cdot,\cdot\rangle$.
Standard notation is used for the Sobolev and Bochner spaces and we recall that $L_0^2(\Omega)$ denotes the space of $L^2$ functions with zero average.
The existence of the weak solution to \eqref{eq:CHNS:weak_formulation} follows the argument as in \cite{diegel2017convergence}. A generalized version of Cahn--Hilliard--Navier--Stokes model, in which the deformation tensor $\strain{(\vec{v})}$ was employed and the capillary stress tensor related term was expressed as $-\div{(\grad{c}\otimes\grad{c})}$\,, was studied in \cite{liu2003phase}. The authors there show that with periodic boundary conditions the existence of weak solution is guaranteed.

\paragraph{Mass conservation}
Let $\bar{c}_0$ denote the mass average at time $t_0$. The solution of the model problem \eqref{eq:CHNS:model} enjoys the global mass conservation property \cite{FLAR2018finite}.
\begin{theorem}\label{thm:CHNS:mass_conservation}
The total amount of the order parameter $c$ is preserved, i.\,e., for any $t\in (0,\, T)$, we have
\begin{equation*}
\frac{1}{\abs{\Omega}} \int_\Omega c = \frac{1}{\abs{\Omega}} \int_\Omega c^0 = \bar{c}_0.
\end{equation*}
\end{theorem}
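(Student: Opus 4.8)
The plan is to integrate the mass-conservation equation~\eqref{eq:CHNS:model:a} over the spatial domain $\Omega$ and to exploit the boundary conditions to annihilate the flux contributions. First I would write
\begin{equation*}
\int_\Omega \partial_t c - \int_\Omega \laplace\mu + \int_\Omega \div(c\vec{v}) = 0,
\end{equation*}
and treat each of the three terms separately.

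For the first term, assuming the solution is regular enough to exchange differentiation in time with integration in space, I would rewrite $\int_\Omega \partial_t c = \frac{\dd}{\dd t}\int_\Omega c$. For the second term, I would use the identity $\laplace\mu = \div(\grad\mu)$ together with the divergence theorem to obtain $\int_\Omega \laplace\mu = \int_{\partial\Omega}\grad\mu\cdot\normal$, which vanishes by the homogeneous Neumann condition~\eqref{eq:CHNS:model:normalFluxMu}. For the third term, the divergence theorem gives $\int_\Omega \div(c\vec{v}) = \int_{\partial\Omega} c\,\vec{v}\cdot\normal$, which vanishes because of the no-slip boundary condition~\eqref{eq:CHNS:model:boundary_v}, namely $\vec{v} = \vec{0}$ on $\partial\Omega$.

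Combining these observations yields $\frac{\dd}{\dd t}\int_\Omega c = 0$, so that $\int_\Omega c$ is constant in time. Evaluating at the initial time and using~\eqref{eq:CHNS:model:initial_c} gives $\int_\Omega c = \int_\Omega c^0$ for every $t \in (0,\, T)$; dividing by $\abs{\Omega}$ produces the claimed identity $\frac{1}{\abs{\Omega}}\int_\Omega c = \frac{1}{\abs{\Omega}}\int_\Omega c^0 = \bar{c}_0$.

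The computation is elementary, so I do not expect a genuine obstacle; the only points requiring care are the regularity needed to justify interchanging $\partial_t$ with the spatial integral (which is covered by the assumed Bochner regularity of the weak solution, in particular $\partial_t c\in L^2(0,\, T;\, H^{-1}(\Omega))$) and the correct sign bookkeeping when invoking the divergence theorem twice. It is worth emphasizing that the argument relies crucially on the \emph{conservative} form $\div(c\vec{v})$ of the advection term, as already flagged in the remarks: this is precisely what allows the divergence theorem to convert the convective contribution into a boundary flux that is eliminated by the no-slip condition, whereas the nonconservative form $\vec{v}\cdot\grad{c}$ would leave a residual $\int_\Omega (\div\vec{v})\,c$ that only vanishes upon separately invoking the incompressibility constraint~\eqref{eq:CHNS:model:d}.
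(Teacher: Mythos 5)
Your argument is correct, and it is the standard proof of this fact; note, however, that the paper itself does not prove \cref{thm:CHNS:mass_conservation} at all --- it defers to the citation \cite{FLAR2018finite} --- so there is no internal proof to compare against. Two refinements are worth making. First, your computation is a strong-form argument: applying the divergence theorem to $\int_\Omega \laplace\mu$ and $\int_\Omega \div(c\vec{v})$ presumes more regularity than the weak solution of \eqref{eq:CHNS:weak_formulation} is guaranteed to have (there $\mu$ only lies in $L^2\big(0,T;H^1(\Omega)\big)$, so $\laplace\mu$ need not be a function and the trace $\grad\mu\cdot\normal$ need not exist). The cleaner route at the weak level is to take $\chi\equiv 1$ in \eqref{eq:CHNS:weak_formulation}: since $\grad\chi=\vec{0}$, the terms $(\grad\mu,\grad\chi)$ and $(c\vec{v},\grad\chi)$ vanish identically and one reads off $\langle\partial_t c,1\rangle=0$, hence $t\mapsto\int_\Omega c$ is constant, with no integration by parts and no explicit invocation of the boundary conditions beyond what the weak formulation already encodes. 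Second, this test-with-a-constant mechanism is precisely what the paper uses for the discrete analogue in \cref{thm:CHNS:discrete_mass_conservation}, where $\chi=1$ is chosen in \eqref{eq:CHNS:DGscheme:a} together with $a_{\mathcal{D}}(\mu_h^n,1)=0$ and $a_{\mathcal{A}}(c_h^{n-1},\vec{v}_h^n,1)=0$; phrasing the continuous proof the same way makes the parallel between the continuous and discrete conservation statements transparent. Your closing remark on the conservative form is apt in the strong-form setting; in the weak and DG settings the same point reappears as the structural fact that the advection form tested against a constant vanishes by construction.
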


\paragraph{Energy dissipation}
Benefitting from the boundary conditions (\ref{eq:CHNS:model:out_c}-\ref{eq:CHNS:model:boundary_v})\,, the Cahn--Hilliard--Navier--Stokes model \eqref{eq:CHNS:model} is an energy dissipative system. Analysis of a similar model can be found in \cite{shen2015decoupled}. Define the total energy as follows
\begin{equation}\label{eq:CHNS:energy}
F(c,\vec{v})=\underbrace{\int_\Omega \frac{1}{2}\abs{\vec{v}}^2}_{\text{kinetic energy}} + \underbrace{\int_\Omega \Big(\Phi(c) + \frac{\kappa}{2}\abs{\grad c}^2\Big)}_{\text{Helmholtz free energy}}.
\end{equation}
Then \eqref{eq:CHNS:model} satisfies the following energy dissipation law and for technique details of the proof we refer to \cite{FLSAR2017energy}.
\begin{theorem}
The total energy is non-increasing in time, i.\,e., $\dd_t F(c,\vec{v})(t) \leq 0$ for any $t\in (0,\, T) $. We have the identity
\begin{equation}\label{eq:CHNS:energy_dissipation}
\frac{\dd}{\dd t} \int_\Omega \Big(\frac{1}{2}\abs{\vec{v}}^2 + \Phi(c) + \frac{\kappa}{2}\abs{\grad c}^2\Big) = -\int_{\Omega} \Big(\mu_\mathrm{s}\,\grad{\vec{v}}:\grad{\vec{v}} + \grad{\mu}\cdot\grad\mu\Big) \leq 0.
\end{equation}
\end{theorem}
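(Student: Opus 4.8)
The plan is to establish the energy identity by testing each governing equation with a physically natural multiplier, integrating over $\Omega$, and exploiting the boundary conditions together with the incompressibility constraint so that every term either collapses into an exact time derivative of one of the energy components or into a manifestly non-negative dissipation integral. The only genuinely delicate point is the exact cancellation of the two capillary coupling terms, which is precisely what turns the estimate into an \emph{identity} rather than a mere inequality.

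First I would test the momentum equation \eqref{eq:CHNS:model:c} with $\vec{v}$ and integrate over $\Omega$. The time-derivative term yields $\frac{\dd}{\dd t}\int_\Omega \frac{1}{2}\abs{\vec{v}}^2$. For the convection term I would use $\div{\vec{v}}=0$ to write $(\vec{v}\cdot\grad{\vec{v}})\cdot\vec{v} = \frac{1}{2}\div{(\abs{\vec{v}}^2\vec{v})}$, so that after integration by parts it vanishes thanks to the no-slip condition \eqref{eq:CHNS:model:boundary_v}. Integration by parts on the viscous term produces $\mu_\mathrm{s}\int_\Omega \grad{\vec{v}}:\grad{\vec{v}}$ with no boundary contribution, again by \eqref{eq:CHNS:model:boundary_v}, while the pressure term satisfies $-\int_\Omega \grad{p}\cdot\vec{v} = \int_\Omega p\,\div{\vec{v}} = 0$ by incompressibility and the vanishing boundary trace of $\vec{v}$. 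This leaves
\begin{equation*}
\frac{\dd}{\dd t}\int_\Omega \frac{1}{2}\abs{\vec{v}}^2 + \mu_\mathrm{s}\int_\Omega \grad{\vec{v}}:\grad{\vec{v}} = -\int_\Omega c\,\grad{\mu}\cdot\vec{v}.
\end{equation*}

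Next I would test the order-parameter equation \eqref{eq:CHNS:model:a} with $\mu$. The diffusion term gives $-\int_\Omega \laplace{\mu}\,\mu = \int_\Omega \abs{\grad{\mu}}^2$ after integration by parts, the boundary term vanishing by the no-flux condition \eqref{eq:CHNS:model:normalFluxMu}; the transport term becomes $\int_\Omega \div{(c\vec{v})}\,\mu = -\int_\Omega c\,\vec{v}\cdot\grad{\mu}$ using \eqref{eq:CHNS:model:boundary_v}. To rewrite the remaining term $\int_\Omega \partial_t c\,\mu$ in energetic form I would substitute the chemical-potential relation \eqref{eq:CHNS:model:b}: the $\Phi'(c)$ contribution gives $\frac{\dd}{\dd t}\int_\Omega \Phi(c)$ by the chain rule, and the $-\kappa\laplace{c}$ contribution, after integration by parts using the no-flux condition \eqref{eq:CHNS:model:out_c}, gives $\frac{\kappa}{2}\frac{\dd}{\dd t}\int_\Omega \abs{\grad{c}}^2$. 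Collecting these yields
\begin{equation*}
\frac{\dd}{\dd t}\int_\Omega \Big(\Phi(c) + \frac{\kappa}{2}\abs{\grad{c}}^2\Big) + \int_\Omega \abs{\grad{\mu}}^2 = \int_\Omega c\,\vec{v}\cdot\grad{\mu}.
\end{equation*}

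Adding the two identities is where the key cancellation occurs: the coupling term $-\int_\Omega c\,\grad{\mu}\cdot\vec{v}$ from the momentum balance is exactly the negative of the term $\int_\Omega c\,\vec{v}\cdot\grad{\mu}$ produced by the transport term in the order-parameter equation, so the two annihilate. What remains is precisely \eqref{eq:CHNS:energy_dissipation}, with a non-positive right-hand side since $\mu_\mathrm{s} > 0$ and both integrands are squares. I expect the main subtlety to be bookkeeping the sign conventions so that this cancellation is transparent; it is crucial here that the coupling uses the conservative form $\div{(c\vec{v})}$ in \eqref{eq:CHNS:model:a} together with the multiplier $-c\,\grad{\mu}$ in \eqref{eq:CHNS:model:c}, as it is exactly this pairing that forces the two capillary terms to match. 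All boundary terms are eliminated by the homogeneous conditions (\ref{eq:CHNS:model:out_c}--\ref{eq:CHNS:model:boundary_v}), so no regularity beyond that of the weak solution described above is needed to justify the integrations by parts at the formal level.
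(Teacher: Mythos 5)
Your proposal is correct and is precisely the standard energy argument that the paper has in mind: the paper itself omits the details, deferring to \cite{FLSAR2017energy}, but testing the momentum equation with $\vec{v}$, testing the order-parameter equation with $\mu$, substituting the chemical-potential relation \eqref{eq:CHNS:model:b}, and letting the two coupling terms $-\int_\Omega c\,\grad\mu\cdot\vec{v}$ and $+\int_\Omega c\,\vec{v}\cdot\grad\mu$ cancel is exactly the intended derivation of \eqref{eq:CHNS:energy_dissipation}. All of your integrations by parts and boundary-term eliminations are valid as stated, and your observation that the pairing of the conservative form $\div(c\vec{v})$ with the forcing $-c\,\grad\mu$ is what makes the cancellation exact is the same structural point the paper exploits at the discrete level in \cref{rem:CHNS:relation_aA_bI}.
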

\begin{remark}
The energy dissipation law still holds in case of changing the diffusion term $-\mu_\mathrm{s}\laplace{\vec{v}}$ in \eqref{eq:CHNS:model:c} to $-2\div\big(\mu_\mathrm{s}\strain(\vec{v})\big)$\,. The modification of the proof can be easily obtained by using the tensor identity 
\begin{equation*}
\div{\big(\mu_\mathrm{s}\strain(\vec{u})\vec{v}\big)} = \mu_\mathrm{s}\,\strain(\vec{u}):\strain(\vec{v}) + \div{\big(\mu_\mathrm{s}\strain(\vec{u})\big)}\cdot\vec{v}.
\end{equation*}
Note here, in this general case, $\mu_\mathrm{s}$ is not required to be a constant any more.
\end{remark}

\paragraph{Chemical energy density}
\begin{subequations}\label{eq:CHNS:chemical_energy_density}
The chemical energy density $\Phi$ may take several forms. Two popular expressions of $\Phi$ are the Ginzburg--Landau double well potential \cite{novick2008cahn},
\begin{equation}\label{eq:CHNS:chemical_energy_density:GL}
\Phi(c) = \frac{1}{4}(1+c)^2(1-c)^2, 
\end{equation}
and the logarithmic potential \cite{barrett1999finite}, 
\begin{equation}\label{eq:CHNS:chemical_energy_density:Log}
\Phi(c) = \frac{\vartheta}{2}\Big((1+c)\log{(\frac{1+c}{2})}+(1-c)\log{(\frac{1-c}{2})}\Big)+\frac{\vartheta_\mathrm{c}}{2}(1-c^2). 
\end{equation}
The parameters $\vartheta$ and $\vartheta_\mathrm{c}$ are positive constants. 
\end{subequations}
%
\paragraph{Convex-concave decomposition}
Throughout our analysis, we assume the chemical energy density $\Phi \in \mathcal{C}^2$, i.\,e., $\Phi$ is a two times continuously differentiable function with respect to $c$. Any $\mathcal{C}^2$ function can be decomposed into the sum of a convex part and a concave part~\cite{YuilleRangarajan2003}. We write 
\begin{equation}\label{eq:convexconcave}
\Phi(c) = \Phi_{+}(c)+\Phi_{-}(c),
\end{equation}
where $\Phi_+$ is a convex function and $\Phi_-$ a concave function. Although the convex-concave splitting for any $\mathcal{C}^2$ function always exists, the decomposition is not unique. We show two examples of splitting for the chemical energy density:
\begin{itemize}
\item[i.] Convex-concave splitting for the Ginzburg--Landau double well potential:
\begin{align*}
\Phi_{+}(c) = \frac{1}{4}(1+c^4), && 
\Phi_{-}(c) = -\frac{1}{2}c^2. 
\end{align*}
\item[ii.] Convex-concave splitting for the logarithmic potential:
\begin{align*}
\Phi_{+}(c) = \frac{\vartheta}{2}\Big((1+c)\log{(\frac{1+c}{2})}+(1-c)\log{(\frac{1-c}{2})}\Big), &&
\Phi_{-}(c) = \frac{\vartheta_\mathrm{c}}{2}(1-c^2).
\end{align*}
\end{itemize}
\begin{remark}
The logarithmic potential is also called Flory--Huggins potential. For some choices of the parameters $\vartheta$ and $\vartheta_\mathrm{c}$\,, the minimum of this potential may take negative value. Although the logarithmic potential is only defined on a finite interval, many authors define an extension over $\IR$ for convenience in numerical simulations, for instance see \cite{barrett1999finite,yang2017linear}.
\end{remark}

\section{Numerical Analysis}\label{sec:CHNS:numerical_analysis}
In this section, we introduce an interior penalty discontinuous Galerkin method for the Cahn--Hilliard--Navier--Stokes system and analyze their numerical properties. These include uniquely solvability of the scheme, discrete mass conservation, energy dissipation, stability and error bounds. Our results are valid
for any general form for the chemical energy density at the exception of the priori error bounds that are valid with additional assumptions on the chemical energy density. 

\subsection{Preliminaries}\label{sec:CHNS:preliminaries}
\paragraph{Domain and triangulation}
Let $\setE_h = \{E_k\}$ be a family of conforming nondegenerate (also called regular) meshes of the domain $\Omega$. The parameter $h$ denotes the maximum element diameter. Let $\Gammah$ denote the set of interior faces. For each interior face $e \in \Gammah$ shared by elements $E_{k^-}$ and $E_{k^+}$, we define a unit normal vector $\normal_e$ that points from $E_{k^-}$ into $E_{k^+}$. For the face $e$ on boundary $\partial\Omega$, i.\,e., $e = E_{k^-} \cap \partial\Omega$, the normal $\normal_e$ is taken to be the unit outward vector to $\partial\Omega$. We also denote by $\normal_E$ the unit normal vector outward to the element $E$. The natural spaces to work with DG methods are the broken Sobolev spaces. For any real number $r$, we introduce 
\begin{equation*}
H^r(\setE_h) = \big\{\omega\in L^2(\Omega):~\forall E \in \setE_h,\, \on{\omega}{E} \in H^r(E) \big\}. 
\end{equation*}
The average and jump of any scalar quantity $\omega$ is defined for each interior face $e\in\Gammah$ by
\begin{align*}
\avg{\omega} = \frac{1}{2}\on{\omega}{E_{k^-}}\! + \frac{1}{2}\on{\omega}{E_{k^+}}, &&
\jump{\omega} = \on{\omega}{E_{k^-}}\! - \on{\omega}{E_{k^+}}.
\end{align*}
If $e$ belongs to the boundary $\partial\Omega$, the jump and average of $\omega$ coincide with its trace on $e$. The related definitions of any vector quantity in $H^r(\setE_h)^d$ are similar~\cite{riviere2008}.

\paragraph{DG forms}
We introduce the forms
\begin{align*}
a_{\mathcal{A}}: H^2(\setE_h) \times H^2(\setE_h)^d \times H^2(\setE_h) &\rightarrow \IR, \\ a_{\mathcal{C}}: H^2(\setE_h)^d \times H^2(\setE_h)^d \times H^2(\setE_h)^d \times H^2(\setE_h)^d &\rightarrow \IR, \\
a_{\mathcal{D}}: H^2(\setE_h) \times H^2(\setE_h) &\rightarrow \IR, \\
a_{\strain}: H^2(\setE_h)^d \times H^2(\setE_h)^d &\rightarrow \IR, \\
b_\mathcal{P}: H^1(\setE_h) \times H^2(\setE_h)^d &\rightarrow \IR, \\
b_\mathcal{I}: H^2(\setE_h) \times H^2(\setE_h) \times H^2(\setE_h)^d &\rightarrow \IR,
\end{align*} 
corresponding to DG discretization of the advection term $\div{(c\vec{v})}$\,, convection term $\vec{v}\cdot\grad{\vec{v}}$\,, elliptic operator $-\laplace{c}$\,, diffusion term $-\laplace{\vec{v}}$\,, pressure term $\grad{p}$\,, and interface term $-c\grad{\mu}$\,, respectively
\begin{subequations}\label{eq:CHNS:DG_forms}
\begin{align}
a_{\mathcal{A}}(c,\vec{v},\chi) =&  
-\sum_{E\in\setE_h} \int_E c\,\vec{v}\cdot\grad{\chi}
+\sum_{e\in\Gammah} \int_e \avg{c} \avg{\vec{v}\cdot\normal_e} \jump{\chi},
\label{eq:CHNS:DG_advection} \\
\begin{split}\label{eq:CHNS:DG_convection}
a_{\mathcal{C}}(\vec{w},\vec{v},\vec{z},\vec{\theta}) =&
\sum_{E\in\setE_h}\Big(\int_E (\vec{v}\cdot\grad\vec{z})\cdot\vec{\theta} 
+ \int_{\partial E_{-}^\vec{w}} \abs{\avg{\vec{v}}\cdot\normal_E}\,(\vec{z}^\mathrm{int}-\vec{z}^\mathrm{ext})\cdot\vec{\theta}^\mathrm{int}\Big)\\
&+ \frac{1}{2} \sum_{E\in\setE_h}\int_E (\div{\vec{v}})\,\vec{z}\cdot{\vec{\theta}}
- \frac{1}{2} \sum_{e\in\Gammah\cup\partial\Omega}\int_e \jump{\vec{v}\cdot\normal_e}\avg{\vec{z}\cdot\vec{\theta}},
\end{split}\\
\begin{split}\label{eq:CHNS:DG_diffusion}
a_{\mathcal{D}}(c,\chi) =&
\sum_{E\in\setE_h} \int_E \grad c \cdot \grad \chi
-\sum_{e\in\Gammah} \int_e \avg{\grad c \cdot \normal_e} \jump{\chi}\\
&-\sum_{e\in\Gammah} \int_e \avg{\grad \chi \cdot \normal_e} \jump{c}
+ \frac{\sigma}{h} \sum_{e\in\Gammah}\int_e \jump{c}\jump{\chi},
\end{split}\\
\begin{split}\label{eq:CHNS:DG_strain}
a_\strain(\vec{v}, \vec{\theta}) =& 
\sum_{E\in\setE_h} \int_E \grad{\vec{v}}:\grad{\vec{\theta}} 
- \sum_{e\in\Gammah\cup\partial\Omega} \int_e\avg{\grad{\vec{v}}\cdot\normal_e}\cdot\jump{\vec{\theta}}\\
&-\sum_{e\in\Gammah\cup\partial\Omega} \int_e\avg{\grad{\vec{\theta}}\cdot\normal_e}\cdot\jump{\vec{v}}
+ \frac{\sigma}{h}\sum_{e\in\Gammah\cup\partial\Omega} \int_e \jump{\vec{v}}\cdot\jump{\vec{\theta}},
\end{split}\\
b_{\mathcal{P}}(p,\vec{\theta}) =&
-\sum_{E\in\setE_h} \int_E p\div{\vec{\theta}}
+\sum_{e\in\Gammah\cup\partial\Omega} \int_e \avg{p}\jump{\vec{\theta}\cdot\normal_e},
\label{eq:CHNS:DG_pressure}\\
b_\mathcal{I}(c,\mu,\vec{\theta}) =& 
-\sum_{E\in\setE_h} \int_E c\,\grad{\mu}\cdot\vec{\theta}\,
+\sum_{e\in\Gammah} \int_e \avg{c}\jump{\mu}\avg{\vec{\theta}\cdot\normal_e}.
\label{eq:CHNS:DG_interface}
\end{align}
\end{subequations}
In \eqref{eq:CHNS:DG_convection}, the set $\partial E_{-}^\vec{w} = \big\{\vec{x}\in\partial E: \avg{\vec{w}}\cdot\normal_E <0 \big\}$ and the superscript $\mathrm{int}$ (resp. $\mathrm{ext}$) refers to the trace of the function on a face of $E$ coming from the interior of $E$ (resp. coming from the exterior of $E$ on that face), in addition, if the face lies on the boundary of the domain, we take the exterior trace to be zero. 
For more details related to \eqref{eq:CHNS:DG_convection}, we refer the reader to \cite{girault2005discontinuous}. 
The derivation of these DG forms are given in \cite{riviere2008}. 
We recall that since we use symmetric bilinear forms the penalty parameter, $\sigma$, has to be chosen large enough.
\begin{remark}\label{rem:CHNS:relation_aA_bI}
In order to ensure the discrete unconditional energy dissipation in a convenient manner, here we specially design our numerical discretization to satisfy $a_{\mathcal{A}}(c,\vec{v},\mu)=b_\mathcal{I}(c,\mu,\vec{v})$ for any $c, \mu$ and  $\vec{v}$. 
\end{remark}

\subsection{Numerical scheme}\label{sec:CHNS:DGscheme}
\paragraph{DG scheme}
Uniformly partition $[0,\,T]$ into $N$ subintervals and let $\tau$ be the time step length. For any fixed positive integer $q\in\IN_+$\,, the set $\IP_q(E)$ denotes all polynomials of degree at most $q$ on an element $E$. Define the following broken polynomial spaces
\begin{align*}
S_h &= \big\{\omega\in L^2(\Omega):~\forall E \in \setE_h,\, \on{\omega}{E} \in \IP_q(E) \big\}, \quad M_h = S_h \cap L_0^2(\Omega),\\
\mathbf{X}_h &= \big\{\vec{\theta} \in L^2(\Omega)^d:~\forall E \in \setE_h,\, \on{\vec{\theta}}{E} \in \IP_{q}(E)^d \big\},\\
Q_h &= \big\{\omega\in L^2_0(\Omega):~\forall E \in \setE_h,\, \on{\omega}{E} \in \IP_{q-1}(E) \big\}, \\
\mathbf{V}_h &= \big\{\vec{\theta} \in \mathbf{X}_h:~\forall \phi \in Q_h, ~b_{\mathcal{P}}(\phi,\vec{\theta}) = 0\big\},
\end{align*}
We employ the implicit Euler method with Picard's linearization for temporal discretization. The fully discrete mixed convex-concave splitting DG scheme reads: \\
for any $1\leq n \leq N$\,, given $c_h^{n-1} \in S_h$ and $\vec{v}_h^{n-1} \in \mathbf{X}_h$ find $(c_h^n, \mu_h^n, \vec{v}_h^{n}, p_h^n) \in S_h \times S_h \times \mathbf{X}_h \times Q_h$ such that
\begin{subequations}\label{eq:CHNS:DGscheme}
\begin{eqnarray}
(\delta_\tau c_h^n,\chi) + a_{\mathcal{D}}(\mu_h^n,\chi) + a_{\mathcal{A}}(c_h^{n-1},\vec{v}_h^n,\chi) = 0, \quad \forall \chi \in S_h, \label{eq:CHNS:DGscheme:a}\\
(\Phi_{+}\,\!'(c_h^n)+\Phi_{-}\,\!'(c_h^{n-1}),\varphi) + \kappa a_{\mathcal{D}}(c_h^n,\varphi) - (\mu_h^n,\varphi) = 0, \quad \forall \varphi \in S_h, \label{eq:CHNS:DGscheme:b}\\
\begin{aligned}
(\delta_\tau \vec{v}_h^n ,\vec{\theta}) + a_{\mathcal{C}}(\vec{v}_h^{n-1},\vec{v}_h^{n-1},\vec{v}_h^n,\vec{\theta}) + \mu_\mathrm{s} a_\strain(\vec{v}_h^n, \vec{\theta})\hspace*{7.875em}\\ + b_{\mathcal{P}}(p_h^n,\vec{\theta}) - b_\mathcal{I}(c_h^{n-1},\mu_h^n,\vec{\theta}) = 0, \quad \forall \vec{\theta} \in \mathbf{X}_h,
\end{aligned}\label{eq:CHNS:DGscheme:c}\\
b_{\mathcal{P}}(\phi,\vec{v}_h^n) = 0, \quad \forall \phi \in Q_h. \label{eq:CHNS:DGscheme:d}
\end{eqnarray}
\end{subequations}
Here, $\delta_\tau$ denotes the backward finite temporal difference operator:
\[
\delta_\tau c_h^n = \frac{c_h^n-c_h^{n-1}}{\tau}.
\]
The initial data $c_h^0$ and $\vec{v}_h^0$ are good approximations of $c^0$ and $\vec{v}^0$ respectively.
For instance, we choose $\vec{v}_h^0$ as the $L^2$ projection of $\vec{v}^0$ into $\mathbf{X}_h$ and we 
choose $c_h^0 = \mathcal{P}_h c^0$\,, where $\mathcal{P}_h: H^2(\setE_h) \rightarrow S_h$ is the elliptic projection operator:
\begin{align}\label{eq:CHNS:elliptic_proj_c}
a_{\mathcal{D}}(\mathcal{P}_h c - c,\chi) = 0, && \forall \chi \in S_h, && \text{with constraint} && (\mathcal{P}_h c - c,1) = 0,
\end{align}

\paragraph{Operator properties}
Throughout this paper, the semi-norms $\norm{\cdot}{\mathrm{DG}}$ for any scalar quantity $c \in H^1(\setE_h)$ and for any vector quantity $\vec{v} \in H^1(\setE_h)^d$ are defined as follows, respectively
\begin{align*}
\forall c &\in H^1(\setE_h), & \norm{c}{\mathrm{DG}}^2 &= \sum_{E\in\setE_h} \norm{\grad{c}}{L^2(E)}^2 + \frac{\sigma}{h}\sum_{e\in\Gammah} \norm{\jump{c}}{L^2(e)}^2,\\
\forall \vec{v} &\in H^1(\setE_h)^d, & \norm{\vec{v}}{\mathrm{DG}}^2 &= \sum_{E\in\setE_h} \norm{\grad{\vec{v}}}{L^2(E)}^2 + \frac{\sigma}{h}\sum_{e\in\Gammah\cup\partial\Omega} \norm{\jump{\vec{v}}}{L^2(e)}^2.
\end{align*}
Note $\norm{\cdot}{\mathrm{DG}}$ is a norm on $H^1(\setE_h) \cap L_0^2(\Omega)$ and due to the fact that the $d-1$ dimensional Lebesgue measure of $\partial\Omega$ is positive, $\norm{\cdot}{\mathrm{DG}}$ is a norm on $H^1(\setE_h)^d$ as well. Furthermore, the spaces $H^1(\setE_h) \cap L_0^2(\Omega)$ and $H^1(\setE_h)^d$ equipped with above energy norm $\norm{\cdot}{\mathrm{DG}}$ are reflexive Hilbert spaces. 
For readibility, we drop the dimension in the norm notation for vector functions.
Let $p_0$ be the exponent of the Sobolev embedding of $H^1(\Omega)$ into $L^p(\Omega)$ defined by $\frac{1}{p_0} = \frac{1}{2} - \frac{1}{d}$. Then, we have the following result
\begin{lemma}[Poincar\'e's inequality \cite{girault2017strong}]
For each $p \leq p_0$ (exclude infinity when $d=2$), there exists a constant $C_P > 0$ independent of mesh size $h$ such that
\begin{equation*}
\norm{\chi - \frac{1}{\abs{\Omega}}\int_{\Omega}\chi}{L^p(\Omega)} \leq C_P\norm{\chi}{\mathrm{DG}}, \quad\forall \chi\in S_h.
\end{equation*}
We also have 
\[
\norm{\vec{\theta}}{L^p(\Omega)} \leq C_P \norm{\vec{\theta}}{\mathrm{DG}},\quad\forall \vec{\theta}\in \mathbf{X}_h.
\]
\end{lemma}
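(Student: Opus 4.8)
The plan is to reduce the discrete inequality to the classical continuous Sobolev--Poincar\'e embedding $H^1(\Omega)\hookrightarrow L^{p}(\Omega)$ (valid precisely for $p\leq p_0$, with the endpoint $p=\infty$ failing when $d=2$) by splitting any broken polynomial into a conforming part, to which the continuous inequality applies, and a nonconforming remainder controlled entirely by the jump penalty. First I would introduce the averaging (Oswald) operator $\mathcal{A}_h\colon S_h\to S_h\cap H^1(\Omega)$ that redefines each Lagrange degree of freedom as the arithmetic mean of the values coming from the elements sharing it (for the vector case, mapping into $\mathbf{X}_h\cap H_0^1(\Omega)^d$ by setting the boundary nodes to zero). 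The two estimates I will need, obtained from mesh regularity and scaling arguments, are the approximation bounds
\begin{align*}
\sum_{E\in\setE_h}\norm{\grad(\chi-\mathcal{A}_h\chi)}{L^2(E)}^2 &\leq C\sum_{e\in\Gammah}\frac{1}{h}\norm{\jump{\chi}}{L^2(e)}^2,\\
\sum_{E\in\setE_h}\norm{\chi-\mathcal{A}_h\chi}{L^2(E)}^2 &\leq C\sum_{e\in\Gammah}h\,\norm{\jump{\chi}}{L^2(e)}^2,
\end{align*}
which immediately give the DG-stability $\norm{\mathcal{A}_h\chi}{\mathrm{DG}}\leq C\norm{\chi}{\mathrm{DG}}$ once one recognizes the penalty weight $\frac{\sigma}{h}\norm{\jump{\chi}}{L^2(e)}^2$ inside the right-hand sides.

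Next, since $\mathcal{A}_h\chi\in H^1(\Omega)$ I would apply the continuous Poincar\'e--Wirtinger/Sobolev inequality to obtain $\norm{\mathcal{A}_h\chi-\overline{\mathcal{A}_h\chi}}{L^p(\Omega)}\leq C\norm{\grad\mathcal{A}_h\chi}{L^2(\Omega)}\leq C\norm{\chi}{\mathrm{DG}}$, where the overline denotes the mean over $\Omega$. The remaining and most delicate step is to bound the nonconforming remainder $\chi-\mathcal{A}_h\chi$ in the full $L^p$ norm (not merely $L^2$) with an $h$-independent constant. For this I would apply the local inverse inequality $\norm{w}{L^p(E)}\leq C h^{d/p-d/2}\norm{w}{L^2(E)}$ on each element, combine it with the $L^2$ approximation bound above, and carefully track powers of $h$: at the Sobolev endpoint $p=p_0$ one has $h^{d/p_0-d/2}=h^{-1}$, and the $h^{1/2}$ gained from the $L^2$ remainder bound together with the $h^{1/2}$ hidden in the penalty weight $\sigma/h$ cancel this factor exactly, which is what produces a constant independent of $h$.

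The main obstacle is the mismatch between the $\ell^2$-structure of the jump penalty and the $\ell^{p_0}$-structure of $\norm{\cdot}{L^{p_0}(\Omega)}=\big(\sum_{E\in\setE_h}\norm{\cdot}{L^{p_0}(E)}^{p_0}\big)^{1/p_0}$. I would resolve it using the elementary sequence embedding $\ell^2\hookrightarrow\ell^{p_0}$, valid because $p_0\geq 2$ in every admissible dimension, so that assembling the per-element bounds in $\ell^{p_0}$ is dominated by assembling them in $\ell^2$; the latter reassembles into $\norm{\chi}{\mathrm{DG}}$. This very step explains both the restriction $p\leq p_0$ (needed for the continuous embedding and for the inverse-exponent cancellation) and the exclusion of $p=\infty$ in two dimensions, where $H^1(\Omega)$ does not embed into $L^\infty(\Omega)$. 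Finally I would assemble the two contributions by the triangle inequality applied to $\chi-\overline{\chi}=(\mathcal{A}_h\chi-\overline{\mathcal{A}_h\chi})+\big((\chi-\mathcal{A}_h\chi)-\overline{(\chi-\mathcal{A}_h\chi)}\big)$, noting that the remainder's own mean contributes an $L^p$ term itself bounded by $\norm{\chi-\mathcal{A}_h\chi}{L^p(\Omega)}$ via H\"older's inequality on the bounded domain; for $p<p_0$ the result then follows from the $p_0$ case and the continuous inclusion $L^{p_0}(\Omega)\hookrightarrow L^p(\Omega)$. The vector statement is identical except that the boundary faces are penalized in $\norm{\cdot}{\mathrm{DG}}$, so $\mathcal{A}_h\vec{\theta}\in H_0^1(\Omega)^d$ and the Friedrichs form of the Sobolev inequality applies directly without subtracting any mean.
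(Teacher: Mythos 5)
The paper itself does not prove this lemma: it is quoted verbatim from the reference \cite{girault2017strong}, so there is no internal proof to compare against, and your proposal must stand on its own. It does. You have reconstructed the standard proof of broken Sobolev--Poincar\'e inequalities (the Karakashian--Pascal/Brenner/Buffa--Ortner technique, which is also the kind of argument behind the cited reference): the splitting $\chi = \mathcal{A}_h\chi + (\chi-\mathcal{A}_h\chi)$ into a conforming part and a jump-controlled remainder, the two Oswald approximation bounds, the continuous embedding $H^1(\Omega)\hookrightarrow L^{p}(\Omega)$ applied to $\mathcal{A}_h\chi$, the elementwise inverse inequality assembled through $\ell^2\hookrightarrow\ell^{p}$ (valid since $p\geq 2$ in the relevant range), and the exact cancellation of mesh powers at $p=p_0$ in $d=3$ (the $h^{-1}$ from the inverse estimate against $h^{1/2}\cdot h^{1/2}$ from the jump bound and the penalty weight) are all correct, as are the handling of the means and the $H_0^1$/Friedrichs variant for $\mathbf{X}_h$. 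Two points deserve attention. First, in $d=2$ your reduction ``prove the endpoint $p_0$, then use $L^{p_0}(\Omega)\hookrightarrow L^p(\Omega)$'' is vacuous, because $p_0=\infty$ is precisely the excluded case; instead you run the same computation directly at each finite $p$, where the inverse exponent $h^{2/p-1}$ combined with the $h$ gained from the remainder bound leaves $h^{2/p}$, which is bounded, so the conclusion still holds. Second, you state the Oswald estimates with the global mesh size $h$, while the sharp local versions carry facewise diameters $h_e$; passing from $\sum_{e\in\Gammah} h_e^{-1}\norm{\jump{\chi}}{L^2(e)}^2$ to the paper's penalty $\frac{1}{h}\sum_{e\in\Gammah}\norm{\jump{\chi}}{L^2(e)}^2$ requires quasi-uniformity (or equivalently facewise penalty weights). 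Since the paper's own DG norm already uses a single global $h$, this assumption is implicit throughout the paper and is not a defect of your argument relative to it, but it should be stated if the proof is to be made fully rigorous for merely shape-regular mesh families.
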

Many of the DG forms above satisfy important properties for the analysis of our scheme. Below,  we recall several well-known results and provide a briefly proof for the boundedness of $a_{\mathcal{A}}$. We omit the other proofs for the sake of brevity -- for details see \cite{ChaabaneGiraultPuelzRiviere2017,girault2005splitting,riviere2008}.
\begin{lemma}[Boundedness of $a_{\mathcal{A}}$]\label{lem:CHNS:boundedness_aA}
There exists a constant $C_\gamma>0$ independent of mesh size $h$ such that for all $c,\chi$ in $S_h$ and $\vec{v}$ in $\mathbf{X}_h$, the following bounds hold:
\begin{subequations}
\begin{align}
\abs{a_{\mathcal{A}}(c,\vec{v},\chi)} \leq& 
C_\gamma \Big(\norm{c}{\mathrm{DG}} + \abs{\int_{\Omega}c}\Big)\norm{\vec{v}}{\mathrm{DG}}\norm{\chi}{\mathrm{DG}},\label{eq:CHNS:boundedness_aA_1}\\
\abs{a_{\mathcal{A}}(c,\vec{v},\chi)} \leq& 
C_\gamma \Big(\norm{c}{\mathrm{DG}} + \abs{\int_{\Omega}c}\Big)\norm{\vec{v}}{L^2(\Omega)}^{1/2}\norm{\vec{v}}{\mathrm{DG}}^{1/2}\norm{\chi}{\mathrm{DG}}.\label{eq:CHNS:boundedness_aA_2}
\end{align}
\end{subequations}
In particular, for all $c$ in $M_h$, $\chi$ in $S_h$, and $\vec{v}$ in $\mathbf{X}_h$, the first inequality above implies $\abs{a_{\mathcal{A}}(c,\vec{v},\chi)} \leq C_\gamma\norm{c}{\mathrm{DG}}\norm{\vec{v}}{\mathrm{DG}}\norm{\chi}{\mathrm{DG}}$\,.
\end{lemma}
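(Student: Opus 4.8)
The plan is to split the definition \eqref{eq:CHNS:DG_advection} of $a_{\mathcal{A}}$ into its two contributions, $a_{\mathcal{A}}(c,\vec{v},\chi) = T_1 + T_2$, where $T_1 = -\sum_{E\in\setE_h}\int_E c\,\vec{v}\cdot\grad{\chi}$ is the elementwise volume term and $T_2 = \sum_{e\in\Gammah}\int_e \avg{c}\avg{\vec{v}\cdot\normal_e}\jump{\chi}$ is the face term, and to bound each separately. In both terms the factor carrying $\chi$ is either $\grad{\chi}$ or $\jump{\chi}$, each of which is controlled directly by $\norm{\chi}{\mathrm{DG}}$ through the definition of the DG seminorm; so the real work is to extract the factor $\big(\norm{c}{\mathrm{DG}}+\abs{\int_\Omega c}\big)$ from $c$ and the factor $\norm{\vec{v}}{\mathrm{DG}}$ (or $\norm{\vec{v}}{L^2(\Omega)}^{1/2}\norm{\vec{v}}{\mathrm{DG}}^{1/2}$) from $\vec{v}$, via a generalized H\"older inequality followed by the Poincar\'e inequality of the previous lemma.

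For $T_1$ I would first use Cauchy--Schwarz to write $\abs{T_1}\le \norm{c\vec{v}}{L^2(\Omega)}\,\norm{\grad{\chi}}{L^2(\Omega)}\le \norm{c\vec{v}}{L^2(\Omega)}\,\norm{\chi}{\mathrm{DG}}$, reducing matters to estimating $\norm{c\vec{v}}{L^2(\Omega)}$ by a global H\"older inequality. For \eqref{eq:CHNS:boundedness_aA_1} I split $\frac12=\frac{1}{p_0}+\frac1d$ and bound $\norm{c\vec{v}}{L^2(\Omega)}\le \norm{c}{L^{p_0}(\Omega)}\norm{\vec{v}}{L^{d}(\Omega)}$ (when $d=3$; when $d=2$ the exponent $p_0$ is excluded, so I instead use any finite admissible pair such as $(4,4)$). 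Since $d\le p_0$, Poincar\'e gives $\norm{\vec{v}}{L^d(\Omega)}\le C_P\norm{\vec{v}}{\mathrm{DG}}$, and applied to $c$ after subtracting its mean it gives $\norm{c}{L^{p_0}(\Omega)}\le C\big(\norm{c}{\mathrm{DG}}+\abs{\int_\Omega c}\big)$; together these produce \eqref{eq:CHNS:boundedness_aA_1}. For \eqref{eq:CHNS:boundedness_aA_2} I instead split $\frac12=\frac{1}{2d}+\frac{d-1}{2d}$, bound $\norm{c\vec{v}}{L^2(\Omega)}\le \norm{c}{L^{2d}(\Omega)}\norm{\vec{v}}{L^{2d/(d-1)}(\Omega)}$, and replace the Poincar\'e bound on $\vec{v}$ by the discrete Gagliardo--Nirenberg (Ladyzhenskaya) inequality $\norm{\vec{v}}{L^{2d/(d-1)}(\Omega)}\le C\norm{\vec{v}}{L^2(\Omega)}^{1/2}\norm{\vec{v}}{\mathrm{DG}}^{1/2}$, valid for $d=2,3$ on broken polynomial spaces (in $d=3$ it also follows from $L^p$-interpolation between $L^2$ and $L^{p_0}=L^6$ with exponent $\tfrac12$, then Poincar\'e). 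One checks $2d\le p_0$ and $2d/(d-1)\le p_0$ in both dimensions, so all embeddings apply.

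The main obstacle is the face term $T_2$, where the constants must come out independent of $h$. I would first use Cauchy--Schwarz over the skeleton to peel off $\chi$, $\abs{T_2}\le \big(\sum_{e\in\Gammah}\norm{\avg{c}\avg{\vec{v}\cdot\normal_e}}{L^2(e)}^2\big)^{1/2}\big(\sum_{e\in\Gammah}\norm{\jump{\chi}}{L^2(e)}^2\big)^{1/2}$, and absorb the last factor as $\big(\sum_{e\in\Gammah}\norm{\jump{\chi}}{L^2(e)}^2\big)^{1/2}\le (h/\sigma)^{1/2}\norm{\chi}{\mathrm{DG}}$ using the penalty part of the DG norm. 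For the remaining skeleton sum I would pass from face norms to element norms by discrete trace inequalities, which on polynomials scale like $\norm{w}{L^q(e)}\le C h^{-1/q}\norm{w}{L^q(E)}$, choosing on each face the same H\"older exponents $(p_1,p_2)$ as in the volume estimate (so $\frac{1}{p_1}+\frac{1}{p_2}=\frac12$). The crucial bookkeeping is that the two trace factors contribute $h^{-1/p_1-1/p_2}=h^{-1/2}$, which is exactly cancelled by the $(h/\sigma)^{1/2}$ produced above, leaving a constant depending only on $\sigma$ and the mesh regularity but not on $h$. A discrete H\"older inequality over the skeleton together with the relevant discrete Sobolev/trace embeddings then reproduces the same three factors as in the volume estimate.

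Adding the bounds for $T_1$ and $T_2$ and taking $C_\gamma$ to be the larger of the two resulting constants yields \eqref{eq:CHNS:boundedness_aA_1} and \eqref{eq:CHNS:boundedness_aA_2}. Finally, the ``in particular'' statement is immediate: if $c\in M_h = S_h\cap L_0^2(\Omega)$ then $\int_\Omega c=0$, so $\abs{\int_\Omega c}=0$ and the first inequality collapses to $\abs{a_{\mathcal{A}}(c,\vec{v},\chi)}\le C_\gamma\norm{c}{\mathrm{DG}}\norm{\vec{v}}{\mathrm{DG}}\norm{\chi}{\mathrm{DG}}$.
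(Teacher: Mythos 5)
Your proof is correct, and its skeleton is the same as the paper's: split $a_{\mathcal{A}}$ into the volume and face contributions, control the $\chi$ factor through the broken gradient and the penalty part of $\norm{\cdot}{\mathrm{DG}}$, use discrete trace inequalities whose combined scaling $h^{-1/p_1-1/p_2}=h^{-1/2}$ exactly cancels the $(h/\sigma)^{1/2}$ released by the penalty term, and finish with H\"older, Poincar\'e, and (for the second bound) interpolation; the ``in particular'' statement is handled identically. The paper merely performs the three-factor H\"older estimate on the face sum directly (exponents $4,4,2$) before invoking trace inequalities, whereas you first apply Cauchy--Schwarz to peel off $\jump{\chi}$ and then H\"older -- a harmless reordering. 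The one genuine divergence is the second inequality in dimension $d=2$: the paper uses the split $\norm{c}{L^6(\Omega)}\norm{\vec{v}}{L^3(\Omega)}$, which is admissible in both dimensions since every finite exponent is allowed by the Poincar\'e lemma in 2D, and then needs only the elementary interpolation $\norm{\vec{v}}{L^3(\Omega)}^2 \leq \norm{\vec{v}}{L^2(\Omega)}\norm{\vec{v}}{L^6(\Omega)}$, giving a proof uniform in $d=2,3$. Your $(4,4)$ split in 2D instead forces you to invoke a discrete Ladyzhenskaya (Gagliardo--Nirenberg) inequality on broken polynomial spaces, and this cannot be replaced by plain $L^p$-interpolation: getting the $\norm{\vec{v}}{L^2(\Omega)}^{1/2}\norm{\vec{v}}{\mathrm{DG}}^{1/2}$ factor out of $L^4$ by interpolation would require the endpoint $L^\infty$, precisely the exponent excluded in 2D. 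The discrete Ladyzhenskaya inequality for broken spaces is a true and citable result, so your argument stands, but it imports a non-elementary external tool that the paper avoids; adopting the $(6,3)$ split would make your proof self-contained and dimension-uniform.
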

\begin{proof}
The inequalities above are direct results of suitably bounding \cref{eq:CHNS:DG_advection} term after term. Using H\"older's inequality and Cauchy--Schwarz's inequality we have
\begin{align*}
\abs{\sum_{E\in\setE_h} \int_E c\,\vec{v}\cdot\grad{\chi}} 
\leq \Big(\sum_{E\in\setE_h} \norm{c}{L^4(E)}^4\Big)^{\frac{1}{4}}\Big(\sum_{E\in\setE_h} \norm{\vec{v}}{L^4(E)}^4\Big)^{\frac{1}{4}}\Big(\sum_{E\in\setE_h} \norm{\grad{\chi}}{L^2(E)}^2\Big)^{\frac{1}{2}}.
\end{align*}
Again, using H\"older's inequality, Cauchy--Schwarz's inequality, triangular inequality and trace inequality we get
\begin{align*}
&\abs{\sum_{e\in\Gammah} \int_e \avg{c} \avg{\vec{v}\cdot\normal_e} \jump{\chi}}\\
\leq& \Big(\sum_{e\in\Gammah} \norm{\avg{c}}{L^4(e)}^4\Big)^\frac{1}{4} \Big(\sum_{e\in\Gammah}\norm{\avg{\vec{v}\cdot\normal_e}}{L^4(e)}\Big)^\frac{1}{4} \Big(\sum_{e\in\Gammah} \norm{\jump{\chi}}{L^2(e)}^2\Big)^\frac{1}{2}\\
\leq& C\Big(\sum_{E\in\setE_h} \norm{c}{L^4(E)}^4\Big)^\frac{1}{4} \Big(\sum_{E\in\setE_h}\norm{\vec{v}}{L^4(E)}\Big)^\frac{1}{4} \Big(\frac{1}{h}\sum_{e\in\Gammah} \norm{\jump{\chi}}{L^2(e)}^2\Big)^\frac{1}{2}.
\end{align*}
Thus, combine these bounds, by definition of $a_{\mathcal{A}}$ and Poincar\'e's inequality we obtain \cref{eq:CHNS:boundedness_aA_1}. For the inequality \cref{eq:CHNS:boundedness_aA_2}, using similar arguments as above, we have
\begin{equation*}
\abs{a_{\mathcal{A}}(c,\vec{v},\chi)} \leq C \norm{c}{L^6(\Omega)}\norm{\vec{v}}{L^3(\Omega)}\norm{\chi}{\mathrm{DG}}.
\end{equation*}
Finally, we conclude our proof by applying Poincar\'e's inequality and interpolation inequality $\norm{\vec{v}}{L^3(\Omega)}^2 \leq \norm{\vec{v}}{L^2(\Omega)}\norm{\vec{v}}{L^6(\Omega)}$.
\end{proof}
\begin{lemma}[Continuity of $a_{\mathcal{C}}$]\label{lem:CHNS:continuityaC}
The form $a_\mathcal{C}$ is linear with respect to its second to fourth arguments and there exists a constant $C_\nu>0$ independent of mesh size $h$ such that for all $\vec{u}_h, \vec{v}_h, \vec{w}_h, \vec{z}_h$ in $\mathbf{X}_h + (H_0^1(\Omega))^d$,
\[
\abs{a_{\mathcal{C}}(\vec{z}_h,\vec{u}_h,\vec{v}_h,\vec{w}_h)} 
\leq C_\nu\norm{\vec{u}_h}{\mathrm{DG}}\norm{\vec{v}_h}{\mathrm{DG}}\norm{\vec{w}_h}{\mathrm{DG}}.
\]
\end{lemma}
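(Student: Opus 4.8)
The plan is to read off linearity first and then reduce the boundedness claim to a term-by-term estimate. With the first argument $\vec{z}_h$ fixed, the upwind sets $\partial E_{-}^{\vec{z}_h}$ are determined, and the integrand of every term in \eqref{eq:CHNS:DG_convection} is linear in each of the remaining three slots; hence linearity in the second through fourth arguments is immediate. Moreover $\vec{z}_h$ enters only through the domains of integration, which is exactly why it does not appear on the right-hand side of the inequality: the bound will be uniform over the choice of $\vec{z}_h$. The remaining work is to bound the four groups of terms in \eqref{eq:CHNS:DG_convection} separately and add the pieces.

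For the two volume contributions $\sum_E\int_E(\vec{u}_h\cdot\grad{\vec{v}_h})\cdot\vec{w}_h$ and $\frac12\sum_E\int_E(\div{\vec{u}_h})\,\vec{v}_h\cdot\vec{w}_h$, the approach is element-wise H\"older with exponents $(4,2,4)$, bounding $\abs{\div{\vec{u}_h}}\le\sqrt{d}\,\abs{\grad{\vec{u}_h}}$ in the second case, followed by a discrete H\"older inequality over the elements. For the first term this yields the product $\big(\sum_E\norm{\vec{u}_h}{L^4(E)}^4\big)^{1/4}\big(\sum_E\norm{\grad{\vec{v}_h}}{L^2(E)}^2\big)^{1/2}\big(\sum_E\norm{\vec{w}_h}{L^4(E)}^4\big)^{1/4}$, and similarly for the divergence term. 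I then convert the $L^4$ factors to DG norms via the broken Sobolev embedding of the Poincar\'e lemma — admissible since $4\le p_0$ for $d\in\{2,3\}$ — while the middle factor is controlled directly by $\norm{\vec{v}_h}{\mathrm{DG}}$. These two terms thus give the desired trilinear bound cleanly.

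The harder part, and the main obstacle, is the two face contributions $\int_{\partial E_{-}^{\vec{z}_h}}\abs{\avg{\vec{u}_h}\cdot\normal_E}\,(\vec{v}_h^{\mathrm{int}}-\vec{v}_h^{\mathrm{ext}})\cdot\vec{w}_h^{\mathrm{int}}$ and $\frac12\sum_e\int_e\jump{\vec{u}_h\cdot\normal_e}\avg{\vec{v}_h\cdot\vec{w}_h}$. On each face I would again apply H\"older with exponents $(4,2,4)$, keeping the jump factors (namely $\vec{v}_h^{\mathrm{int}}-\vec{v}_h^{\mathrm{ext}}=\jump{\vec{v}_h}$ on interior faces, and $\jump{\vec{u}_h\cdot\normal_e}$ respectively) in the $L^2(e)$ slot. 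The decisive step is the bookkeeping of powers of $h$: the two $L^4(e)$ face factors are passed to element $L^4$ norms by an inverse trace inequality on the polynomial space, each costing $h^{-1/4}$, hence $h^{-1/2}$ together; this is compensated exactly by writing $\norm{\jump{\cdot}}{L^2(e)}=(h/\sigma)^{1/2}\,(\sigma/h)^{1/2}\norm{\jump{\cdot}}{L^2(e)}$ and absorbing $(\sigma/h)^{1/2}\norm{\jump{\cdot}}{L^2(e)}$ into the DG norm. A discrete H\"older over the faces followed by the broken Sobolev embedding then closes the estimate with an $h$-independent constant.

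Two care points deserve attention. First, since the form is tested on the sum space $\mathbf{X}_h+(H_0^1(\Omega))^d$, the inverse trace inequality (valid only for polynomials) does not apply to the $H_0^1$ part; there, however, the jumps vanish, so I would split each argument into its polynomial and $H_0^1$ components and use the inverse trace estimate on the former and the multiplicative trace inequality $\norm{\phi}{L^2(e)}^2\le C\big(h^{-1}\norm{\phi}{L^2(E)}^2+\norm{\phi}{L^2(E)}\norm{\grad{\phi}}{L^2(E)}\big)$ on the latter. Second, the expansion of $\avg{\vec{v}_h\cdot\vec{w}_h}$ produces products of traces from both adjacent elements, which I would separate by the triangle inequality before applying H\"older face by face. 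Summing the four bounds produces the constant $C_\nu$ in the stated inequality.
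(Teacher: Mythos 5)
The paper itself does not prove this lemma: it is one of the properties the authors dispatch with ``we omit the other proofs for the sake of brevity'' and a citation to \cite{ChaabaneGiraultPuelzRiviere2017,girault2005splitting,riviere2008}, so your proposal has to be measured against the standard arguments in those references. For purely discrete arguments (all four functions in $\mathbf{X}_h$) your estimate \emph{is} that standard argument, and your $h$-bookkeeping is exactly right: H\"older with exponents $(4,2,4)$ on elements and faces, an inverse trace inequality costing $h^{-1/4}$ on each $L^4(e)$ factor, compensation by writing $\norm{\jump{\cdot}}{L^2(e)}=(h/\sigma)^{1/2}(\sigma/h)^{1/2}\norm{\jump{\cdot}}{L^2(e)}$, and the broken Sobolev embedding at the end. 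But one claim and one step do not survive scrutiny. The linearity assertion is the first problem: the upwind integrand in \eqref{eq:CHNS:DG_convection} is $\abs{\avg{\vec{v}}\cdot\normal_E}\,(\vec{z}^{\mathrm{int}}-\vec{z}^{\mathrm{ext}})\cdot\vec{\theta}^{\mathrm{int}}$, so the second argument enters through an absolute value. Fixing the upwind sets by freezing the first argument does not remove this, and your statement that ``the integrand of every term is linear in each of the remaining three slots'' is false: the form is linear in its third and fourth arguments only, while in the second argument one has at best positive homogeneity and the Lipschitz-type bound inherited from $\abs{\abs{a}-\abs{b}}\leq\abs{a-b}$. (The defect is present in the paper's own statement of the lemma, but a proof cannot make it true by assertion.)

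The second, more serious gap is the treatment of the $H_0^1(\Omega)^d$ components in the face terms. You correctly observe that the jumps of those components vanish and that inverse trace inequalities are unavailable for them; however, the $H_0^1$ parts then sit in the average/trace factors, which in your H\"older arrangement occupy the $L^4(e)$ slots, whereas the multiplicative trace inequality you propose to use is an $L^2(e)$ estimate and cannot be applied to those factors. What is actually needed is the scaled trace bound for $H^1(E)$ functions,
\begin{equation*}
\norm{\phi}{L^4(e)} \leq C\Big(h^{-(d+1)/4}\norm{\phi}{L^2(E)} + h^{(3-d)/4}\norm{\grad{\phi}}{L^2(E)}\Big),
\end{equation*}
whose leading term is \emph{worse} than the polynomial loss $h^{-1/4}$; only after the additional local H\"older step $\norm{\phi}{L^2(E)}\leq \abs{E}^{1/4}\norm{\phi}{L^4(E)}\leq Ch^{d/4}\norm{\phi}{L^4(E)}$ does one recover the same $h^{-1/4}$ and close your balance $h^{1/2}\,h^{-1/4}\,h^{-1/4}=1$. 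This conversion is the missing idea, not a routine detail. Moreover, the splitting strategy itself introduces a difficulty you do not address: writing $\vec{u}_h=\vec{u}_p+\vec{u}_c$ produces a final bound in terms of $\norm{\vec{u}_p}{\mathrm{DG}}+\norm{\vec{u}_c}{\mathrm{DG}}$, and nothing guarantees that this quantity is controlled by $\norm{\vec{u}_h}{\mathrm{DG}}$ uniformly in $h$ for an arbitrary function of the sum space. The clean repair is to forgo the splitting entirely: the displayed trace bound holds simultaneously for polynomials and for $H^1(E)$ functions, so it can be applied directly to the broken-$H^1$ functions themselves, after which your face-by-face H\"older argument and the broken Sobolev embedding go through verbatim.
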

\begin{lemma}[Bounds of $a_{\mathcal{C}}$]\label{lem:CHNS:boundconvection}
There exists a constant $C$ independent of $h$ such that for any $\vec{u}$ in $(L^\infty(\Omega)\cap W^{1,3}(\Omega))^d$,
any $\vec{v}_h$ in $\mathbf{V}_h$ and any $\vec{w}_h, \vec{z}_h$ in $\vec{X}_h$, the bound holds
\[
|a_\mathcal{C}(\vec{z}_h,\vec{v}_h,\vec{u},\vec{w}_h)| \leq C \left( \Vert \vec{u}\Vert_{L^\infty(\Omega)}
+ \vert \vec{u}\vert_{W^{1,3}(\Omega)}\right) \Vert \vec{v}_h\Vert_{L^2(\Omega)} \Vert \vec{w}_h\Vert_{\mathrm{DG}}.
\]
\end{lemma}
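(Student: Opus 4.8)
The plan is to expand $a_{\mathcal{C}}(\vec{z}_h,\vec{v}_h,\vec{u},\vec{w}_h)$ term by term from its definition~\eqref{eq:CHNS:DG_convection}, giving the advection volume term $\sum_{E\in\setE_h}\int_E(\vec{v}_h\cdot\grad\vec{u})\cdot\vec{w}_h$, the upwind jump term over $\partial E_{-}^{\vec{z}_h}$, the divergence volume term $\frac{1}{2}\sum_{E\in\setE_h}\int_E(\div\vec{v}_h)\,\vec{u}\cdot\vec{w}_h$, and the interface jump term $-\frac{1}{2}\sum_{e\in\Gammah\cup\partial\Omega}\int_e\jump{\vec{v}_h\cdot\normal_e}\avg{\vec{u}\cdot\vec{w}_h}$, and to estimate each piece separately. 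The guiding principle is to let every derivative fall on the smooth field $\vec{u}$ or on the test function $\vec{w}_h$, and, wherever a derivative is unavoidably placed on $\vec{v}_h$, to remove it by an inverse inequality whose negative power of $h$ is absorbed by a compensating positive power coming either from the boundary jump contribution to $\norm{\vec{w}_h}{\mathrm{DG}}$ or from a projection error. The decisive structural simplification is that $\vec{u}\in W^{1,3}(\Omega)$ is globally Sobolev, hence single valued on every interior face; therefore $\vec{u}^{\mathrm{int}}-\vec{u}^{\mathrm{ext}}=0$ there and the upwind term survives only on $\partial\Omega$, where by convention the exterior trace vanishes.

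For the advection volume term I would apply the elementwise H\"older inequality with exponents $2,3,6$, namely $\int_E(\vec{v}_h\cdot\grad\vec{u})\cdot\vec{w}_h\leq\norm{\vec{v}_h}{L^2(E)}\norm{\grad\vec{u}}{L^3(E)}\norm{\vec{w}_h}{L^6(E)}$, sum over $E$ by a discrete H\"older inequality with the same exponents to obtain $\norm{\vec{v}_h}{L^2(\Omega)}\,\abs{\vec{u}}_{W^{1,3}(\Omega)}\,\norm{\vec{w}_h}{L^6(\Omega)}$, and close with Poincar\'e's inequality $\norm{\vec{w}_h}{L^6(\Omega)}\leq C_P\norm{\vec{w}_h}{\mathrm{DG}}$, which is legitimate since $p_0\geq6$ for $d\leq3$. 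The residual upwind term, now supported on $\partial\Omega$, is bounded by $\norm{\vec{u}}{L^\infty(\Omega)}\sum_{e\subset\partial\Omega}\int_e\abs{\vec{v}_h\cdot\normal_e}\,\abs{\vec{w}_h}$; here I would use the discrete trace inequality $\norm{\vec{v}_h}{L^2(e)}\leq Ch^{-1/2}\norm{\vec{v}_h}{L^2(E_{k^-})}$ on the $\vec{v}_h$ factor and $\norm{\vec{w}_h}{L^2(e)}=\norm{\jump{\vec{w}_h}}{L^2(e)}\leq(h/\sigma)^{1/2}\norm{\vec{w}_h}{\mathrm{DG}}$ on the $\vec{w}_h$ factor, so that the two half powers of $h$ cancel and the term is controlled by $C\norm{\vec{u}}{L^\infty(\Omega)}\norm{\vec{v}_h}{L^2(\Omega)}\norm{\vec{w}_h}{\mathrm{DG}}$.

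The crux is the pair of divergence terms, because $\div\vec{v}_h$ carries a derivative of $\vec{v}_h$ whereas the statement permits only $\norm{\vec{v}_h}{L^2(\Omega)}$. I would first observe that these two terms together equal $-\frac{1}{2} b_{\mathcal{P}}(\vec{u}\cdot\vec{w}_h,\vec{v}_h)$. Let $\pi_h$ denote the elementwise $L^2$-projection onto the broken space $\{\phi\in L^2(\Omega):\on{\phi}{E}\in\IP_{q-1}(E)\}$. Since $\vec{v}_h\in\mathbf{V}_h$ gives $b_{\mathcal{P}}(\phi,\vec{v}_h)=0$ for all $\phi\in Q_h$, and since an elementwise integration by parts gives $b_{\mathcal{P}}(1,\vec{v}_h)=0$, the form $b_{\mathcal{P}}(\cdot,\vec{v}_h)$ vanishes on the whole of that broken space; hence $-\frac{1}{2} b_{\mathcal{P}}(\vec{u}\cdot\vec{w}_h,\vec{v}_h)=-\frac{1}{2} b_{\mathcal{P}}(\eta,\vec{v}_h)$ with $\eta=\vec{u}\cdot\vec{w}_h-\pi_h(\vec{u}\cdot\vec{w}_h)$. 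Expanding $b_{\mathcal{P}}(\eta,\vec{v}_h)=-\sum_{E\in\setE_h}\int_E\eta\,\div\vec{v}_h+\sum_{e\in\Gammah\cup\partial\Omega}\int_e\avg{\eta}\jump{\vec{v}_h\cdot\normal_e}$, I would bound the volume piece with the approximation bound $\norm{\eta}{L^2(E)}\leq Ch\norm{\vec{u}\cdot\vec{w}_h}{H^1(E)}$ and the inverse inequality $\norm{\div\vec{v}_h}{L^2(E)}\leq Ch^{-1}\norm{\vec{v}_h}{L^2(E)}$, and the face piece with the trace bound $\norm{\avg{\eta}}{L^2(e)}\leq Ch^{1/2}\norm{\vec{u}\cdot\vec{w}_h}{H^1(E_{k^-}\cup E_{k^+})}$ and $\norm{\jump{\vec{v}_h\cdot\normal_e}}{L^2(e)}\leq Ch^{-1/2}\norm{\vec{v}_h}{L^2(E_{k^-}\cup E_{k^+})}$; in both the powers of $h$ cancel exactly. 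A Cauchy--Schwarz over elements then leaves $\big(\sum_{E\in\setE_h}\norm{\vec{u}\cdot\vec{w}_h}{H^1(E)}^2\big)^{1/2}\norm{\vec{v}_h}{L^2(\Omega)}$, and estimating $\norm{\grad(\vec{u}\cdot\vec{w}_h)}{L^2(E)}\leq\norm{\grad\vec{u}}{L^3(E)}\norm{\vec{w}_h}{L^6(E)}+\norm{\vec{u}}{L^\infty(E)}\norm{\grad\vec{w}_h}{L^2(E)}$ and summing by discrete H\"older and Poincar\'e exactly as before yields the desired factor $C(\norm{\vec{u}}{L^\infty(\Omega)}+\abs{\vec{u}}_{W^{1,3}(\Omega)})\norm{\vec{w}_h}{\mathrm{DG}}$.

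The main obstacle is precisely this divergence step. Without the discrete incompressibility of $\mathbf{V}_h$ the term $\frac{1}{2}\sum_{E\in\setE_h}\int_E(\div\vec{v}_h)\,\vec{u}\cdot\vec{w}_h$ could only be controlled through $\norm{\vec{v}_h}{\mathrm{DG}}$, which is too strong for the role this lemma plays in the error analysis; it is the identity $b_{\mathcal{P}}(\pi_h(\vec{u}\cdot\vec{w}_h),\vec{v}_h)=0$ that replaces $\vec{u}\cdot\vec{w}_h$ by a projection error and thereby supplies the extra factor of $h$ needed to neutralise the inverse inequalities and recover a clean $\norm{\vec{v}_h}{L^2(\Omega)}$ dependence. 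Getting the powers of $h$ to balance simultaneously in the volume and the face contributions is the delicate bookkeeping at the heart of the argument. Throughout I rely on the standard inverse and discrete trace inequalities and on the approximation properties of $\pi_h$ on the regular, quasi-uniform mesh, all with constants independent of $h$.
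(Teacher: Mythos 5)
Your proof is correct. The paper itself does not prove \cref{lem:CHNS:boundconvection} but defers to \cite{ChaabaneGiraultPuelzRiviere2017,girault2005splitting,riviere2008}, and your argument—splitting $a_\mathcal{C}$ into its four pieces, observing that the interior upwind jumps of $\vec{u}$ vanish since $\vec{u}\in W^{1,3}(\Omega)^d$, and exploiting $\vec{v}_h\in\mathbf{V}_h$ together with $b_{\mathcal{P}}(1,\vec{v}_h)=0$ to replace $\vec{u}\cdot\vec{w}_h$ by its elementwise $L^2$-projection error so that inverse and trace inequalities balance in $h$—is essentially the standard proof given in those references.
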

\begin{lemma}[Positivity of $a_{\mathcal{C}}$]\label{lem:CHNS:positivity_convection}
The form $a_\mathcal{C}$ satisfies the positivity property, i.\,e., for all $\vec{v},\vec{z}$ in $\mathbf{X}_h$\,,
\begin{equation*}
a_\mathcal{C}(\vec{v},\vec{v},\vec{z},\vec{z}) 
= \frac{1}{2} \sum_{E\in\setE_h}\int_{\partial E_{-}^\vec{v}} \abs{\avg{\vec{v}}\cdot\normal_E}\,\Lnorm{\vec{z}^\mathrm{ext}-\vec{z}^\mathrm{int}}^2 
\geq 0.
\end{equation*}
\end{lemma}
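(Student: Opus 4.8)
The goal is to verify the claimed identity and then read off nonnegativity term by term. The plan is to substitute $\vec{w}=\vec{v}$, second argument $\vec{v}$, and third and fourth arguments both equal to $\vec{z}$ into the definition~\eqref{eq:CHNS:DG_convection}, and to simplify the element-volume contributions first. I would group the advection integral with the divergence integral and use the pointwise identity $(\vec{v}\cdot\grad{\vec{z}})\cdot\vec{z} = \frac{1}{2}\vec{v}\cdot\grad{\abs{\vec{z}}^2}$ together with the product rule $\div{(\abs{\vec{z}}^2\vec{v})} = \abs{\vec{z}}^2\div{\vec{v}} + \vec{v}\cdot\grad{\abs{\vec{z}}^2}$. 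Applying the divergence theorem on each $E$, the interior $\div{\vec{v}}$ terms cancel exactly against the explicit $\frac{1}{2}(\div{\vec{v}})\,\vec{z}\cdot\vec{z}$ contribution, so that all volume terms collapse to the single element-boundary sum $\frac{1}{2}\sum_{E\in\setE_h}\int_{\partial E}\abs{\vec{z}}^2\,\vec{v}\cdot\normal_E$.

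Next I would reassemble this sum of element-boundary integrals into a sum over faces. Each interior face is visited twice with opposite outward normals, so applying the jump--average product rule $\jump{ab}=\avg{a}\jump{b}+\jump{a}\avg{b}$ with $a=\abs{\vec{z}}^2$ and $b=\vec{v}\cdot\normal_e$ rewrites the collapsed volume term as $\frac{1}{2}\sum_{e\in\Gammah}\int_e\big(\avg{\abs{\vec{z}}^2}\jump{\vec{v}\cdot\normal_e}+\jump{\abs{\vec{z}}^2}\avg{\vec{v}\cdot\normal_e}\big)$, up to boundary-face terms treated separately. I would then add the remaining face term $-\frac{1}{2}\sum_{e}\int_e\jump{\vec{v}\cdot\normal_e}\avg{\vec{z}\cdot\vec{z}}$ from the definition; since $\avg{\vec{z}\cdot\vec{z}}=\avg{\abs{\vec{z}}^2}$, the pieces proportional to $\avg{\abs{\vec{z}}^2}\jump{\vec{v}\cdot\normal_e}$ cancel, leaving precisely $\frac{1}{2}\sum_{e\in\Gammah}\int_e\jump{\abs{\vec{z}}^2}\avg{\vec{v}\cdot\normal_e}$.

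The decisive step pairs this surviving jump integral, face by face, with the upwind term $\sum_{E\in\setE_h}\int_{\partial E_{-}^{\vec{v}}}\abs{\avg{\vec{v}}\cdot\normal_E}(\vec{z}^\mathrm{int}-\vec{z}^\mathrm{ext})\cdot\vec{z}^\mathrm{int}$. For each interior face I would identify the unique adjacent element $E$ for which the face is an inflow face, i.e.\ $\avg{\vec{v}}\cdot\normal_E<0$, and write $\vec{z}^\mathrm{int},\vec{z}^\mathrm{ext}$ for the traces from $E$ and its neighbour. Tracking the signs dictated by whether $E=E_{k^-}$ or $E=E_{k^+}$, one checks that in both cases the jump contribution equals $-\frac{1}{2}\abs{\avg{\vec{v}}\cdot\normal_E}\,(\abs{\vec{z}^\mathrm{int}}^2-\abs{\vec{z}^\mathrm{ext}}^2)$, so adding the upwind contribution and \emph{completing the square} gives $\abs{\avg{\vec{v}}\cdot\normal_E}\big[(\vec{z}^\mathrm{int}-\vec{z}^\mathrm{ext})\cdot\vec{z}^\mathrm{int}-\frac{1}{2}(\abs{\vec{z}^\mathrm{int}}^2-\abs{\vec{z}^\mathrm{ext}}^2)\big]=\frac{1}{2}\abs{\avg{\vec{v}}\cdot\normal_E}\,\Lnorm{\vec{z}^\mathrm{ext}-\vec{z}^\mathrm{int}}^2$, which reproduces the claimed summand and is manifestly nonnegative. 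Summing over faces yields both the identity and the inequality.

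I expect the main difficulty to be organizational rather than analytical: keeping the orientation bookkeeping consistent between the global face normal $\normal_e$ and the element-outward normal $\normal_E$, and confirming that completing the square yields the same nonnegative square irrespective of which side of the face carries the inflow. Boundary faces require separate care using the convention that the exterior trace vanishes there; I would verify that the volume and face contributions combine consistently on $\partial\Omega$ so that, as on interior faces, only the nonnegative square $\frac{1}{2}\abs{\avg{\vec{v}}\cdot\normal_E}\,\Lnorm{\vec{z}^\mathrm{ext}-\vec{z}^\mathrm{int}}^2$ survives.
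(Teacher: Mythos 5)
Your route is the standard one (the paper itself does not prove this lemma; it defers to \cite{girault2005splitting,girault2005discontinuous,riviere2008}), and your interior-face work is correct and is the heart of the matter: collapsing the two volume integrals to $\frac{1}{2}\sum_{E}\int_{\partial E}\abs{\vec{z}}^2\,\vec{v}\cdot\normal_E$, cancelling the $\avg{\abs{\vec{z}}^2}\jump{\vec{v}\cdot\normal_e}$ pieces against the explicit face term of \eqref{eq:CHNS:DG_convection}, and completing the square against the upwind term does yield exactly $\frac{1}{2}\abs{\avg{\vec{v}}\cdot\normal_E}\,\Lnorm{\vec{z}^\mathrm{ext}-\vec{z}^\mathrm{int}}^2$ on each interior inflow face, whichever side is the inflow side.

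The gap is the boundary-face step, which you deferred (``I would verify\dots'') and whose outcome you asserted incorrectly. Under the paper's convention that jump and average coincide with the interior trace on $\partial\Omega$, the collapsed volume contribution $\frac{1}{2}\int_e\abs{\vec{z}}^2\,\vec{v}\cdot\normal_E$ and the face contribution $-\frac{1}{2}\int_e\jump{\vec{v}\cdot\normal_e}\avg{\vec{z}\cdot\vec{z}}=-\frac{1}{2}\int_e(\vec{v}\cdot\normal_e)\abs{\vec{z}}^2$ cancel \emph{exactly} on each boundary face (there $\normal_e=\normal_E$), so nothing is left to ``halve'' the upwind term: on the boundary inflow portion the surviving quantity is the full $\abs{\avg{\vec{v}}\cdot\normal_E}\,\abs{\vec{z}^\mathrm{int}}^2$, i.e.\ the summand carries coefficient $1$, not $\frac{1}{2}$ (recall $\vec{z}^\mathrm{ext}=\vec{0}$ there). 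A one-element check makes this concrete: take $\vec{v}$ and $\vec{z}$ to be nonzero constants on a single element; then all volume terms vanish, the face term vanishes because $\int_{\partial E}\vec{v}\cdot\normal_E=0$, and $a_\mathcal{C}(\vec{v},\vec{v},\vec{z},\vec{z})=\int_{\partial E_{-}^{\vec{v}}}\abs{\vec{v}\cdot\normal_E}\,\abs{\vec{z}}^2$, which is twice the right-hand side of the claimed identity. Consequently your argument does establish the inequality $a_\mathcal{C}(\vec{v},\vec{v},\vec{z},\vec{z})\geq 0$ (both surviving families of terms are nonnegative, and that inequality is all the paper ever uses), but the promised verification on $\partial\Omega$ would fail as stated: the exact identity holds only after adding the extra term $\frac{1}{2}\sum_{E}\int_{\partial E_{-}^{\vec{v}}\cap\partial\Omega}\abs{\avg{\vec{v}}\cdot\normal_E}\,\abs{\vec{z}^\mathrm{int}}^2$ to the right-hand side (equivalently, giving boundary inflow portions weight $1$). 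You should record the boundary bookkeeping correctly rather than ``verify'' the uniform factor $\frac{1}{2}$, which is not what the computation produces.
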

\begin{lemma}[Continuity of $a_{\mathcal{D}}$]
\!The bilinear form $a_{\mathcal{D}}$ is continuous on $S_h$ equipped with the energy norm, i.\,e., there exists a constant $C_\alpha>0$ independent of mesh size $h$ such that for all $c,\chi$ in $S_h$\,,~$\abs{a_{\mathcal{D}}(c,\chi)} \leq C_\alpha\norm{c}{\mathrm{DG}}\norm{\chi}{\mathrm{DG}}$\,.
\end{lemma}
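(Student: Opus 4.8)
The plan is to estimate the four constituent terms of $a_{\mathcal{D}}$ in \eqref{eq:CHNS:DG_diffusion} separately and then recombine. The two symmetric terms -- the volume gradient term and the interior penalty term -- follow directly from the Cauchy--Schwarz inequality, whereas the two consistency terms carrying the averaged normal derivative are the crux of the argument and require a discrete trace inequality together with careful bookkeeping of the powers of $h$.

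First I would treat the volume term $\sum_{E\in\setE_h}\int_E \grad c\cdot\grad\chi$ by Cauchy--Schwarz elementwise and then a discrete Cauchy--Schwarz over the elements, yielding the bound $\big(\sum_{E\in\setE_h}\norm{\grad c}{L^2(E)}^2\big)^{1/2}\big(\sum_{E\in\setE_h}\norm{\grad\chi}{L^2(E)}^2\big)^{1/2}\leq\norm{c}{\mathrm{DG}}\norm{\chi}{\mathrm{DG}}$. The penalty term $\frac{\sigma}{h}\sum_{e\in\Gammah}\int_e\jump{c}\jump{\chi}$ is bounded analogously, by Cauchy--Schwarz on each face followed by a discrete Cauchy--Schwarz over the faces, producing $\big(\frac{\sigma}{h}\sum_{e\in\Gammah}\norm{\jump{c}}{L^2(e)}^2\big)^{1/2}\big(\frac{\sigma}{h}\sum_{e\in\Gammah}\norm{\jump{\chi}}{L^2(e)}^2\big)^{1/2}$, which is again at most $\norm{c}{\mathrm{DG}}\norm{\chi}{\mathrm{DG}}$ since both factors are precisely the penalty portions of the $\mathrm{DG}$ norm.

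The hard part will be the consistency terms, for instance $\sum_{e\in\Gammah}\int_e\avg{\grad c\cdot\normal_e}\jump{\chi}$. Here I would apply Cauchy--Schwarz on each face while inserting compensating powers of $h$, bounding the per-face contribution by $\big(h^{1/2}\norm{\avg{\grad c\cdot\normal_e}}{L^2(e)}\big)\big(h^{-1/2}\norm{\jump{\chi}}{L^2(e)}\big)$, and then summing with a discrete Cauchy--Schwarz over the faces. The second factor assembles into $\big(\sigma^{-1}\frac{\sigma}{h}\sum_{e\in\Gammah}\norm{\jump{\chi}}{L^2(e)}^2\big)^{1/2}\leq\sigma^{-1/2}\norm{\chi}{\mathrm{DG}}$. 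The essential estimate is for the first factor: the averaged normal trace of $\grad c$ must be controlled by the interior gradient, for which I would invoke the discrete (inverse) trace inequality for polynomials, $\norm{\grad c\cdot\normal_e}{L^2(e)}^2\leq C h^{-1}\norm{\grad c}{L^2(E)}^2$ on each element $E$ adjacent to $e$. The factor $h^{-1}$ generated by the trace inequality is exactly cancelled by the $h^{1/2}$ weight inserted above, so that $h\sum_{e\in\Gammah}\norm{\avg{\grad c\cdot\normal_e}}{L^2(e)}^2\leq C\sum_{E\in\setE_h}\norm{\grad c}{L^2(E)}^2\leq C\norm{c}{\mathrm{DG}}^2$, where shape regularity bounds the number of faces per element. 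The second consistency term is handled identically with the roles of $c$ and $\chi$ exchanged.

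Collecting the four bounds gives $\abs{a_{\mathcal{D}}(c,\chi)}\leq C_\alpha\norm{c}{\mathrm{DG}}\norm{\chi}{\mathrm{DG}}$, with $C_\alpha$ depending only on the trace constant, the shape regularity of the mesh, and the polynomial degree $q$, hence independent of $h$. The one subtlety to watch is that the $h$-scaling introduced by the trace inequality be matched precisely by the $\sigma/h$ scaling built into the penalty part of the $\mathrm{DG}$ norm; this matching is exactly what renders the consistency terms uniformly bounded in $h$.
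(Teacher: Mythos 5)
Your proposal is correct and is precisely the standard argument: the paper itself omits the proof of this lemma, deferring to \cite{riviere2008,ChaabaneGiraultPuelzRiviere2017}, where the same term-by-term estimate is carried out --- Cauchy--Schwarz on the volume and penalty terms, and the discrete (inverse) trace inequality for piecewise polynomials with the $h^{1/2}$/$h^{-1/2}$ splitting on the two consistency terms, exactly as you describe, yielding a constant $C_\alpha$ depending only on the trace constant, shape regularity, the polynomial degree, and (harmlessly, through the factor $\sigma^{-1/2}$) the penalty parameter. The only implicit point worth noting is that matching the local trace constant $h_E^{-1}$ against the single global $h$ appearing in $a_{\mathcal{D}}$ and in $\norm{\cdot}{\mathrm{DG}}$ presumes the quasi-uniformity already built into the paper's formulation, which your appeal to mesh regularity covers in this setting.
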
 
\begin{lemma}[Coercivity of $a_{\mathcal{D}}$]
Assume that $\sigma$ is sufficiently large. Then, there exists a constant $K_\alpha>0$ independent of mesh size $h$ such that 
\[
a_{\mathcal{D}}(c,c) \geq K_\alpha\norm{c}{\mathrm{DG}}^2, \quad \forall c \in S_h.
\]
\end{lemma}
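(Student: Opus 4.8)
The plan is to test the bilinear form against $\chi = c$ and show that the only troublesome contribution — the symmetric flux term coupling the average of the normal gradient to the jump — can be absorbed into the gradient and penalty terms provided $\sigma$ is taken large enough. Setting $\chi = c$ in \eqref{eq:CHNS:DG_diffusion}, the two flux integrals coincide and one is left with
\begin{equation*}
a_{\mathcal{D}}(c,c) = \sum_{E\in\setE_h}\norm{\grad c}{L^2(E)}^2 - 2\sum_{e\in\Gammah}\int_e\avg{\grad c\cdot\normal_e}\jump{c} + \frac{\sigma}{h}\sum_{e\in\Gammah}\norm{\jump{c}}{L^2(e)}^2.
\end{equation*}
The first and third terms already assemble into $\norm{c}{\mathrm{DG}}^2$, so the entire argument reduces to controlling the middle term by a small multiple of the gradient norm plus a controllable multiple of the penalty sum.

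First I would estimate the flux term by Cauchy--Schwarz, applied on each face and then across the sum over faces, inserting a factor $h^{1/2}h^{-1/2}$ to match the penalty scaling:
\begin{equation*}
2\sum_{e\in\Gammah}\int_e\avg{\grad c\cdot\normal_e}\jump{c} \leq 2\Big(\sum_{e\in\Gammah} h\,\norm{\avg{\grad c\cdot\normal_e}}{L^2(e)}^2\Big)^{1/2}\Big(\frac1h\sum_{e\in\Gammah}\norm{\jump{c}}{L^2(e)}^2\Big)^{1/2}.
\end{equation*}
The decisive ingredient is the discrete trace (inverse) inequality for polynomials: by shape-regularity of the mesh, and since each face is shared by at most two elements, there is a constant $C_t>0$ independent of $h$ such that $\sum_{e\in\Gammah} h\,\norm{\avg{\grad c\cdot\normal_e}}{L^2(e)}^2 \leq C_t^2\sum_{E\in\setE_h}\norm{\grad c}{L^2(E)}^2$.

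Then I would apply Young's inequality $2ab \leq \epsilon a^2 + \epsilon^{-1}b^2$ with $\epsilon = \tfrac12$ to split the product, yielding the bound $\tfrac12\sum_{E}\norm{\grad c}{L^2(E)}^2 + 2C_t^2\,\tfrac1h\sum_{e}\norm{\jump{c}}{L^2(e)}^2$ for the flux term. Subtracting this from the identity above gives
\begin{equation*}
a_{\mathcal{D}}(c,c) \geq \frac12\sum_{E\in\setE_h}\norm{\grad c}{L^2(E)}^2 + \big(\sigma - 2C_t^2\big)\frac1h\sum_{e\in\Gammah}\norm{\jump{c}}{L^2(e)}^2.
\end{equation*}
Hence, requiring $\sigma > 2C_t^2$ makes the penalty coefficient positive, and taking $K_\alpha = \min\{\tfrac12,\, 1 - 2C_t^2/\sigma\} > 0$ delivers $a_{\mathcal{D}}(c,c) \geq K_\alpha\norm{c}{\mathrm{DG}}^2$, which is the claim. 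This also makes precise the meaning of ``$\sigma$ sufficiently large''.

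I expect the main obstacle to be little more than the bookkeeping on the trace constant: the explicit threshold for $\sigma$ is dictated by $C_t$ together with the maximal number of faces per element, and one must confirm $C_t$ is genuinely mesh-independent, which rests on a scaling argument to a reference element and thus on the nondegeneracy (regularity) of $\setE_h$. Everything after the trace inequality is a routine Cauchy--Schwarz/Young combination, so the conceptual weight of the lemma lies entirely in that single polynomial trace estimate.
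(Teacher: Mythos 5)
Your proof is correct and is essentially the standard argument for SIPG coercivity (Cauchy--Schwarz on the flux term, the discrete trace inequality for polynomials, then Young's inequality to absorb, giving the threshold $\sigma > 2C_t^2$); the paper itself omits this proof and refers to the literature, where exactly this argument appears. One small caveat: since the paper's form and DG norm use the \emph{global} mesh size $h$ in the penalty, the uniformity of your trace constant $C_t$ in the step $\sum_{e\in\Gammah} h\,\norm{\avg{\grad c\cdot\normal_e}}{L^2(e)}^2 \leq C_t^2\sum_{E\in\setE_h}\norm{\grad c}{L^2(E)}^2$ implicitly requires quasi-uniformity (bounded $h/h_E$), not just shape-regularity, a convention the paper tacitly adopts as well.
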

\begin{lemma}[Continuity of $a_\strain$]
The bilinear form $a_\strain$ is continuous on $\mathbf{X}_h$ equipped with the energy norm, i.\,e., there exists a constant $C_\strain>0$ independent of mesh size $h$ such that for all $\vec{v},\vec{\theta}$ in $\mathbf{X}_h$\,,~$\abs{a_\strain(\vec{v},\vec{\theta})} \leq C_\strain\norm{\vec{v}}{\mathrm{DG}}\norm{\vec{\theta}}{\mathrm{DG}}$\,.
\end{lemma}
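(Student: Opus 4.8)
The plan is to bound the four terms appearing in the definition \eqref{eq:CHNS:DG_strain} of $a_\strain(\vec{v},\vec{\theta})$ one at a time and to collect them into a single product $\norm{\vec{v}}{\mathrm{DG}}\norm{\vec{\theta}}{\mathrm{DG}}$; this mirrors the (omitted) continuity proof for $a_{\mathcal{D}}$, the only change being that the arguments are now vector-valued. Two of the four terms are immediate. For the volume term, the Cauchy--Schwarz inequality gives
\begin{equation*}
\abs{\sum_{E\in\setE_h}\int_E \grad{\vec{v}}:\grad{\vec{\theta}}} \leq \Big(\sum_{E\in\setE_h}\norm{\grad{\vec{v}}}{L^2(E)}^2\Big)^{1/2}\Big(\sum_{E\in\setE_h}\norm{\grad{\vec{\theta}}}{L^2(E)}^2\Big)^{1/2} \leq \norm{\vec{v}}{\mathrm{DG}}\norm{\vec{\theta}}{\mathrm{DG}},
\end{equation*}
and the penalty term is handled the same way, the weight $\sigma/h$ on each face matching the penalty contribution in both energy norms.

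The real work is in the two consistency terms $\sum_e \int_e \avg{\grad{\vec{v}}\cdot\normal_e}\cdot\jump{\vec{\theta}}$ and its symmetric counterpart. I would treat one and obtain the other by swapping the roles of $\vec{v}$ and $\vec{\theta}$. After applying Cauchy--Schwarz on each face, I insert a factor $h^{1/2}h^{-1/2}$ and apply Cauchy--Schwarz over the set of faces:
\begin{equation*}
\abs{\sum_{e\in\Gammah\cup\partial\Omega}\int_e \avg{\grad{\vec{v}}\cdot\normal_e}\cdot\jump{\vec{\theta}}} \leq \Big(\sum_{e}h\,\norm{\avg{\grad{\vec{v}}\cdot\normal_e}}{L^2(e)}^2\Big)^{1/2}\Big(\sum_{e}\frac{1}{h}\norm{\jump{\vec{\theta}}}{L^2(e)}^2\Big)^{1/2}.
\end{equation*}
The second factor is at most $\sigma^{-1/2}\norm{\vec{\theta}}{\mathrm{DG}}$ directly from the definition of the energy norm. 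For the first factor, I invoke the discrete trace inequality for polynomials on a shape-regular mesh, namely $\norm{\grad{\vec{v}}\cdot\normal_e}{L^2(e)}^2 \leq C\,h^{-1}\norm{\grad{\vec{v}}}{L^2(E)}^2$ for each element $E$ adjacent to $e$; expanding the average over the two adjacent elements and summing over faces, using that each element borders only a bounded number of faces, bounds this first factor by $C\norm{\vec{v}}{\mathrm{DG}}$.

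The only delicate point, and hence the main obstacle, is the bookkeeping in this trace estimate: one must check that the mesh-dependent factors $h^{1/2}$ and $h^{-1/2}$ cancel cleanly against the $h^{-1}$ produced by the trace inequality and against the $\sigma/h$ weight in the penalty seminorm, so that the final constant is genuinely independent of $h$. This rests on the shape-regularity of $\setE_h$, which makes the trace constant $C$ uniform, and on the polynomial degree $q$ being fixed. Collecting the bounds for all four terms and setting $C_\strain$ equal to the sum of the resulting constants yields $\abs{a_\strain(\vec{v},\vec{\theta})} \leq C_\strain\norm{\vec{v}}{\mathrm{DG}}\norm{\vec{\theta}}{\mathrm{DG}}$ for all $\vec{v},\vec{\theta}\in\mathbf{X}_h$, with $C_\strain$ independent of $h$. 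I emphasize that, unlike the coercivity lemma, this continuity estimate requires no lower bound on the penalty parameter $\sigma$.
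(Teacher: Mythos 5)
Your proof is correct and is precisely the standard argument: the paper itself omits this proof, citing \cite{riviere2008}, and the proof given there proceeds exactly as you do, by term-by-term Cauchy--Schwarz estimates combined with the inverse (discrete) trace inequality $\norm{\grad{\vec{v}}}{L^2(e)}^2 \leq C h^{-1}\norm{\grad{\vec{v}}}{L^2(E)}^2$ for polynomials on shape-regular elements, which is exactly what makes the $h$-factors cancel against the $\sigma/h$ penalty weight. Your closing observation that continuity, unlike coercivity, needs no lower bound on $\sigma$ is also accurate (the constant degrades only like $\sigma^{-1/2}$ in the consistency terms, and remains independent of $h$).
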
 
\begin{lemma}[Coercivity of $a_\strain$]\label{lem:astraincoer}
Assume that $\sigma$ is sufficiently large. Then, there exists a constant $K_\strain>0$ independent of mesh size $h$ such that
\[
a_\strain(\vec{v},\vec{v}) \geq K_\strain\norm{\vec{v}}{\mathrm{DG}}^2, \quad \forall \vec{v}\in\mathbf{X}_h.
\]
\end{lemma}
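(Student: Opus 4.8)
The plan is to expand $a_\strain(\vec{v},\vec{v})$ and control the single indefinite term, namely the consistency/symmetry cross term, by the two manifestly nonnegative terms (the broken gradient energy and the penalty term), choosing the penalty parameter $\sigma$ large enough to absorb it. Setting $\vec{\theta}=\vec{v}$ in the definition \eqref{eq:CHNS:DG_strain}, the two off-diagonal contributions coincide, so
\[
a_\strain(\vec{v},\vec{v}) = \sum_{E\in\setE_h} \norm{\grad{\vec{v}}}{L^2(E)}^2
- 2\sum_{e\in\Gammah\cup\partial\Omega} \int_e \avg{\grad{\vec{v}}\cdot\normal_e}\cdot\jump{\vec{v}}
+ \frac{\sigma}{h}\sum_{e\in\Gammah\cup\partial\Omega} \norm{\jump{\vec{v}}}{L^2(e)}^2.
\]
The first and third terms together are exactly $\norm{\vec{v}}{\mathrm{DG}}^2$, so it remains to bound the middle term from below.

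First I would apply Cauchy--Schwarz on each face $e$ to get $\big|\int_e \avg{\grad{\vec{v}}\cdot\normal_e}\cdot\jump{\vec{v}}\big| \le \norm{\avg{\grad{\vec{v}}\cdot\normal_e}}{L^2(e)}\,\norm{\jump{\vec{v}}}{L^2(e)}$. The essential ingredient is the discrete (polynomial) trace inequality: since $\vec{v}$ is a piecewise polynomial on a shape-regular mesh, there is a constant $C_t>0$, depending only on the polynomial degree $q$ and the regularity of $\setE_h$, such that $\norm{\grad{\vec{v}}\cdot\normal_e}{L^2(e)}^2 \le C_t\, h^{-1}\,\norm{\grad{\vec{v}}}{L^2(E)}^2$ for each element $E$ adjacent to $e$. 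Together with the definition of the average this gives $\norm{\avg{\grad{\vec{v}}\cdot\normal_e}}{L^2(e)}^2 \le C_t\, h^{-1}\big(\norm{\grad{\vec{v}}}{L^2(E_{k^-})}^2 + \norm{\grad{\vec{v}}}{L^2(E_{k^+})}^2\big)$, and summing over all faces while counting the bounded number of faces per element (again via shape regularity) yields $\sum_e h\,\norm{\avg{\grad{\vec{v}}\cdot\normal_e}}{L^2(e)}^2 \le \widetilde C_t \sum_E \norm{\grad{\vec{v}}}{L^2(E)}^2$ for an aggregated constant $\widetilde C_t$ independent of $h$.

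Next I would insert a Young parameter $\delta>1$, writing $2ab \le \frac{\delta h}{\sigma} a^2 + \frac{\sigma}{\delta h} b^2$ with $a=\norm{\avg{\grad{\vec{v}}\cdot\normal_e}}{L^2(e)}$ and $b=\norm{\jump{\vec{v}}}{L^2(e)}$. Summing over faces and using the trace bound from the previous step, the cross term is controlled by $\frac{\delta \widetilde C_t}{\sigma}\sum_E \norm{\grad{\vec{v}}}{L^2(E)}^2 + \frac{1}{\delta}\cdot\frac{\sigma}{h}\sum_e \norm{\jump{\vec{v}}}{L^2(e)}^2$. Substituting back gives
\[
a_\strain(\vec{v},\vec{v}) \ge \Big(1 - \frac{\delta \widetilde C_t}{\sigma}\Big)\sum_E \norm{\grad{\vec{v}}}{L^2(E)}^2 + \Big(1 - \frac{1}{\delta}\Big)\frac{\sigma}{h}\sum_e \norm{\jump{\vec{v}}}{L^2(e)}^2.
\]
Choosing, for instance, $\delta=2$ (so the second coefficient equals $1/2>0$) and then requiring $\sigma>2\widetilde C_t$ makes the first coefficient positive as well; one sets $K_\strain=\min\big(1-\delta\widetilde C_t/\sigma,\,1-1/\delta\big)>0$, which is independent of $h$ because $\widetilde C_t$ is. This is precisely what the ``$\sigma$ sufficiently large'' hypothesis encodes.

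The main obstacle, and the only nontrivial step, is the discrete trace inequality with the sharp $h^{-1}$ scaling on faces; its mesh-independence is what forces the shape-regularity assumption on $\setE_h$ and is what ultimately makes $K_\strain$ independent of $h$. Everything else is Cauchy--Schwarz and Young's inequality, and the argument is identical to the standard symmetric interior penalty coercivity proof (see \cite{riviere2008}); the very same reasoning, applied to the scalar form, underlies the already-stated coercivity of $a_{\mathcal{D}}$.
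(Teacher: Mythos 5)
Your proof is correct and is exactly the standard symmetric interior penalty coercivity argument that the paper itself omits for brevity, deferring to \cite{riviere2008,ChaabaneGiraultPuelzRiviere2017}: expand $a_\strain(\vec{v},\vec{v})$, bound the cross term by Cauchy--Schwarz plus the discrete trace inequality for piecewise polynomials, and absorb it via Young's inequality once $\sigma$ is large enough, with the same mechanism underlying the coercivity of $a_{\mathcal{D}}$. The only caveat is that your trace inequality stated with the global mesh size $h$ (rather than the local element size $h_E$) tacitly requires quasi-uniformity of the mesh family, not just shape regularity; this is inherited from the paper's own choice of a single parameter $h$ in the penalty term and in the DG norm, so it is a shared implicit assumption rather than a defect of your argument.
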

\begin{lemma}[Inf-sup]\label{lem:infsup}
There exists a constant $\beta > 0$, independent of mesh size $h$, such that
\begin{equation*}
\inf_{\phi \in Q_h}\,\sup_{\vec{\theta} \in \mathbf{X}_h} \frac{b_{\mathcal{P}}(\phi,\vec{\theta})}{\norm{\phi}{L^2(\Omega)}\norm{\vec{\theta}}{\mathrm{DG}}} \geq
\beta.
\end{equation*}
\end{lemma}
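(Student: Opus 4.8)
The plan is to derive this discrete inf-sup bound from the classical continuous inf-sup condition for the Stokes problem, transferring it to the discrete spaces by means of a Fortin-type interpolation operator. Fix an arbitrary $\phi\in Q_h$. Since $Q_h\subset L_0^2(\Omega)$, the classical surjectivity of the divergence \cite{GirRav86} provides a field $\vec{w}\in H_0^1(\Omega)^d$ with $\div{\vec{w}}=-\phi$ and $\norm{\vec{w}}{H^1(\Omega)}\leq C_\Omega\norm{\phi}{L^2(\Omega)}$, where $C_\Omega$ depends only on $\Omega$. Because $\vec{w}$ is continuous across interior faces and vanishes on $\partial\Omega$, every normal jump $\jump{\vec{w}\cdot\normal_e}$ is zero, so all face contributions in $b_{\mathcal{P}}$ drop out and
\[
b_{\mathcal{P}}(\phi,\vec{w}) = -\sum_{E\in\setE_h}\int_E\phi\,\div{\vec{w}} = \int_\Omega\phi^2 = \norm{\phi}{L^2(\Omega)}^2.
\]

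Next I would invoke (or construct) a Fortin operator $\Pi_h:H_0^1(\Omega)^d\rightarrow\mathbf{X}_h$ enjoying two properties: a commuting property, $b_{\mathcal{P}}(\phi,\Pi_h\vec{w}-\vec{w})=0$ for all $\phi\in Q_h$, and an $h$-uniform stability bound, $\norm{\Pi_h\vec{w}}{\mathrm{DG}}\leq C_F\norm{\vec{w}}{H^1(\Omega)}$. The commuting property is arranged by designing $\Pi_h$ to preserve exactly the moments that $b_{\mathcal{P}}(\phi,\cdot)$ tests: on each element $E$ the moments of the divergence against $\IP_{q-1}(E)$, and on each face $e$ the moments of the normal trace against $\IP_{q-1}(e)$. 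These match the pressure degree $q-1$ and the degree of the normal trace appearing in $b_{\mathcal{P}}$, so the bilinear form cannot distinguish $\Pi_h\vec{w}$ from $\vec{w}$ when paired against $Q_h$. The stability estimate in the energy norm follows by combining the element-wise $H^1$-stability of the interpolation with the face bound $\frac{1}{h}\norm{\jump{\Pi_h\vec{w}-\vec{w}}}{L^2(e)}^2\leq C\norm{\vec{w}}{H^1(\Omega)}^2$, summed over faces, which is admissible because $\vec{w}$ itself has no jumps and the face approximation error scales like $h^{1/2}$. Operators of this type for the fully discontinuous $\IP_q$--$\IP_{q-1}$ pair are standard; see \cite{girault2005splitting,girault2005discontinuous,riviere2008}.

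Finally I would chain these estimates, using $\Pi_h\vec{w}\in\mathbf{X}_h$ as a test function:
\begin{align*}
\sup_{\vec{\theta}\in\mathbf{X}_h}\frac{b_{\mathcal{P}}(\phi,\vec{\theta})}{\norm{\vec{\theta}}{\mathrm{DG}}}
&\geq \frac{b_{\mathcal{P}}(\phi,\Pi_h\vec{w})}{\norm{\Pi_h\vec{w}}{\mathrm{DG}}}
= \frac{b_{\mathcal{P}}(\phi,\vec{w})}{\norm{\Pi_h\vec{w}}{\mathrm{DG}}}
= \frac{\norm{\phi}{L^2(\Omega)}^2}{\norm{\Pi_h\vec{w}}{\mathrm{DG}}}\\
&\geq \frac{\norm{\phi}{L^2(\Omega)}^2}{C_F C_\Omega\,\norm{\phi}{L^2(\Omega)}}
= \frac{1}{C_F C_\Omega}\norm{\phi}{L^2(\Omega)}.
\end{align*}
Dividing by $\norm{\phi}{L^2(\Omega)}$ and taking the infimum over $\phi\in Q_h$ yields the claim with $\beta=1/(C_F C_\Omega)$, independent of $h$.

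The main obstacle is the construction of the Fortin operator: one must secure simultaneously the exact moment-preservation needed for the commuting identity and the mesh-independent stability in the DG norm, in particular the control of the jump-penalty contribution $\frac{\sigma}{h}\sum_{e}\norm{\jump{\Pi_h\vec{w}}}{L^2(e)}^2$. Everything else — the continuous inf-sup and the arithmetic of the final chain — is routine once $\Pi_h$ is in hand.
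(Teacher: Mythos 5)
Your proof is correct, and it is essentially the argument the paper itself relies on: the paper gives no proof of \cref{lem:infsup}, but recalls it as a known property and defers to \cite{ChaabaneGiraultPuelzRiviere2017,girault2005splitting,riviere2008}, where the inf-sup condition is established by exactly your two steps (continuous surjectivity of the divergence from $H_0^1(\Omega)^d$ onto $L_0^2(\Omega)$, followed by a stable commuting interpolation operator). The one gap you flag, the construction of the Fortin operator, does not need to be filled from scratch, because the paper already imports such an operator: $\mathcal{R}_h$ of \cref{lem:Rh} satisfies the commuting property \cref{eq:approxRh1}, namely $b_{\mathcal{P}}(\phi,\mathcal{R}_h(\vec{w})-\vec{w})=0$ for all $\phi\in Q_h$, and \cref{eq:approxRh3} with $s=1$ gives $\norm{\mathcal{R}_h(\vec{w})-\vec{w}}{\mathrm{DG}}\leq C\vert\vec{w}\vert_{H^1(\Omega)}$; since your $\vec{w}\in H_0^1(\Omega)^d$ has no jumps, $\norm{\vec{w}}{\mathrm{DG}}=\vert\vec{w}\vert_{H^1(\Omega)}$, and the triangle inequality yields the $h$-uniform stability $\norm{\mathcal{R}_h(\vec{w})}{\mathrm{DG}}\leq C_F\norm{\vec{w}}{H^1(\Omega)}$ that you require. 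Substituting $\mathcal{R}_h$ for your interpolant (also advisable because the paper reserves the symbol $\Pi_h$ for the $L^2$ projection onto $Q_h$) turns your sketch into a complete proof using only tools already available in the paper; the remaining chain of identities and inequalities, including $b_{\mathcal{P}}(\phi,\vec{w})=\norm{\phi}{L^2(\Omega)}^2$ and the final bound $\beta = 1/(C_F C_\Omega)$, is correct as written.
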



\subsection{Discrete mass conservation}\label{sec:CHNS:DG_mass_conservation}
\begin{theorem}\label{thm:CHNS:discrete_mass_conservation}
The DG scheme \eqref{eq:CHNS:DGscheme} satisfies the discrete global mass conservation property, i.\,e., for any $1\leq n \leq N$\,, we have
\begin{equation*}
(c_h^n,1) = (c_h^{0},1) = (c^0,1) = \big(c(t^n),1\big).
\end{equation*}
\end{theorem}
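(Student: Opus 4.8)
The plan is to prove the identity by a simple induction in $n$, exploiting the fact that the constant function $1$ lies in the test space $S_h$ and annihilates every form in \eqref{eq:CHNS:DGscheme:a} except the discrete time derivative. First I would take $\chi = 1$ in \eqref{eq:CHNS:DGscheme:a}. Since $1$ is globally constant, we have $\grad 1 = 0$ on each element and $\jump{1} = 0$ on each interior face. Inspecting the definition \eqref{eq:CHNS:DG_diffusion} of $a_{\mathcal{D}}$, every volume term carries a factor $\grad\chi$ and every face term carries a factor $\jump{\chi}$, so $a_{\mathcal{D}}(\mu_h^n, 1) = 0$. Likewise, from the definition \eqref{eq:CHNS:DG_advection} of $a_{\mathcal{A}}$, the volume integral contains $\grad\chi$ and the face integral contains $\jump{\chi}$, whence $a_{\mathcal{A}}(c_h^{n-1}, \vec{v}_h^n, 1) = 0$. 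Here the conservative form of the advection term, highlighted in the earlier remark, is exactly what makes this cancellation clean.

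With both forms eliminated, \eqref{eq:CHNS:DGscheme:a} reduces to $(\delta_\tau c_h^n, 1) = 0$, that is, $(c_h^n - c_h^{n-1}, 1)/\tau = 0$, which gives $(c_h^n, 1) = (c_h^{n-1}, 1)$. Iterating this relation from $n$ down to $0$ yields $(c_h^n, 1) = (c_h^0, 1)$ for every $1 \leq n \leq N$, establishing the first equality in the statement.

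To close the chain of equalities, I would then use the choice of initial data. Since $c_h^0 = \mathcal{P}_h c^0$ is the elliptic projection defined in \eqref{eq:CHNS:elliptic_proj_c}, which enforces the constraint $(\mathcal{P}_h c^0 - c^0, 1) = 0$, we obtain $(c_h^0, 1) = (c^0, 1)$. Finally, the continuous global mass conservation result (\cref{thm:CHNS:mass_conservation}) gives $\int_\Omega c(t^n) = \int_\Omega c^0$, i.e., $(c(t^n), 1) = (c^0, 1)$, which supplies the last equality and completes the proof.

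I do not anticipate any serious obstacle: the argument is essentially a one-line test-function computation followed by a telescoping induction. The only point requiring care is the verification that $a_{\mathcal{D}}$ and $a_{\mathcal{A}}$ both vanish under $\chi = 1$, which hinges on the structural observation that every term in those forms is proportional either to a polynomial gradient or to a jump, both of which are zero for a global constant.
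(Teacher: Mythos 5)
Your proposal is correct and takes essentially the same route as the paper's proof: choose $\chi=1$ in \eqref{eq:CHNS:DGscheme:a}, use that $a_{\mathcal{D}}(\mu_h^n,1)=0$ and $a_{\mathcal{A}}(c_h^{n-1},\vec{v}_h^n,1)=0$, telescope in $n$, and then invoke the constraint in \eqref{eq:CHNS:elliptic_proj_c} together with \cref{thm:CHNS:mass_conservation} for the remaining two equalities. You merely spell out the cancellation details (vanishing gradients and jumps of the constant test function) that the paper leaves implicit.
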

\begin{proof}
The proof for the first equality is straightforward and obtained by choosing $\chi = 1$ in \eqref{eq:CHNS:DGscheme:a} and by using $a_{\mathcal{D}}(\mu_h^n,1)=0$ and $a_{\mathcal{A}}(c_h^n,\vec{v}_h^n,1)=0$. Furthermore, applying \eqref{eq:CHNS:elliptic_proj_c} and \cref{thm:CHNS:mass_conservation}, we obtain the second and third equalities.
\end{proof}
\begin{remark}
One interesting property that naturally comes with the primal DG scheme is the conservation of mass on each mesh element. For instance, we fix an element $E$ that belongs to the interior of the domain, i.\,e., $\partial E \cap \partial \Omega = \emptyset$. It is easy to show the PDE solution of the Cahn--Hilliard--Navier--Stokes system satisfies
\begin{equation*}
\frac{\dd}{\dd t}\int_E c - \int_{\partial E} \grad{\mu}\cdot\normal_E + \int_{\partial E} c\vec{v}\cdot\normal_E = 0.
\end{equation*}
The artificial numerical mass can be exactly computed by choosing $\chi = 1$ on $E$ and vanishes elsewhere in \eqref{eq:CHNS:DGscheme:a}. Then we obtain a balance equation
\begin{multline*}
\frac{1}{\tau}\int_E (c_h^n-c_h^{n-1})\, 
\underbrace{- \int_{\partial E} \avg{\grad{\mu_h^n}}\cdot\normal_E + \int_{\partial E} \avg{c_h^n}\avg{\vec{v}_h^n}\cdot\normal_E}_{\text{flux of mass passing through}~\partial E}\\
= \underbrace{-\frac{\sigma}{h}\int_{\partial E} \Big((\mu_h^n)^\mathrm{int}-(\mu_h^n)^\mathrm{ext}\Big)}_{\text{artificial mass due to the penalty}}.
\end{multline*} 
\end{remark}

\subsection{Existence and uniqueness}\label{sec:CHNS:DGexistence_uniqueness}
Investigating the unique solvability of the fully discrete DG method \eqref{eq:CHNS:DGscheme} is a complicated task. We will design an equivalent scheme, which is based on an auxiliary flow problem, to overcome this challenge. The existence and uniqueness of the solution for our equivalent scheme can be proved by using nonlinear operator analysis techniques. To this end, we begin our argument by introducing the following auxiliary flow problem: for any $1\leq n \leq N$, given $\vec{v}_h^{n-1} \in \mathbf{X}_h$ find $(\tilde{\vec{v}}_h^n,\tilde{p}_h^n) \in \mathbf{X}_h \times Q_h$ such that
\begin{subequations}\label{eq:CHNS:solvability:aux}
\begin{eqnarray}
\begin{aligned}
\frac{1}{\tau}(\tilde{\vec{v}}_h^n-\vec{v}_h^{n-1},\vec{\theta}) + a_{\mathcal{C}}(\vec{v}_h^{n-1},\vec{v}_h^{n-1},\tilde{\vec{v}}_h^n,\vec{\theta}) \hspace*{12em}\\+ \mu_\mathrm{s} a_\strain(\tilde{\vec{v}}_h^n, \vec{\theta}) + b_{\mathcal{P}}(\tilde{p}_h^n,\vec{\theta}) = 0, \quad \forall \vec{\theta} \in \mathbf{X}_h,
\end{aligned}\label{eq:CHNS:solvability:aux:a}\\
b_{\mathcal{P}}(\phi,\tilde{\vec{v}}_h^n) = 0, \quad \forall \phi \in Q_h.\label{eq:CHNS:solvability:aux:b}
\end{eqnarray}
\end{subequations}
\begin{lemma}\label{lem:CHNS:unique_sol_aux_flow}
There exists a unique solution to the auxiliary flow problem \eqref{eq:CHNS:solvability:aux} for any mesh size $h$ and time step size $\tau$. 
\end{lemma}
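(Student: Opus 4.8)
The plan is to recognize that \eqref{eq:CHNS:solvability:aux} is a \emph{linear} saddle-point system for the unknown pair $(\tilde{\vec{v}}_h^n,\tilde{p}_h^n)$, the field $\vec{v}_h^{n-1}$ being a fixed datum from the previous time step. Transferring the term $\frac{1}{\tau}(\vec{v}_h^{n-1},\vec{\theta})$ to the right-hand side, the left-hand side reads $\frac{1}{\tau}(\tilde{\vec{v}}_h^n,\vec{\theta})+a_{\mathcal{C}}(\vec{v}_h^{n-1},\vec{v}_h^{n-1},\tilde{\vec{v}}_h^n,\vec{\theta})+\mu_\mathrm{s}a_\strain(\tilde{\vec{v}}_h^n,\vec{\theta})+b_{\mathcal{P}}(\tilde{p}_h^n,\vec{\theta})$, which by \cref{lem:CHNS:continuityaC} (linearity of $a_{\mathcal{C}}$ in its last two slots, the convection field being frozen) together with the bilinearity of $a_\strain$ and $b_{\mathcal{P}}$ is linear in $(\tilde{\vec{v}}_h^n,\tilde{p}_h^n)$. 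Since the trial and test spaces coincide and are finite-dimensional, the system is square; hence existence and uniqueness both follow once I show that the associated homogeneous problem (zero right-hand side, convection field still frozen at $\vec{v}_h^{n-1}$) has only the trivial solution.

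First I would prove that the velocity vanishes. Testing the homogeneous version of \eqref{eq:CHNS:solvability:aux:a} with $\vec{\theta}=\tilde{\vec{v}}_h^n$ and invoking \eqref{eq:CHNS:solvability:aux:b} with $\phi=\tilde{p}_h^n$ to cancel the pressure coupling $b_{\mathcal{P}}(\tilde{p}_h^n,\tilde{\vec{v}}_h^n)$, one obtains
\begin{equation*}
\frac{1}{\tau}\norm{\tilde{\vec{v}}_h^n}{L^2(\Omega)}^2+a_{\mathcal{C}}(\vec{v}_h^{n-1},\vec{v}_h^{n-1},\tilde{\vec{v}}_h^n,\tilde{\vec{v}}_h^n)+\mu_\mathrm{s}\,a_\strain(\tilde{\vec{v}}_h^n,\tilde{\vec{v}}_h^n)=0.
\end{equation*}
The first term is nonnegative, the convection term is nonnegative by \cref{lem:CHNS:positivity_convection} (applied with $\vec{v}=\vec{v}_h^{n-1}$ and $\vec{z}=\tilde{\vec{v}}_h^n$), and by the coercivity \cref{lem:astraincoer} the last term bounds $\mu_\mathrm{s}K_\strain\norm{\tilde{\vec{v}}_h^n}{\mathrm{DG}}^2$ from below. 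Consequently $\norm{\tilde{\vec{v}}_h^n}{\mathrm{DG}}=0$, and since $\norm{\cdot}{\mathrm{DG}}$ is a norm on $\mathbf{X}_h$ this forces $\tilde{\vec{v}}_h^n=\vec{0}$.

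With $\tilde{\vec{v}}_h^n=\vec{0}$ in hand, the homogeneous equation \eqref{eq:CHNS:solvability:aux:a} reduces to $b_{\mathcal{P}}(\tilde{p}_h^n,\vec{\theta})=0$ for every $\vec{\theta}\in\mathbf{X}_h$, whereupon the inf-sup condition \cref{lem:infsup} gives $\norm{\tilde{p}_h^n}{L^2(\Omega)}=0$, i.e. $\tilde{p}_h^n=0$. This shows the homogeneous system is trivial, and uniqueness together with the square-system observation yields existence as well.

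I do not anticipate a genuine difficulty here, as the argument is the standard Babu\v{s}ka--Brezzi mechanism for a discrete Stokes problem; the two points deserving attention are that the explicitly-treated convection term contributes with the correct sign -- precisely the content of \cref{lem:CHNS:positivity_convection} -- and that the pressure coupling disappears under self-testing by virtue of the discrete incompressibility constraint \eqref{eq:CHNS:solvability:aux:b}. Alternatively, one may appeal to Brezzi's theorem directly: the combined velocity form is continuous by \cref{lem:CHNS:continuityaC} and the continuity of $a_\strain$, it is coercive on the kernel $\mathbf{V}_h$ by the same positivity and coercivity bounds, and $b_{\mathcal{P}}$ is inf-sup stable by \cref{lem:infsup}; the self-contained finite-dimensional argument above is, however, shorter.
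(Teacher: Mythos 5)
Your proposal is correct and takes essentially the same approach as the paper: a linear, finite-dimensional (square) system for which uniqueness implies existence, with uniqueness of the velocity obtained from the positivity of $a_{\mathcal{C}}$ (\cref{lem:CHNS:positivity_convection}) and the coercivity of $a_\strain$ (\cref{lem:astraincoer}), and the pressure handled by the inf-sup condition (\cref{lem:infsup}). The only organizational difference is that the paper first restricts to the discretely divergence-free subspace $\mathbf{V}_h$ and then uses inf-sup to \emph{construct} the pressure, whereas you keep the coupled saddle-point system and use inf-sup only to show the homogeneous pressure vanishes; both are the same Babu\v{s}ka--Brezzi mechanism.
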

\begin{proof}
We first show existence and uniqueness of $\tilde{\vec{v}}_h^n\in\mathbf{V}_h$ satisfying
\begin{align*}
\frac{1}{\tau}(\tilde{\vec{v}}_h^n-\vec{v}_h^{n-1},\vec{\theta}) 
+ a_{\mathcal{C}}(\vec{v}_h^{n-1},\vec{v}_h^{n-1},\tilde{\vec{v}}_h^n,\vec{\theta}) 
+ \mu_\mathrm{s} a_\strain(\tilde{\vec{v}}_h^n, \vec{\theta}) = 0, && \forall \vec{\theta} \in \mathbf{V}_h.
\end{align*}
Since the problem is linear and finite-dimensional, it suffices to show uniqueness. This is easily obtained by using positivity of $a_\mathcal{C}$ and coercivity of $a_\strain$ (see \cref{lem:CHNS:positivity_convection} and \cref{lem:astraincoer}). To recover the discrete pressure $\tilde{p}_h^n\in Q_h$, we then use the inf-sup condition of \cref{lem:infsup}.
\end{proof}

Owing to the last result, we can construct the following scheme by employing the unique discrete solution from the auxiliary flow problem: for any $1\leq n \leq N$\,, given $(y_h^{n-1},\vec{v}_h^{n-1}) \in M_h\times\mathbf{X}_h$, and corresponding $(\tilde{\vec{v}}_h^n,\tilde{p}_h^n)$ satisfying \eqref{eq:CHNS:solvability:aux}, find $(y_h^n,w_h^n,\hat{\vec{v}}_h^n,\hat{p}_h^n) \in M_h \times M_h \times \mathbf{X}_h \times Q_h$ such that
\begin{subequations}\label{CHNS:solvability:schemeB}
\begin{eqnarray}
\frac{1}{\tau}(y_h^n-\hat{y}_h^{n-1},\mathring{\chi}) + a_{\mathcal{D}}(w_h^n,\mathring{\chi}) + a_{\mathcal{A}}(y_h^{n-1}+\bar{c}_0,\hat{\vec{v}}_h^n,\mathring{\chi}) = 0, \quad \forall \mathring{\chi} \in M_h,\label{CHNS:solvability:schemeB:a}\\
\hspace*{2.5em} 
\big(\Phi_{+}\,\!'(y_h^n+\bar{c}_0)+\Phi_{-}\,\!'(y_h^{n-1}+\bar{c}_0),\mathring{\varphi}\big) + \kappa a_{\mathcal{D}}(y_h^n,\mathring{\varphi}) - (w_h^n,\mathring{\varphi}) = 0, \quad \forall \mathring{\varphi} \in M_h,\label{CHNS:solvability:schemeB:b}\\
\begin{aligned}
\frac{1}{\tau}(\hat{\vec{v}}_h^n ,\vec{\theta}) + a_{\mathcal{C}}(\vec{v}_h^{n-1},\vec{v}_h^{n-1},\hat{\vec{v}}_h^n,\vec{\theta}) + \mu_\mathrm{s} a_\strain(\hat{\vec{v}}_h^n, \vec{\theta}) \hspace*{13.2em}\\ + b_{\mathcal{P}}(\hat{p}_h^n,\vec{\theta}) - b_\mathcal{I}(y_h^{n-1}+\bar{c}_0,w_h^n,\vec{\theta}) = 0, \quad \forall \vec{\theta} \in \mathbf{X}_h,
\end{aligned}\label{CHNS:solvability:schemeB:c}\\
b_{\mathcal{P}}(\phi,\hat{\vec{v}}_h^n) = 0, \quad \forall \phi \in Q_h,\label{CHNS:solvability:schemeB:d}
\end{eqnarray}
\end{subequations}
where the initial datum is  defined to be $y_h^0 = \mathcal{P}_h c^0-\bar{c}_0$ and  we recall the
initial velocity $\vec{v}_h^0$ is the L$^2$ projection of $\vec{v}^0$ onto $\mathbf{X}_h$. We also denote
 $\hat{y}_h^{n-1} \in M_h$ the solution of
\begin{align*}
(y_h^{n-1},\mathring{\chi}) - \tau a_{\mathcal{A}}(y_h^{n-1}+\bar{c}_0,\tilde{\vec{v}}_h^n,\mathring{\chi}) = (\hat{y}_h^{n-1},\mathring{\chi}), && \forall \mathring{\chi} \in M_h,
\end{align*}
whose existence and uniqueness are asserted by the Riesz representation theorem. Our next goal is to prove the scheme \eqref{CHNS:solvability:schemeB} is equivalent to the DG scheme \eqref{eq:CHNS:DGscheme}. Due to the translational invariance of the trilinear form $a_{\mathcal{A}}$ with respect to the third argument and using the same techniques as in \cite{LFR2018numerical}, we have
\begin{lemma}\label{lem:CHNS:solvability:P}
The unique solvability of the DG scheme \eqref{eq:CHNS:DGscheme} is equivalent to the unique solvability of the problem: for any $1\leq n \leq N$\,, given $(y_h^{n-1},\vec{v}_h^{n-1}) \in M_h\times\mathbf{X}_h$ find $(y_h^n,w_h^n,\vec{v}_h^n,p_h^n) \in M_h \times M_h \times \mathbf{X}_h \times Q_h$ such that
\begin{subequations}\label{eq:CHNS:solvability:P}
\begin{eqnarray}
(\delta_\tau y_h^n,\mathring{\chi}) + a_{\mathcal{D}}(w_h^n,\mathring{\chi}) + a_{\mathcal{A}}(y_h^{n-1}+\bar{c}_0,\vec{v}_h^n,\mathring{\chi}) = 0, \quad \forall \mathring{\chi} \in M_h,\\
\hspace*{2.5em}
\big(\Phi_{+}\,\!'(y_h^n+\bar{c}_0)+\Phi_{-}\,\!'(y_h^{n-1}+\bar{c}_0),\mathring{\varphi}\big) + \kappa a_{\mathcal{D}}(y_h^n,\mathring{\varphi}) - (w_h^n,\mathring{\varphi}) = 0, \quad \forall \mathring{\varphi} \in M_h,\\
\begin{aligned}
(\delta_\tau \vec{v}_h^n ,\vec{\theta}) + a_{\mathcal{C}}(\vec{v}_h^{n-1},\vec{v}_h^{n-1},\vec{v}_h^n,\vec{\theta}) + \mu_\mathrm{s} a_\strain(\vec{v}_h^n, \vec{\theta}) \hspace*{12.7em}\\ + b_{\mathcal{P}}(p_h^n,\vec{\theta}) - b_\mathcal{I}(y_h^{n-1}+\bar{c}_0,w_h^n,\vec{\theta}) = 0, \quad \forall \vec{\theta} \in \mathbf{X}_h,
\end{aligned}\\
b_{\mathcal{P}}(\phi,\vec{v}_h^n) = 0, \quad \forall \phi \in Q_h.
\end{eqnarray}
\end{subequations}
\end{lemma}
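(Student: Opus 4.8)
The plan is to exhibit an explicit bijection between the solution set of the DG scheme \eqref{eq:CHNS:DGscheme} and that of problem \eqref{eq:CHNS:solvability:P}, so that one problem is uniquely solvable precisely when the other is. The correspondence rests on the $L^2$-orthogonal decomposition $S_h = M_h \oplus \IR$ into zero-average functions and constants, together with the observations that (i) $a_{\mathcal{D}}(\cdot,1)=a_{\mathcal{D}}(1,\cdot)=0$, (ii) $a_{\mathcal{A}}(c,\vec{v},1)=0$ by the translational invariance of $a_{\mathcal{A}}$ in its third argument, and (iii) $b_\mathcal{I}(c,\mu+k,\vec{\theta})=b_\mathcal{I}(c,\mu,\vec{\theta})$ for every constant $k$, since only $\grad\mu$ and $\jump{\mu}$ enter \eqref{eq:CHNS:DG_interface}. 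I would argue step by step in $n$, assuming that the data of the previous time level are related by $c_h^{n-1}=y_h^{n-1}+\bar{c}_0$ with $y_h^{n-1}\in M_h$; this is consistent at $n=1$ because $c_h^0=\mathcal{P}_h c^0$ and $y_h^0=\mathcal{P}_h c^0-\bar{c}_0$.

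For the forward direction, suppose $(c_h^n,\mu_h^n,\vec{v}_h^n,p_h^n)$ solves \eqref{eq:CHNS:DGscheme}. By the discrete mass conservation of \cref{thm:CHNS:discrete_mass_conservation}, $(c_h^n,1)=\bar{c}_0\abs{\Omega}$, hence $y_h^n:=c_h^n-\bar{c}_0\in M_h$; I also set $w_h^n:=\mu_h^n-\frac{1}{\abs{\Omega}}(\mu_h^n,1)\in M_h$ and keep $\vec{v}_h^n,p_h^n$ unchanged. Restricting the test functions in \eqref{eq:CHNS:DGscheme:a}--\eqref{eq:CHNS:DGscheme:b} to $M_h$ and using $\delta_\tau c_h^n=\delta_\tau y_h^n$, $a_{\mathcal{D}}(\mu_h^n,\mathring{\chi})=a_{\mathcal{D}}(w_h^n,\mathring{\chi})$, $(\mu_h^n,\mathring{\varphi})=(w_h^n,\mathring{\varphi})$, and $a_{\mathcal{D}}(c_h^n,\mathring{\varphi})=a_{\mathcal{D}}(y_h^n,\mathring{\varphi})$ reproduces the first two equations of \eqref{eq:CHNS:solvability:P}; the third follows from \eqref{eq:CHNS:DGscheme:c} because $b_\mathcal{I}(c_h^{n-1},\mu_h^n,\vec{\theta})=b_\mathcal{I}(y_h^{n-1}+\bar{c}_0,w_h^n,\vec{\theta})$ by (iii), and the fourth is unchanged.

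For the backward direction, suppose $(y_h^n,w_h^n,\vec{v}_h^n,p_h^n)$ solves \eqref{eq:CHNS:solvability:P}. I put $c_h^n:=y_h^n+\bar{c}_0$, recover the missing constant of the chemical potential by setting $\lambda_h^n:=\frac{1}{\abs{\Omega}}\big(\Phi_{+}\,\!'(c_h^n)+\Phi_{-}\,\!'(c_h^{n-1}),1\big)$ and $\mu_h^n:=w_h^n+\lambda_h^n$, and keep $\vec{v}_h^n,p_h^n$. To verify \eqref{eq:CHNS:DGscheme} for every $\chi\in S_h$, I would write $\chi=\mathring{\chi}+\bar{\chi}$ with $\mathring{\chi}\in M_h$ and $\bar{\chi}$ constant. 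The $M_h$-components give exactly \eqref{eq:CHNS:solvability:P}, while the constant component of \eqref{eq:CHNS:DGscheme:a} reduces, using (i)--(ii), to $(\delta_\tau c_h^n,1)=\frac{1}{\tau}(y_h^n-y_h^{n-1},1)=0$, which holds automatically, and the constant component of \eqref{eq:CHNS:DGscheme:b} reduces, using $a_{\mathcal{D}}(c_h^n,1)=0$, to $(\mu_h^n,1)=(\Phi_{+}\,\!'(c_h^n)+\Phi_{-}\,\!'(c_h^{n-1}),1)$, which is precisely the defining relation for $\lambda_h^n$. The momentum and incompressibility equations again transfer through (iii).

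The one point demanding care is that these two maps are mutually inverse, which is where the constant modes must be tracked precisely: the forward map discards the mean $\frac{1}{\abs{\Omega}}(\mu_h^n,1)$ of the chemical potential, and the backward map restores it as $\lambda_h^n$. Testing \eqref{eq:CHNS:DGscheme:b} with $\varphi=1$ shows $\frac{1}{\abs{\Omega}}(\mu_h^n,1)=\lambda_h^n$ for any solution of the DG scheme, so the two constants coincide and the composition in both orders is the identity. Hence the solution sets are in bijection and unique solvability of \eqref{eq:CHNS:DGscheme} is equivalent to that of \eqref{eq:CHNS:solvability:P}. I expect the only genuinely delicate step to be this bookkeeping of the constant mode of $\mu_h^n$; everything else is a direct consequence of the translational-invariance properties (i)--(iii) and of discrete mass conservation.
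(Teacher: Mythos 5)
Your proposal is correct and is in substance the same argument as the paper's: the paper factors the equivalence through an intermediate problem (the shift $c_h^n = y_h^n+\bar{c}_0$ with the chemical potential still in $S_h$ and the second equation still tested against all of $S_h$, asserted as ``easy to check''), and then transfers unique solvability by adding/subtracting the mean of $\mu_h^n$ with contradiction arguments — exactly your mean-subtraction/mean-restoration correspondence. The only difference is organizational: you merge the two steps into a single explicit bijection and verify that the two maps are mutually inverse (using that testing \eqref{eq:CHNS:DGscheme:b} with $\varphi=1$ pins the constant mode of $\mu_h^n$ to your $\lambda_h^n$), whereas the paper encodes the same fact as ``if $(y_h^n,\mu_h^n,\vec{v}_h^n,p_h^n)$ solves the auxiliary problem then $(y_h^n,\mu_h^n+C,\vec{v}_h^n,p_h^n)$ does not.''
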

\begin{proof}
It is easy to check the unique solvability of DG scheme \eqref{eq:CHNS:DGscheme} is equivalent to the unique solvability of the problem: for any $1\leq n \leq N$\,, given $y_h^{n-1} \in M_h$ and $\vec{v}_h^{n-1} \in \mathbf{X}_h$ find $(y_h^n, \mu_h^n, \vec{v}_h^{n}, p_h^n) \in M_h \times S_h \times \mathbf{X}_h \times Q_h$ such that
\begin{subequations}\label{eq:CHNS:solvability:P_aux}
\begin{eqnarray}
(\delta_\tau y_h^n,\mathring{\chi}) + a_{\mathcal{D}}(\mu_h^n,\mathring{\chi}) + a_{\mathcal{A}}(y_h^{n-1}+\bar{c}_0,\vec{v}_h^n,\mathring{\chi}) = 0, \quad \forall \mathring{\chi} \in M_h,\\
\hspace*{2.5em}
(\Phi_{+}\,\!'(y_h^n+\bar{c}_0)+\Phi_{-}\,\!'(y_h^{n-1}+\bar{c}_0),\varphi) + \kappa a_{\mathcal{D}}(y_h^n,\varphi) - (\mu_h^n,\varphi) = 0, \quad \forall \varphi \in S_h,\\
\begin{aligned}
(\delta_\tau \vec{v}_h^n ,\vec{\theta}) + a_{\mathcal{C}}(\vec{v}_h^{n-1},\vec{v}_h^{n-1},\vec{v}_h^n,\vec{\theta}) + \mu_\mathrm{s} a_\strain(\vec{v}_h^n, \vec{\theta})\hspace*{7.875em}\\ + b_{\mathcal{P}}(p_h^n,\vec{\theta}) - b_\mathcal{I}(y_h^{n-1}+\bar{c}_0,\mu_h^n,\vec{\theta}) = 0, \quad \forall \vec{\theta} \in \mathbf{X}_h,
\end{aligned}\\
b_{\mathcal{P}}(\phi,\vec{v}_h^n) = 0, \quad \forall \phi \in Q_h.
\end{eqnarray}
\end{subequations}
Thus, we only need to prove the unique solvability of \eqref{eq:CHNS:solvability:P} is equivalent to the unique solvability of \eqref{eq:CHNS:solvability:P_aux}. 
(Necessity) If \eqref{eq:CHNS:solvability:P_aux} has a solution $(y_h^n, \mu_h^n, \vec{v}_h^{n}, p_h^n)$. Define $w_h^n=\mu_h^n-\frac{1}{\abs{\Omega}}(\mu_h^n,1)$, then $(y_h^n, w_h^n, \vec{v}_h^{n}, p_h^n)$ is a solution of \cref{eq:CHNS:solvability:P}. If the solution of \eqref{eq:CHNS:solvability:P_aux} is unique. Assume $(y_h^{n,1}, w_h^{n,1},\vec{v}_h^{n,1},p_h^{n,1})$ and $(y_h^{n,2}, w_h^{n,2},\vec{v}_h^{n,2},p_h^{n,2})$ are two different solutions of \cref{eq:CHNS:solvability:P}, then 
\begin{align*}
\Big(y_h^{n,1},~w_h^{n,1}+\frac{1}{\abs{\Omega}}\big(\Phi_{+}\,\!'(y_h^{n,1}+\bar{c}_0)+\Phi_{-}\,\!'(y_h^{n-1}+\bar{c}_0), 1\big),~\vec{v}_h^{n,1},~p_h^{n,1}\Big) & \quad \text{and}\\ 
\Big(y_h^{n,2},~w_h^{n,2}+\frac{1}{\abs{\Omega}}\big(\Phi_{+}\,\!'(y_h^{n,2}+\bar{c}_0)+\Phi_{-}\,\!'(y_h^{n-1}+\bar{c}_0), 1\big),~\vec{v}_h^{n,2},~p_h^{n,2}\Big) &
\end{align*} 
are two different solutions of \cref{eq:CHNS:solvability:P_aux}. By contradiction argument, we know the solution of \eqref{eq:CHNS:solvability:P} is unique.\\
(Sufficiency) If \eqref{eq:CHNS:solvability:P} has a solution $(y_h^n,w_h^n,\vec{v}_h^n,p_h^n)$. Define $\mu_h^n = w_h^n+\frac{1}{\abs{\Omega}}\big(\Phi_{+}\,\!'(y_h^n+\bar{c}_0)+\Phi_{-}\,\!'(y_h^{n-1}+\bar{c}_0), 1\big)$, then $(y_h^n,\mu_h^n,\vec{v}_h^n,p_h^n)$ is a solution of \eqref{eq:CHNS:solvability:P_aux}. If the solution of \eqref{eq:CHNS:solvability:P} is unique. Assume $(y_h^{n,1}, \mu_h^{n,1}, \vec{v}_h^{n,1}, p_h^{n,1})$ and $(y_h^{n,2}, \mu_h^{n,2}, \vec{v}_h^{n,2}, p_h^{n,2})$ are two different solutions of \cref{eq:CHNS:solvability:P_aux}, then $\big(y_h^{n,1}, \mu_h^{n,1}-\frac{1}{\abs{\Omega}}(\mu_h^{n,1},1),\vec{v}_h^{n,1},p_h^{n,1}\big)$ and $\big(y_h^{n,2}, \mu_h^{n,2}-\frac{1}{\abs{\Omega}}(\mu_h^{n,2},1),\vec{v}_h^{n,1},p_h^{n,1}\big)$ are two different solutions of \cref{eq:CHNS:solvability:P}. This argument is valid, since if $(y_h^n, \mu_h^n, \vec{v}_h^n, p_h^n)$ is a solution of \cref{eq:CHNS:solvability:P_aux}, then $(y_h^n, \mu_h^n+C, \vec{v}_h^n, p_h^n)$ is not a solution of \cref{eq:CHNS:solvability:P_aux}, here $C$ denotes a nonzero constant.
\end{proof}
\begin{theorem}
Based on the auxiliary flow problem \eqref{eq:CHNS:solvability:aux}, the unique solvability of the DG scheme \eqref{eq:CHNS:DGscheme} is equivalent to the unique solvability of the problem \cref{CHNS:solvability:schemeB}.
\end{theorem}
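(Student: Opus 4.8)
The plan is to lean on \cref{lem:CHNS:solvability:P}, which already reduces the unique solvability of the DG scheme \eqref{eq:CHNS:DGscheme} to that of problem \eqref{eq:CHNS:solvability:P}. It therefore suffices to prove that \eqref{eq:CHNS:solvability:P} and \eqref{CHNS:solvability:schemeB} are equivalent in the sense of unique solvability. The natural strategy is to set up an explicit affine correspondence between the two solution sets: shift only the velocity and pressure unknowns by the auxiliary flow solution $(\tilde{\vec{v}}_h^n,\tilde{p}_h^n)$, while leaving $y_h^n$ and $w_h^n$ unchanged. Since $(\tilde{\vec{v}}_h^n,\tilde{p}_h^n)$ is uniquely determined by $\vec{v}_h^{n-1}$ through \cref{lem:CHNS:unique_sol_aux_flow}, and $\hat{y}_h^{n-1}$ is uniquely determined by the Riesz representation theorem, this shift is a fixed, well-defined affine map.

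Concretely, I would introduce the change of variables
\[
\vec{v}_h^n = \hat{\vec{v}}_h^n + \tilde{\vec{v}}_h^n, \qquad p_h^n = \hat{p}_h^n + \tilde{p}_h^n,
\]
keeping $y_h^n,w_h^n$ fixed, and check term by term that $(y_h^n,w_h^n,\vec{v}_h^n,p_h^n)$ solves \eqref{eq:CHNS:solvability:P} if and only if $(y_h^n,w_h^n,\hat{\vec{v}}_h^n,\hat{p}_h^n)$ solves \eqref{CHNS:solvability:schemeB}. The second equation \eqref{CHNS:solvability:schemeB:b} is literally identical in the two problems, so nothing is required there. For the momentum and incompressibility relations I would use the linearity of $a_\mathcal{C}$ in its third argument (\cref{lem:CHNS:continuityaC}), the bilinearity of $a_\strain$ and of $b_\mathcal{P}$, and then subtract the auxiliary equations \eqref{eq:CHNS:solvability:aux:a} and \eqref{eq:CHNS:solvability:aux:b}; the contributions carrying $\tilde{\vec{v}}_h^n$ and $\tilde{p}_h^n$ cancel exactly, turning the third and fourth equations of \eqref{eq:CHNS:solvability:P} into \eqref{CHNS:solvability:schemeB:c} and \eqref{CHNS:solvability:schemeB:d}, and conversely.

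The crux of the argument is matching the mass equation. Using the linearity of $a_\mathcal{A}$ in its velocity argument, I would split $a_{\mathcal{A}}(y_h^{n-1}+\bar{c}_0,\vec{v}_h^n,\mathring{\chi})$ into the $\hat{\vec{v}}_h^n$ and $\tilde{\vec{v}}_h^n$ pieces, and then invoke the defining relation of $\hat{y}_h^{n-1}$, namely $\tau\, a_{\mathcal{A}}(y_h^{n-1}+\bar{c}_0,\tilde{\vec{v}}_h^n,\mathring{\chi}) = (y_h^{n-1}-\hat{y}_h^{n-1},\mathring{\chi})$, to absorb the auxiliary advective flux into the discrete time-difference term. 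This is precisely what converts $\delta_\tau y_h^n$ into $\tfrac{1}{\tau}(y_h^n-\hat{y}_h^{n-1})$ and recovers \eqref{CHNS:solvability:schemeB:a}. I expect this bookkeeping step — and in particular recognizing that $\hat{y}_h^{n-1}$ was defined exactly to cancel the advective contribution of $\tilde{\vec{v}}_h^n$ — to be the only delicate point; every other manipulation is pure linearity of the DG forms.

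Finally, because $(\tilde{\vec{v}}_h^n,\tilde{p}_h^n)$ and $\hat{y}_h^{n-1}$ are fixed data, the map $(y_h^n,w_h^n,\vec{v}_h^n,p_h^n)\mapsto(y_h^n,w_h^n,\hat{\vec{v}}_h^n,\hat{p}_h^n)$ is an affine bijection of $M_h\times M_h\times\mathbf{X}_h\times Q_h$ onto itself, carrying solutions of \eqref{eq:CHNS:solvability:P} bijectively onto solutions of \eqref{CHNS:solvability:schemeB}. Hence existence and uniqueness transfer in both directions, and combined with \cref{lem:CHNS:solvability:P} this yields the stated equivalence of unique solvability.
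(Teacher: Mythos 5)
Your proposal is correct and follows essentially the same route as the paper: both arguments reduce to the equivalence of \eqref{eq:CHNS:solvability:P} and \eqref{CHNS:solvability:schemeB} via \cref{lem:CHNS:solvability:P}, and both transfer solutions through the translation $(\vec{v}_h^n,p_h^n)=(\hat{\vec{v}}_h^n+\tilde{\vec{v}}_h^n,\hat{p}_h^n+\tilde{p}_h^n)$ with $y_h^n,w_h^n$ unchanged, using the defining relation of $\hat{y}_h^{n-1}$ to absorb the advective contribution of $\tilde{\vec{v}}_h^n$. The only difference is presentational: you make the term-by-term cancellation and the affine-bijection framing explicit, whereas the paper states the shifted quadruple is a solution and handles uniqueness by a contradiction argument in each direction.
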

\begin{proof}
By \cref{lem:CHNS:solvability:P}, we only need to prove the unique solvability of \eqref{eq:CHNS:solvability:P} is equivalent to the unique solvability of \cref{CHNS:solvability:schemeB}. From \cref{lem:CHNS:unique_sol_aux_flow}, the auxiliary flow problem \eqref{eq:CHNS:solvability:aux} is always unconditionally unique solvable. Let $(\tilde{\vec{v}}_h^n,\tilde{p}_h^n)$ be the unique solution of \eqref{eq:CHNS:solvability:aux} and we have:\\
(Necessity) If \cref{CHNS:solvability:schemeB} has a solution $(y_h^n,w_h^n,\hat{\vec{v}}_h^n,\hat{p}_h^n)$. Then $(y_h^n,w_h^n,\hat{\vec{v}}_h^n+\tilde{\vec{v}}_h^n,\hat{p}_h^n+\tilde{p}_h^n)$ is a solution of \eqref{eq:CHNS:solvability:P}. 
If the solution of \eqref{CHNS:solvability:schemeB} is unique. Assume $(y_h^{n,1},w_h^{n,1},\vec{v}_h^{n,1},p_h^{n,1})$ and $(y_h^{n,2},w_h^{n,2},\vec{v}_h^{n,2},p_h^{n,2})$ are two different solutions of \eqref{eq:CHNS:solvability:P}. Then $(y_h^{n,1},w_h^{n,1},\vec{v}_h^{n,1}-\tilde{\vec{v}}_h^n,p_h^{n,1}-\tilde{p}_h^n)$ and $(y_h^{n,2},w_h^{n,2},\vec{v}_h^{n,2}-\tilde{\vec{v}}_h^n,p_h^{n,2}-\tilde{p}_h^n)$ are two different solutions of \eqref{CHNS:solvability:schemeB}. By contradiction argument, we know the solution of \eqref{eq:CHNS:solvability:P} is unique.\\
(Sufficiency) If \eqref{eq:CHNS:solvability:P} has a solution $(y_h^n,w_h^n,\vec{v}_h^n,p_h^n)$. Then $(y_h^n,w_h^n,\vec{v}_h^n-\tilde{\vec{v}}_h^n,p_h^n-\tilde{p}_h^n)$ is a solution of \eqref{CHNS:solvability:schemeB}.
If the solution of \eqref{eq:CHNS:solvability:P} is unique. Assume $(y_h^{n,1},w_h^{n,1},\hat{\vec{v}}_h^{n,1},\hat{p}_h^{n,1})$ and $(y_h^{n,2},w_h^{n,2},\hat{\vec{v}}_h^{n,2},\hat{p}_h^{n,2})$ are two different solutions of \eqref{CHNS:solvability:schemeB}. Then $(y_h^{n,1},w_h^{n,1},\hat{\vec{v}}_h^{n,1}+\tilde{\vec{v}}_h^n,\hat{p}_h^{n,1}+\tilde{p}_h^n)$ and $(y_h^{n,2},w_h^{n,2},\hat{\vec{v}}_h^{n,2}+\tilde{\vec{v}}_h^n,\hat{p}_h^{n,2}+\tilde{p}_h^n)$ are two different solutions of \eqref{eq:CHNS:solvability:P}. By contradiction argument, we know the solution of \eqref{CHNS:solvability:schemeB} is unique.
\end{proof}
Now we are in the position to prove \eqref{CHNS:solvability:schemeB} is uniquely solvable. The outline is let us first express $y_h^n$ and $(\vec{v}_h^n,p_h^n)$ in terms of $w_h^n$ by solving \eqref{CHNS:solvability:schemeB:b} and (\ref{CHNS:solvability:schemeB:c}--\ref{CHNS:solvability:schemeB:d}) respectively, then seek the unique existence of solution $w_h^n$.
\begin{lemma}\label{CHNS:solvability:uniq_c}
For each fixed $w_h \in M_h$\,, given $y_h^{n-1} \in M_h$ and $\bar{c}_0 \in S_h$\,, there exists a unique solution $y_h \in M_h$ satisfying
\begin{align}\label{eq:CHNS:solvability:uniq_c}
\big(\Phi_{+}\,\!'(y_h+\bar{c}_0)+\Phi_{-}\,\!'(y_h^{n-1}+\bar{c}_0),\mathring{\varphi}\big) + \kappa a_{\mathcal{D}}(y_h,\mathring{\varphi}) - (w_h,\mathring{\varphi}) = 0, && \forall \mathring{\varphi} \in M_h,
\end{align}
for any mesh size $h$, time step size $\tau$, and parameter $\kappa$.
\end{lemma}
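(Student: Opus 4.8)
The plan is to recognize \eqref{eq:CHNS:solvability:uniq_c} as the first-order optimality condition of a strictly convex, coercive energy functional on the finite-dimensional space $M_h$, and then apply the direct method of the calculus of variations. Concretely, since $y_h^{n-1}$, $\bar{c}_0$, and $w_h$ are all fixed data, I would introduce
\[
J(y_h) = \int_\Omega \Phi_{+}(y_h + \bar{c}_0) + \frac{\kappa}{2}\,a_{\mathcal{D}}(y_h,y_h) + \big(\Phi_{-}\,\!'(y_h^{n-1}+\bar{c}_0) - w_h,\, y_h\big).
\]
Because $\Phi \in \mathcal{C}^2$, the convex part $\Phi_{+}$ is $\mathcal{C}^2$, so $J$ is continuously differentiable on $M_h$; computing its G\^ateaux derivative in an arbitrary direction $\mathring{\varphi} \in M_h$ and using the symmetry and bilinearity of $a_{\mathcal{D}}$ reproduces exactly the left-hand side of \eqref{eq:CHNS:solvability:uniq_c}. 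Hence the solutions of the lemma are precisely the critical points of $J$, and it suffices to show that $J$ has a unique minimizer.

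For uniqueness I would verify strict convexity. The first term is convex since $\Phi_{+}$ is convex, its second variation being $\int_\Omega \Phi_{+}\,\!''(y_h+\bar{c}_0)\,\mathring{\varphi}^2 \geq 0$; the last term is linear; and the quadratic term is strictly convex on $M_h$ because the coercivity of $a_{\mathcal{D}}$ (valid once $\sigma$ is large) gives $\kappa\,a_{\mathcal{D}}(\mathring{\varphi},\mathring{\varphi}) \geq \kappa K_\alpha \norm{\mathring{\varphi}}{\mathrm{DG}}^2 > 0$ for every nonzero $\mathring{\varphi}$, using that $\norm{\cdot}{\mathrm{DG}}$ is a genuine norm on $M_h = S_h \cap L_0^2(\Omega)$. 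The sum is therefore strictly convex, so a minimizer, if it exists, is unique.

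The crux of the argument, and the step I expect to be the main obstacle, is coercivity of $J$, i.e.\ $J(y_h)\to+\infty$ as $\norm{y_h}{\mathrm{DG}}\to\infty$. Here I would use that a convex $\mathcal{C}^2$ function lies above its tangent at the constant $\bar{c}_0$, so $\Phi_{+}(y_h+\bar{c}_0) \geq \Phi_{+}(\bar{c}_0) + \Phi_{+}\,\!'(\bar{c}_0)\,y_h$ pointwise; integrating and using that $y_h\in M_h$ has zero mean makes the first term bounded below by the constant $\abs{\Omega}\Phi_{+}(\bar{c}_0)$, independent of $y_h$. The quadratic term contributes $\tfrac{\kappa K_\alpha}{2}\norm{y_h}{\mathrm{DG}}^2$, while the linear term is controlled by Cauchy--Schwarz together with Poincar\'e's inequality, giving a bound of the form $C\,C_P\norm{y_h}{\mathrm{DG}}$ (note $\Phi_{-}\,\!'(y_h^{n-1}+\bar{c}_0)$ is bounded, hence in $L^2(\Omega)$, since $y_h^{n-1}$ is a piecewise polynomial and $\Phi_{-}\,\!'$ is continuous). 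Combining, $J(y_h) \geq \abs{\Omega}\Phi_{+}(\bar{c}_0) + \tfrac{\kappa K_\alpha}{2}\norm{y_h}{\mathrm{DG}}^2 - C\,C_P\norm{y_h}{\mathrm{DG}}$, which tends to $+\infty$. Since $M_h$ is finite-dimensional, the continuous coercive functional $J$ has bounded sublevel sets and thus attains its minimum, which by strict convexity is unique and, by differentiability, is the sought solution of \eqref{eq:CHNS:solvability:uniq_c}. As the estimates use only $\kappa>0$, the $h$-independent coercivity of $a_{\mathcal{D}}$, and boundedness of the data, the conclusion holds for all $h$, $\tau$, and $\kappa$ (indeed $\tau$ does not appear in the equation).
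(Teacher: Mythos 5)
Your proof is correct, but it takes a genuinely different route from the paper's. The paper proves existence by a topological argument: it defines a map $\mathcal{F}:M_h\rightarrow M_h$ through the Riesz representation theorem, shows via the Taylor expansion $\Phi_+\,\!'(y_h+\bar{c}_0)=\Phi_+\,\!'(\bar{c}_0)+\Phi_+\,\!''(\xi_h)\,y_h$ and the zero-mean property of $M_h$ that $\big(\mathcal{F}(y_h),y_h\big)\geq 0$ on a sufficiently large sphere, and then invokes Brouwer's fixed point theorem (in its acute-angle form); uniqueness is proved separately by subtracting two solutions, testing with their difference, and combining the coercivity of $a_{\mathcal{D}}$ with the monotonicity of $\Phi_+\,\!'$. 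You instead identify the equation as the Euler--Lagrange condition of the functional $J$ and run the direct method of the calculus of variations: coercivity plus finite dimensionality give a minimizer, and strict convexity gives uniqueness of critical points (it is worth stating explicitly the standard fact that every critical point of a convex differentiable functional is a global minimizer, which is what lets you pass from ``unique minimizer'' to ``unique solution''). In substance the two proofs use the same structural ingredients --- convexity of $\Phi_+$, coercivity of $a_{\mathcal{D}}$, the cancellation $\big(\Phi_+\,\!'(\bar{c}_0),y_h\big)=0$ for zero-mean $y_h$ (your tangent-line bound), and Poincar\'e's inequality --- and your strict convexity is exactly the paper's monotonicity, one derivative down. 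What each approach buys: yours is more elementary, needing no fixed-point theorem, only compactness of sublevel sets in finite dimensions; but it hinges on the symmetry of $a_{\mathcal{D}}$, since only for a symmetric form is $\kappa\, a_{\mathcal{D}}(y_h,\cdot)$ the G\^ateaux derivative of $\frac{\kappa}{2}a_{\mathcal{D}}(y_h,y_h)$. The paper's operator-theoretic argument never constructs a potential, and this is precisely why the authors can remark that their unique-solvability analysis extends verbatim to non-symmetric (NIPG-type) versions of the scheme --- a generalization your variational proof cannot deliver, because for a non-symmetric $a_{\mathcal{D}}$ the discrete equation is no longer an Euler--Lagrange equation.
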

\begin{proof}
We first prove the existence of a solution. For each fixed $w_h \in M_h$\,, define the mapping $\mathcal{F}:~M_h \rightarrow M_h$ by
\begin{multline*}
\big(\mathcal{F}(y_h),\mathring{\varphi}\big) = \big(\Phi_{+}\,\!'(y_h+\bar{c}_0)+\Phi_{-}\,\!'(y_h^{n-1}+\bar{c}_0),\mathring{\varphi}\big)\\ + \kappa a_{\mathcal{D}}(y_h,\mathring{\varphi}) - (w_h,\mathring{\varphi}), \quad
\forall y_h,~\mathring{\varphi} \in M_h.
\end{multline*}
The fact that $\mathcal{F}$ is well defined is guaranteed by the Riesz representation theorem. Taking the Taylor expansion of $\Phi_{+}\,\!'(y_h+\bar{c}_0)$ at $\bar{c}_0$ to first order, there exists $\xi_h$ between $\bar{c}_0$ and $y_h+\bar{c}_0$, such that
\begin{equation*}
\Phi_{+}\,\!'(y_h+\bar{c}_0) = \Phi_{+}\,\!'(\bar{c}_0) + \Phi_{+}\,\!''(\xi_h)y_h.
\end{equation*}
Considering $y_h \in M_h$, and the fact that $\Phi_{+}$\,, the convex part of $\Phi$, satisfies $\Phi_{+}\,\!''\geq0$, we obtain the following inequality
\begin{equation}\label{eq:CHNS:Phi_Taylor2}
\big(\Phi_{+}\,\!'(y_h+\bar{c}_0),y_h\big) = \big(\Phi_{+}\,\!'(\bar{c}_0),y_h\big) + \big(\Phi_{+}\,\!''(\xi_h),y_h^2\big) = \big(\Phi_{+}\,\!''(\xi_h),y_h^2\big) \geq 0.
\end{equation}
We next turn to derive a lower bound of $\big(\mathcal{F}(y_h),y_h\big)$. Applying Cauchy--Schwarz's inequality, Young's inequality, and Poincar\'e's inequality, we have
\begin{align*}
&- \big(\Phi_{-}\,\!'(y_h^{n-1}+\bar{c}_0),y_h\big) + (w_h,y_h)\\
\leq& \norm{\Phi_{-}\,\!'(y_h^{n-1}+\bar{c}_0)}{L^2(\Omega)}\norm{y_h}{L^2(\Omega)} + \norm{w_h}{L^2(\Omega)}\norm{y_h}{L^2(\Omega)}\\
\leq& \frac{C_P^2}{K_\alpha\kappa}\norm{\Phi_{-}\,\!'(y_h^{n-1}+\bar{c}_0)}{L^2(\Omega)}^2 + \frac{K_\alpha\kappa}{4C_P^2}\norm{y_h}{L^2(\Omega)}^2 + \frac{C_P^2}{K_\alpha\kappa}\norm{w_h}{L^2(\Omega)}^2 + \frac{K_\alpha\kappa}{4C_P^2}\norm{y_h}{L^2(\Omega)}^2 \\
\leq& \frac{K_\alpha\kappa}{2}\norm{y_h}{\mathrm{DG}}^2 + \frac{C_P^2}{K_\alpha\kappa}\Big(\norm{\Phi_{-}\,\!'(y_h^{n-1}+\bar{c}_0)}{L^2(\Omega)}^2 + \norm{w_h}{L^2(\Omega)}^2\Big).
\end{align*}
Combining this result with \eqref{eq:CHNS:Phi_Taylor2} and using the coercivity of $a_{\mathcal{D}}$, we 
obtain
\begin{equation*}
\big(\mathcal{F}(y_h),y_h\big) \geq \frac{K_\alpha\kappa}{2} \norm{y_h}{\mathrm{DG}}^2 
- \frac{C_P^2}{K_\alpha\kappa}\Big(\norm{\Phi_{-}\,\!'(y_h^{n-1}+\bar{c}_0)}{L^2(\Omega)}^2 + \norm{w_h}{L^2(\Omega)}^2\Big).
\end{equation*}
Define the sphere $\Xi$ in $M_h$ as follows
\begin{equation*}
\Xi = \Big\{y_h \in M_h:~ \norm{y_h}{\mathrm{DG}}^2 = \frac{2C_P^2}{K_\alpha^2\kappa^2} \Big(\norm{\Phi_{-}\,\!'(y_h^{n-1}+\bar{c}_0)}{L^2(\Omega)}^2 + \norm{w_h}{L^2(\Omega)}^2\Big) \Big\}.
\end{equation*}
We have $\big(\mathcal{F}(y_h),y_h\big)\geq0$ for any $y_h \in \Xi$. By Brouwer's fixed point theorem, there exists a function $y_h \in M_h$ such that $\mathcal{F}(y_h) = 0$. In particular $\big(\mathcal{F}(y_h),\mathring{\varphi}\big) = 0$ for all $\mathring{\varphi} \in M_h$, i.\,e., the function $y_h$ is a solution of \eqref{eq:CHNS:solvability:uniq_c}. 
Next, let us prove the solution of \eqref{eq:CHNS:solvability:uniq_c} is unique. Assume $y_h \in M_h$ and $\tilde{y}_h \in M_h$ are two solutions of \eqref{eq:CHNS:solvability:uniq_c}, then
\begin{align*}
\big(\Phi_{+}\,\!'(y_h+\bar{c}_0)+\Phi_{-}\,\!'(y_h^{n-1}+\bar{c}_0),\,\mathring{\varphi}\big) + \kappa a_{\mathcal{D}}(y_h,\mathring{\varphi}) - (w_h,\mathring{\varphi}) &= 0,\\
\big(\Phi_{+}\,\!'(\tilde{y}_h+\bar{c}_0)+\Phi_{-}\,\!'(y_h^{n-1}+\bar{c}_0),\,\mathring{\varphi}\big) + \kappa a_{\mathcal{D}}(\tilde{y}_h,\mathring{\varphi}) - (w_h,\mathring{\varphi}) &= 0.
\end{align*}
Subtracting above two equations, taking $\mathring{\varphi} = y_h-\tilde{y}_h \in M_h$\,, by the coercivity of $a_{\mathcal{D}}$, we have
\begin{equation*}
K_\alpha\kappa\norm{y_h-\tilde{y}_h}{\mathrm{DG}}^2
\leq \kappa a_{\mathcal{D}}(y_h-\tilde{y}_h,y_h-\tilde{y}_h) \\
= -\big(\Phi_{+}\,\!'(y_h+\bar{c}_0)-\Phi_{+}\,\!'(\tilde{y}_h+\bar{c}_0),y_h-\tilde{y}_h\big).
\end{equation*}
Since $\Phi_{+}$ is convex, then $\Phi_{+}\,\!'$ is non-decreasing, hence we have
\begin{equation*}
\big(\Phi_{+}\,\!'(y_h+\bar{c}_0)-\Phi_{+}\,\!'(\tilde{y}_h+\bar{c}_0)\big) (y_h-\tilde{y}_h) \geq 0.
\end{equation*}
Therefore, we have $\norm{y_h-\tilde{y}_h}{\mathrm{DG}}^2 \leq 0$, which means $\norm{y_h-\tilde{y}_h}{\mathrm{DG}} = 0$. Due to the fact that $\norm{\cdot}{\mathrm{DG}}$ is a norm in $M_h$, we obtain $y_h=\tilde{y}_h$, i.\,e., the solution of \eqref{eq:CHNS:solvability:uniq_c} is unique.
\end{proof}
\begin{lemma}\label{CHNS:solvability:uniq_v_and_p}
For each fixed $w_h \in M_h$\,, given $(y_h^{n-1},\vec{v}_h^{n-1}) \in M_h \times \mathbf{X}_h$ and $\bar{c}_0 \in S_h$\,, there exists a unique solution $(\vec{v}_h, p_h) \in \mathbf{X}_h \times Q_h$ satisfying
\begin{subequations}\label{eq:CHNS:solvability:uniq_v_and_p}
\begin{eqnarray}
\begin{aligned}
\frac{1}{\tau}(\vec{v}_h ,\vec{\theta}) + a_{\mathcal{C}}(\vec{v}_h^{n-1},\vec{v}_h^{n-1},\vec{v}_h,\vec{\theta}) + \mu_\mathrm{s} a_\strain(\vec{v}_h, \vec{\theta}) \hspace*{10em}\\ + b_{\mathcal{P}}(p_h,\vec{\theta}) - b_\mathcal{I}(y_h^{n-1}+\bar{c}_0,w_h,\vec{\theta}) = 0, \quad \forall \vec{\theta} \in \mathbf{X}_h,
\end{aligned}\\
b_{\mathcal{P}}(\phi,\vec{v}_h) = 0, \quad \forall \phi \in Q_h,
\end{eqnarray}
\end{subequations}
for any mesh size $h$, time step size $\tau$, and parameter $\mu_\mathrm{s}$\,.
\end{lemma}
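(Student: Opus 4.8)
The plan is to treat \eqref{eq:CHNS:solvability:uniq_v_and_p} as a linear generalized Stokes saddle-point problem and to reuse, almost verbatim, the strategy of \cref{lem:CHNS:unique_sol_aux_flow}. Since $w_h$, $y_h^{n-1}$, $\vec{v}_h^{n-1}$, and $\bar{c}_0$ are all fixed data, the map $\vec{\theta}\mapsto b_\mathcal{I}(y_h^{n-1}+\bar{c}_0,w_h,\vec{\theta})$ is a fixed bounded linear functional on $\mathbf{X}_h$, while the remaining terms are bilinear in the unknowns $(\vec{v}_h,p_h)$ and the test function $\vec{\theta}$. Because the system is linear and posed on finite-dimensional spaces, existence is equivalent to uniqueness, and it suffices to establish coercivity of the velocity block on the discretely divergence-free subspace together with the inf-sup condition; the conclusion then follows from standard saddle-point theory.

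First I would restrict to $\mathbf{V}_h$. For $\vec{\theta}\in\mathbf{V}_h$ the pressure term $b_\mathcal{P}(p_h,\vec{\theta})$ vanishes, so the first equation reduces to finding $\vec{v}_h\in\mathbf{V}_h$ with
\[
\frac{1}{\tau}(\vec{v}_h,\vec{\theta}) + a_{\mathcal{C}}(\vec{v}_h^{n-1},\vec{v}_h^{n-1},\vec{v}_h,\vec{\theta}) + \mu_\mathrm{s}\, a_\strain(\vec{v}_h,\vec{\theta}) = b_\mathcal{I}(y_h^{n-1}+\bar{c}_0,w_h,\vec{\theta}), \quad \forall\vec{\theta}\in\mathbf{V}_h.
\]
Denote the left-hand bilinear form by $\mathcal{B}(\vec{v}_h,\vec{\theta})$. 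Testing with $\vec{\theta}=\vec{v}_h$, the positivity of $a_\mathcal{C}$ (\cref{lem:CHNS:positivity_convection}, applied with the frozen first two arguments $\vec{v}_h^{n-1}$) gives $a_{\mathcal{C}}(\vec{v}_h^{n-1},\vec{v}_h^{n-1},\vec{v}_h,\vec{v}_h)\geq 0$, and the coercivity of $a_\strain$ (\cref{lem:astraincoer}) gives $\mu_\mathrm{s}\, a_\strain(\vec{v}_h,\vec{v}_h)\geq \mu_\mathrm{s} K_\strain\norm{\vec{v}_h}{\mathrm{DG}}^2$. Dropping the nonnegative term $\frac{1}{\tau}\norm{\vec{v}_h}{L^2(\Omega)}^2$ then yields $\mathcal{B}(\vec{v}_h,\vec{v}_h)\geq \mu_\mathrm{s} K_\strain\norm{\vec{v}_h}{\mathrm{DG}}^2$. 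Hence $\mathcal{B}$ is coercive on $\mathbf{V}_h$, and since $\mathbf{V}_h$ is finite-dimensional this proves the existence and uniqueness of $\vec{v}_h\in\mathbf{V}_h$ (equivalently, the homogeneous problem admits only the trivial solution).

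Next I would recover the pressure. Define the residual functional $\ell(\vec{\theta}) = b_\mathcal{I}(y_h^{n-1}+\bar{c}_0,w_h,\vec{\theta}) - \frac{1}{\tau}(\vec{v}_h,\vec{\theta}) - a_{\mathcal{C}}(\vec{v}_h^{n-1},\vec{v}_h^{n-1},\vec{v}_h,\vec{\theta}) - \mu_\mathrm{s}\, a_\strain(\vec{v}_h,\vec{\theta})$ on $\mathbf{X}_h$. By construction $\ell$ vanishes on $\mathbf{V}_h$, i.\,e.\ $\ell$ lies in the annihilator of $\mathbf{V}_h$. The inf-sup condition (\cref{lem:infsup}) ensures that the operator associated with $b_\mathcal{P}$ maps $Q_h$ isomorphically onto this annihilator, so there is a unique $p_h\in Q_h$ satisfying $b_\mathcal{P}(p_h,\vec{\theta})=\ell(\vec{\theta})$ for all $\vec{\theta}\in\mathbf{X}_h$; rearranged, this is precisely the first equation of \eqref{eq:CHNS:solvability:uniq_v_and_p}. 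Uniqueness of $p_h$ also follows directly from the inf-sup bound $\beta\norm{p_h}{L^2(\Omega)}\leq \sup_{\vec{\theta}\in\mathbf{X}_h} b_\mathcal{P}(p_h,\vec{\theta})/\norm{\vec{\theta}}{\mathrm{DG}}$. Combining the two steps, $(\vec{v}_h,p_h)$ solves \eqref{eq:CHNS:solvability:uniq_v_and_p} and both components are unique.

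The argument is essentially routine given the operator properties collected in \cref{sec:CHNS:preliminaries}; the only point requiring genuine care is confirming that the frozen convection term does not spoil coercivity, which is exactly what the positivity property of \cref{lem:CHNS:positivity_convection} secures. The extra forcing $b_\mathcal{I}$, relative to the auxiliary problem \eqref{eq:CHNS:solvability:aux}, is harmless since it only alters the right-hand side linear functional and plays no role in the coercivity or inf-sup estimates. I therefore anticipate no obstacle beyond careful bookkeeping of the saddle-point decomposition.
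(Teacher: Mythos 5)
Your proposal is correct and follows exactly the route the paper intends: the paper's own ``proof'' is simply the remark that the argument for \cref{CHNS:solvability:uniq_v_and_p} mirrors that of \cref{lem:CHNS:unique_sol_aux_flow}, namely restriction to $\mathbf{V}_h$, coercivity from the positivity of $a_\mathcal{C}$ and the coercivity of $a_\strain$, uniqueness implying existence by linearity and finite dimensionality, and pressure recovery via the inf-sup condition of \cref{lem:infsup}. Your treatment of the fixed forcing term $b_\mathcal{I}(y_h^{n-1}+\bar{c}_0,w_h,\cdot)$ as a right-hand-side linear functional is precisely the (minor) adaptation the paper leaves implicit.
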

The technique of proving \cref{CHNS:solvability:uniq_v_and_p} is similar to deal with the unique solvability of the auxiliary flow problem \eqref{eq:CHNS:solvability:aux}. With the help of \cref{CHNS:solvability:uniq_c} and \cref{CHNS:solvability:uniq_v_and_p}, we can establish the unconditionally unique solvability of our DG scheme \eqref{eq:CHNS:DGscheme} by invoking the Minty--Browder theorem \cite{ciarlet2013linear}.  
\begin{lemma}
The scheme \eqref{CHNS:solvability:schemeB} is uniquely solvable for any mesh size $h$, time step size $\tau$, parameter $\kappa$, and parameter $\mu_\mathrm{s}$\,.
\end{lemma}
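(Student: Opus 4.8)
The plan is to decouple the system \eqref{CHNS:solvability:schemeB} by eliminating $y_h^n$ and $(\hat{\vec v}_h^n,\hat p_h^n)$ in favour of $w_h^n$, thereby reducing the whole scheme to a single nonlinear equation posed on $M_h$, which I will then solve via the Minty--Browder theorem. Concretely, \cref{CHNS:solvability:uniq_c} shows that for each $w_h\in M_h$ the Cahn--Hilliard block \eqref{CHNS:solvability:schemeB:b} determines a unique $y_h=Y(w_h)\in M_h$, while \cref{CHNS:solvability:uniq_v_and_p} shows that the flow block (\ref{CHNS:solvability:schemeB:c}--\ref{CHNS:solvability:schemeB:d}) determines a unique pair $(\hat{\vec v}_h,\hat p_h)$ with $\hat{\vec v}_h=V(w_h)$. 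Because the only $w_h$-dependence in \eqref{CHNS:solvability:schemeB:c} is the linear term $-b_\mathcal I(y_h^{n-1}+\bar{c}_0,w_h,\cdot)$ and the right-hand side vanishes when $w_h=0$, the map $V$ is linear with $V(0)=\vec{0}$. Substituting $Y(w_h)$ and $V(w_h)$ into \eqref{CHNS:solvability:schemeB:a} and using the Riesz representation theorem, I define $\mathcal G:M_h\to M_h$ by
\[
(\mathcal G(w_h),\mathring\chi)=\frac{1}{\tau}\big(Y(w_h)-\hat y_h^{n-1},\mathring\chi\big)+a_\mathcal D(w_h,\mathring\chi)+a_\mathcal A(y_h^{n-1}+\bar{c}_0,V(w_h),\mathring\chi),\qquad\forall\mathring\chi\in M_h,
\]
so that solving \eqref{CHNS:solvability:schemeB} is equivalent to finding $w_h\in M_h$ with $\mathcal G(w_h)=0$.

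The heart of the argument, and the step I expect to be the main obstacle, is establishing strong monotonicity of $\mathcal G$, because the advection contribution $a_\mathcal A(y_h^{n-1}+\bar{c}_0,V(w_h),\cdot)$ carries no a priori sign. Given $w_h^1,w_h^2\in M_h$, set $\delta w=w_h^1-w_h^2$, $\delta y=Y(w_h^1)-Y(w_h^2)$ and $\delta\vec v=V(w_h^1)-V(w_h^2)\in\mathbf V_h$. Subtracting the two copies of \eqref{CHNS:solvability:schemeB:b} and testing with $\delta y$, the monotonicity of $\Phi_{+}\,\!'$ (convexity of $\Phi_+$) together with the coercivity of $a_\mathcal D$ yield $(\delta w,\delta y)\geq\kappa K_\alpha\norm{\delta y}{\mathrm{DG}}^2\geq0$, and the same inequality combined with Poincar\'e's inequality shows $\norm{\delta y}{\mathrm{DG}}\leq(C_P/\kappa K_\alpha)\norm{\delta w}{L^2(\Omega)}$, so that $Y$ is Lipschitz. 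Subtracting the two copies of \eqref{CHNS:solvability:schemeB:c}, testing with $\delta\vec v$ and using that $\delta\vec v\in\mathbf V_h$ makes the pressure term $b_\mathcal P$ vanish, the positivity of $a_\mathcal C$ (\cref{lem:CHNS:positivity_convection}) and the coercivity of $a_\strain$ (\cref{lem:astraincoer}) give
\[
b_\mathcal I(y_h^{n-1}+\bar{c}_0,\delta w,\delta\vec v)=\frac{1}{\tau}\norm{\delta\vec v}{L^2(\Omega)}^2+a_\mathcal C(\vec v_h^{n-1},\vec v_h^{n-1},\delta\vec v,\delta\vec v)+\mu_\mathrm{s}\,a_\strain(\delta\vec v,\delta\vec v)\geq0.
\]
I now invoke the design identity $a_\mathcal A(c,\vec v,\mu)=b_\mathcal I(c,\mu,\vec v)$ of \cref{rem:CHNS:relation_aA_bI}, which converts the indefinite advection cross-term into exactly this non-negative velocity energy, namely $a_\mathcal A(y_h^{n-1}+\bar{c}_0,\delta\vec v,\delta w)=b_\mathcal I(y_h^{n-1}+\bar{c}_0,\delta w,\delta\vec v)$. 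Collecting the three contributions,
\[
(\mathcal G(w_h^1)-\mathcal G(w_h^2),\delta w)=\frac{1}{\tau}(\delta y,\delta w)+a_\mathcal D(\delta w,\delta w)+\frac{1}{\tau}\norm{\delta\vec v}{L^2(\Omega)}^2+a_\mathcal C(\vec v_h^{n-1},\vec v_h^{n-1},\delta\vec v,\delta\vec v)+\mu_\mathrm{s}\,a_\strain(\delta\vec v,\delta\vec v)\geq K_\alpha\norm{\delta w}{\mathrm{DG}}^2,
\]
since every term is non-negative and $a_\mathcal D(\delta w,\delta w)\geq K_\alpha\norm{\delta w}{\mathrm{DG}}^2$. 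This is precisely where the deliberate construction of the scheme pays off: without the identity $a_\mathcal A=b_\mathcal I$ the cross-term could destroy monotonicity, and the auxiliary flow problem was introduced exactly so that $V$ is linear with $V(\vec{0})=\vec{0}$.

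Strong monotonicity already delivers uniqueness of $w_h$. For the remaining Minty--Browder hypotheses, continuity of $\mathcal G$ follows from the linearity of $V$ and the Lipschitz continuity of $Y$ established above, and coercivity follows by taking $w_h^2=\vec{0}$ in the monotonicity estimate and using Poincar\'e's inequality:
\[
(\mathcal G(w_h),w_h)\geq K_\alpha\norm{w_h}{\mathrm{DG}}^2+(\mathcal G(\vec{0}),w_h)\geq K_\alpha\norm{w_h}{\mathrm{DG}}^2-C_P\norm{\mathcal G(\vec{0})}{L^2(\Omega)}\norm{w_h}{\mathrm{DG}},
\]
so that $(\mathcal G(w_h),w_h)/\norm{w_h}{\mathrm{DG}}\to\infty$ as $\norm{w_h}{\mathrm{DG}}\to\infty$. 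By the Minty--Browder theorem $\mathcal G$ is a bijection of $M_h$, whence there is a unique $w_h^n$ with $\mathcal G(w_h^n)=0$; then $y_h^n=Y(w_h^n)$ and $\hat{\vec v}_h^n=V(w_h^n)$ are uniquely determined, and the discrete pressure $\hat p_h^n$ is uniquely recovered from the inf--sup condition (\cref{lem:infsup}). This establishes unique solvability of \eqref{CHNS:solvability:schemeB} for any mesh size $h$, time step size $\tau$, and parameters $\kappa$ and $\mu_\mathrm{s}$.
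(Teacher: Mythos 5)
Your proposal is correct and follows essentially the same route as the paper: decouple the scheme via \cref{CHNS:solvability:uniq_c} and \cref{CHNS:solvability:uniq_v_and_p}, reduce to a single nonlinear operator $\mathcal{G}$ on $M_h$, use the design identity $a_{\mathcal{A}}(c,\vec{v},\mu)=b_\mathcal{I}(c,\mu,\vec{v})$ of \cref{rem:CHNS:relation_aA_bI} to convert the advection cross-term into nonnegative velocity energy, and conclude by Minty--Browder with the same strong monotonicity bound $\geq K_\alpha\norm{w_h^1-w_h^2}{\mathrm{DG}}^2$ (the paper's differs only by a factor of $\tau$ from its scaling of $\mathcal{G}$). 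Your minor streamlinings---deriving coercivity directly from strong monotonicity at $w_h=0$, and getting continuity from linearity of $V$ and Lipschitz continuity of $Y$ in finite dimensions rather than the paper's explicit bounds---are sound and do not change the substance of the argument.
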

\begin{proof}
For any $w_h \in M_h$, let $y_h$ and $(\vec{v}_h,p_h)$ be the unique solutions of \eqref{CHNS:solvability:schemeB:b} and (\ref{CHNS:solvability:schemeB:c}--\ref{CHNS:solvability:schemeB:d}) which are defined in \cref{CHNS:solvability:uniq_c} and \cref{CHNS:solvability:uniq_v_and_p}, respectively. Define an operator $\mathcal{G}:~M_h\rightarrow M_h'$ (the dual space of $M_h$) as follows
\begin{align*}
\langle\mathcal{G}(w_h),\mathring{\chi}\rangle = (y_h-\hat{y}_h^{n-1},\mathring{\chi}) + \tau a_{\mathcal{D}}(w_h,\mathring{\chi}) + \tau a_{\mathcal{A}}(y_h^{n-1}+\bar{c}_0,\vec{v}_h,\mathring{\chi}), && \forall \mathring{\chi} \in M_h.
\end{align*}
Let us first check the boundedness of $\mathcal{G}$. By triangle inequality, Cauchy--Schwarz's inequality, Poincar\'e's inequality, and the continuity of $a_{\mathcal{D}}$, we have
\begin{align*}
\abs{\langle\mathcal{G}(w_h),\mathring{\chi}\rangle} 
\leq& \norm{y_h}{L^2(\Omega)}\norm{\mathring{\chi}}{L^2(\Omega)} + \norm{\hat{y}_h^{n-1}}{L^2(\Omega)}\norm{\mathring{\chi}}{L^2(\Omega)} \\ &+ \tau \abs{a_{\mathcal{D}}(w_h,\mathring{\chi})} + \tau \abs{a_{\mathcal{A}}(y_h^{n-1}+\bar{c}_0,\vec{v}_h,\mathring{\chi})}\\
\leq& C_P^2\norm{y_h}{\mathrm{DG}}\norm{\mathring{\chi}}{\mathrm{DG}} + C_P\norm{\hat{y}_h^{n-1}}{L^2(\Omega)}\norm{\mathring{\chi}}{\mathrm{DG}}\\ &+ C_\alpha\tau \norm{w_h}{\mathrm{DG}}\norm{\mathring{\chi}}{\mathrm{DG}} + \tau \abs{a_{\mathcal{A}}(y_h^{n-1}+\bar{c}_0,\vec{v}_h,\mathring{\chi})}.
\end{align*}
For the last term in above inequality, considering $y_h^{n-1} \in M_h$, the boundedness of $a_{\mathcal{A}}$ implies
\begin{align}\label{eq:CHNS:solvability_bound_aA}
\begin{split}
&\abs{a_{\mathcal{A}}\big(y_h^{n-1}+\bar{c}_0,\vec{v}_h,\mathring{\chi}\big)} \\
\leq& C_\gamma \Big(\norm{y_h^{n-1}+\bar{c}_0}{\mathrm{DG}} + \abs{\int_{\Omega} (y_h^{n-1}+\bar{c}_0)}\Big)\norm{\vec{v}_h}{\mathrm{DG}}\norm{\mathring{\chi}}{\mathrm{DG}} \\
=& C_\gamma(\norm{y_h^{n-1}+\bar{c}_0}{\mathrm{DG}} + \abs{\Omega}\abs{\bar{c}_0}) \norm{\vec{v}_h}{\mathrm{DG}}\norm{\mathring{\chi}}{\mathrm{DG}},
\end{split}
\end{align}
which means, for any $\mathring{\chi} \in M_h$ with $\norm{\mathring{\chi}}{\mathrm{DG}}=1$, we have
\begin{multline}\label{CHNS:eq:solvability_MB_boundedness1}
\abs{\langle\mathcal{G}(w_h),\mathring{\chi}\rangle} 
\leq C_\alpha\tau \norm{w_h}{\mathrm{DG}} + C_P^2\norm{y_h}{\mathrm{DG}} \\+ C_\gamma\tau (\norm{y_h^{n-1}+\bar{c}_0}{\mathrm{DG}}+\abs{\Omega}\abs{\bar{c}_0})\norm{\vec{v}_h}{\mathrm{DG}} + C_P\norm{\hat{y}_h^{n-1}}{L^2(\Omega)}.
\end{multline}
Our next step is to bound $\norm{y_h}{\mathrm{DG}}$ and $\norm{\vec{v}_h}{\mathrm{DG}}$ by $\norm{w_h}{\mathrm{DG}}$. Since $y_h=y_h(w_h) \in M_h$ is the unique solution of \eqref{eq:CHNS:solvability:uniq_c} which is defined in \cref{CHNS:solvability:uniq_c}, take $\mathring{\varphi}=y_h$ then
\begin{equation}\label{eq:CHNS:solvability:uniq_c_1}
\big(\Phi_{+}\,\!'(y_h+\bar{c}_0)+\Phi_{-}\,\!'(y_h^{n-1}+\bar{c}_0),y_h\big) + \kappa a_{\mathcal{D}}(y_h,y_h) - (w_h,y_h) = 0.
\end{equation}
Recall the nonnegativity of $\big(\Phi_{+}\,\!'(y_h+\bar{c}_0),y_h\big)$ in \eqref{eq:CHNS:Phi_Taylor2}. By the coercivity of $a_{\mathcal{D}}$, Cauchy--Schwarz's inequality and Poincar\'e's inequality, we have
\begin{align*}
K_\alpha\kappa \norm{y_h}{\mathrm{DG}}^2 
\leq& \big(\Phi_{+}\,\!'(y_h+\bar{c}_0),y_h\big) + \kappa a_{\mathcal{D}}(y_h,y_h)\\
=& (w_h,y_h) - (\Phi_{-}\,\!'(y_h^{n-1}+\bar{c}_0),y_h\big)\\
\leq& \norm{w_h}{L^2(\Omega)}\norm{y_h}{L^2(\Omega)} + \norm{\Phi_{-}\,\!'(y_h^{n-1}+\bar{c}_0)}{L^2(\Omega)}\norm{y_h}{L^2(\Omega)}\\
\leq& C_P^2\norm{w_h}{\mathrm{DG}}\norm{y_h}{\mathrm{DG}} + C_P\norm{\Phi_{-}\,\!'(y_h^{n-1}+\bar{c}_0)}{L^2(\Omega)}\norm{y_h}{\mathrm{DG}}.
\end{align*}
Therefore, we obtain the following bound 
\begin{equation}\label{CHNS:eq:solvability_MB_boundedness2}
\norm{y_h}{\mathrm{DG}} \leq \frac{C_P^2}{K_\alpha\kappa}\norm{w_h}{\mathrm{DG}} + \frac{C_P}{K_\alpha\kappa}\norm{\Phi_{-}\,\!'(y_h^{n-1}+\bar{c}_0)}{L^2(\Omega)}.
\end{equation}
Since $(\vec{v}_h,p_h)=\big(\vec{v}_h(w_h),p_h(w_h)\big)$ is the unique solution of \eqref{eq:CHNS:solvability:uniq_v_and_p} which is defined in \cref{CHNS:solvability:uniq_v_and_p}, take $\vec{\theta}=\vec{v}_h$ we have
\[
\frac{1}{\tau}(\vec{v}_h ,\vec{v}_h) + a_{\mathcal{C}}(\vec{v}_h^{n-1},\vec{v}_h^{n-1},\vec{v}_h,\vec{v}_h) + \mu_\mathrm{s} a_\strain(\vec{v}_h, \vec{v}_h)
 - b_\mathcal{I}(y_h^{n-1}+\bar{c}_0,w_h,\vec{v}_h) = 0.
\]
Recall the definition of DG forms $a_{\mathcal{A}}$ and $b_\mathcal{I}$ in \eqref{eq:CHNS:DG_forms}. By the positivity of $a_{\mathcal{C}}$, the coercivity of $a_\strain$, and considering $\vec{v}_h$ is discrete divergence-free, we obtain
\begin{align}\label{eq:CHNS:solvability:a_A}
\begin{split}
&a_{\mathcal{A}}\big(y_h^{n-1}+\bar{c}_0,\vec{v}_h,w_h\big) = b_\mathcal{I}\big(y_h^{n-1}+\bar{c}_0,w_h,\vec{v}_h\big) \\
=& \frac{1}{\tau}(\vec{v}_h,\vec{v}_h) + a_{\mathcal{C}}(\vec{v}_h^{n-1},\vec{v}_h^{n-1},\vec{v}_h,\vec{v}_h) + \mu_\mathrm{s} a_\strain(\vec{v}_h, \vec{v}_h) \\
\geq& K_\strain\mu_\mathrm{s} \norm{\vec{v}_h}{\mathrm{DG}}^2 \geq 0.
\end{split}
\end{align} 
Taking $\mathring{\chi} = w_h$ in \eqref{eq:CHNS:solvability_bound_aA} and combining the result with \eqref{eq:CHNS:solvability:a_A}, we obtain the following bound
\begin{equation}\label{CHNS:eq:solvability_MB_boundedness3}
\norm{\vec{v}_h}{\mathrm{DG}} \leq \frac{C_\gamma}{K_\strain\mu_\mathrm{s}}(\norm{y_h^{n-1}+\bar{c}_0}{\mathrm{DG}} + \abs{\Omega}\abs{\bar{c}_0}) \norm{w_h}{\mathrm{DG}}.
\end{equation}
Substituting \eqref{CHNS:eq:solvability_MB_boundedness2} and \eqref{CHNS:eq:solvability_MB_boundedness3} into \eqref{CHNS:eq:solvability_MB_boundedness1}, we have
\begin{multline*}
\norm{\mathcal{G}(w_h)}{M_h'}
= \sup_{\substack{\forall \mathring{\chi} \in M_h \\ \norm{\mathring{\chi}}{\mathrm{DG}}=1}} \abs{\langle\mathcal{G}(w_h),\mathring{\chi}\rangle}\\
\leq \Big(C_\alpha\tau + \frac{C_P^4}{K_\alpha\kappa} + \frac{C_\gamma^2\tau}{K_\strain \mu_\mathrm{s}} (\norm{y_h^{n-1}+\bar{c}_0}{\mathrm{DG}}+\abs{\Omega}\abs{\bar{c}_0})^2\Big) \norm{w_h}{\mathrm{DG}} \\ + \frac{C_P^3}{K_\alpha\kappa}\norm{\Phi_{-}\,\!'(y_h^{n-1}+\bar{c}_0)}{L^2(\Omega)} + C_P\norm{\hat{y}_h^{n-1}}{L^2(\Omega)}.
\end{multline*}
Due to the fact that $y_h^{n-1}, \hat{y}_h^{n-1} \in M_h$ and $\bar{c}_0 \in S_h$ are given quantities, the above inequality shows that the operator $\mathcal{G}$ maps bounded sets in $M_h$ to bounded sets in $M_h'$, i.\,e., we have proved boundedness of the operator.
Second, we show the coercivity of $\mathcal{G}$. By Cauchy--Schwarz's inequality and Poincar\'e's inequality, we have
\begin{align}\label{eq:CHNS:solvability:boundness1}
(\hat{y}_h^{n-1},w_h) 
\leq \norm{\hat{y}_h^{n-1}}{L^2(\Omega)}\norm{w_h}{L^2(\Omega)} 
\leq C_p\norm{\hat{y}_h^{n-1}}{L^2(\Omega)}\norm{w_h}{\mathrm{DG}}.
\end{align}
We again use \eqref{eq:CHNS:solvability:uniq_c_1} and \eqref{eq:CHNS:Phi_Taylor2}. By the coercivity of $a_{\mathcal{D}}$, Cauchy--Schwarz's inequality, Young's inequality, and Poincar\'e's inequality, we have
\begin{align}\label{eq:CHNS:solvability:boundness2}
\begin{split}
-(w_h,y_h) 
\leq& - \big(\Phi_{-}\,\!'(y_h^{n-1}+\bar{c}_0),y_h\big) - \kappa a_{\mathcal{D}}(y_h,y_h)\\
\leq& \norm{\Phi_{-}\,\!'(y_h^{n-1}+\bar{c}_0)}{L^2(\Omega)}\norm{y_h}{L^2(\Omega)} - K_\alpha\kappa \norm{y_h}{\mathrm{DG}}^2\\
\leq& \frac{C_P^2}{4K_\alpha \kappa}\norm{\Phi_{-}\,\!'(y_h^{n-1}+\bar{c}_0)}{L^2(\Omega)}^2 + \frac{K_\alpha \kappa}{C_P^2}\norm{y_h}{L^2(\Omega)}^2 - K_\alpha\kappa \norm{y_h}{\mathrm{DG}}^2\\
\leq& \frac{C_P^2}{4K_\alpha \kappa}\norm{\Phi_{-}\,\!'(y_h^{n-1}+\bar{c}_0)}{L^2(\Omega)}^2.
\end{split}
\end{align}
Using the definition of $\mathcal{G}$, the coercivity of $a_{\mathcal{D}}$, the bounds \eqref{eq:CHNS:solvability:a_A}, \eqref{eq:CHNS:solvability:boundness1}, and \eqref{eq:CHNS:solvability:boundness2}, we obtain
\begin{align*}
\langle\mathcal{G}(w_h),w_h\rangle =& (y_h-\hat{y}_h^{n-1},w_h) + \tau a_{\mathcal{D}}(w_h,w_h) + \tau a_{\mathcal{A}}(y_h^{n-1}+\bar{c}_0,\vec{v}_h,w_h)\\
\geq& K_\alpha\tau \norm{w_h}{\mathrm{DG}}^2 -C_p\norm{\hat{y}_h^{n-1}}{L^2(\Omega)}\norm{w_h}{\mathrm{DG}} -\frac{C_P^2}{4K_\alpha\kappa}\norm{\Phi_{-}\,\!'(y_h^{n-1}+\bar{c}_0)}{L^2(\Omega)}^2. 
\end{align*}
Since $y_h^{n-1}, \hat{y}_h^{n-1} \in M_h$ and $\bar{c}_0 \in S_h$ are given quantities, it is obvious that
\begin{equation*}
\lim_{\norm{w_h}{\mathrm{DG}}\rightarrow +\infty} \frac{\langle\mathcal{G}(w_h),w_h\rangle}{\norm{w_h}{\mathrm{DG}}}= +\infty.
\end{equation*}
Therefore we proved the coercivity of $\mathcal{G}$.
Third, let us check the monotonicity of $\mathcal{G}$. For any $w_h$ and $s_h$ in $M_h$, we have
\begin{align}\label{eq:CHNS:solvability:monotonicity1}
\begin{split}
&\langle\mathcal{G}(w_h)-\mathcal{G}(s_h),w_h-s_h\rangle\\
=& \langle\mathcal{G}(w_h),w_h\rangle - \langle\mathcal{G}(w_h),s_h\rangle - \langle\mathcal{G}(s_h),w_h\rangle + \langle\mathcal{G}(s_h),s_h\rangle \\
=& (y_h(w_h)-y_h(s_h),w_h-s_h) + \tau a_{\mathcal{D}}(w_h-s_h,w_h-s_h)\\ &+ \tau a_{\mathcal{A}}(y_h^{n-1}+\bar{c}_0,\vec{v}_h(w_h)-\vec{v}_h(s_h),w_h-s_h).
\end{split}
\end{align}
Due to the coercivity of $a_{\mathcal{D}}$, the second term above is always nonnegative, which means we only need to check the sign of the first and the third terms. From \cref{CHNS:solvability:uniq_c}, for any $\mathring{\varphi} \in M_h$, we obtain
\begin{align*}
(w_h,\mathring{\varphi}) &= \big(\Phi_{+}\,\!'(y_h(w_h)+\bar{c}_0)+\Phi_{-}\,\!'(y_h^{n-1}+\bar{c}_0),\mathring{\varphi}\big) + \kappa a_{\mathcal{D}}(y_h(w_h),\mathring{\varphi}),\\
\,\,(s_h,\mathring{\varphi}) &= \big(\Phi_{+}\,\!'(y_h(s_h)+\bar{c}_0)+\Phi_{-}\,\!'(y_h^{n-1}+\bar{c}_0),\mathring{\varphi}\big) + \kappa a_{\mathcal{D}}(y_h(s_h),\mathring{\varphi}).
\end{align*}
Subtracting the two equations above, for any $\mathring{\varphi} \in M_h$, we have
\begin{multline*}
(w_h-s_h,\mathring{\varphi}) = \big(\Phi_{+}\,\!'(y_h(w_h)+\bar{c}_0)-\Phi_{+}\,\!'(y_h(s_h)+\bar{c}_0),\mathring{\varphi}\big)\\ + \kappa a_{\mathcal{D}}(y_h(w_h)-y_h(s_h),\mathring{\varphi}).
\end{multline*}
By \cref{CHNS:solvability:uniq_c}, we know that $y_h(w_h)$ and $y_h(s_h)$ belong to $M_h$. We may then choose $\mathring{\varphi} = y_h(w_h)-y_h(s_h) \in M_h$ in the equation above. Using the fact that $\Phi_{+}\,\!'$ is non-decreasing and the coercivity of $a_{\mathcal{D}}$, we obtain
\begin{align}\label{eq:CHNS:solvability:monotonicity2}
\begin{split}
&\big(y_h(w_h)-y_h(s_h),w_h-s_h\big)\\
=& \big(\Phi_{+}\,\!'(y_h(w_h)+\bar{c}_0)-\Phi_{+}\,\!'(y_h(s_h)+\bar{c}_0),y_h(w_h)-y_h(s_h)\big) \\ &+ \kappa a_{\mathcal{D}}(y_h(w_h)-y_h(s_h),y_h(w_h)-y_h(s_h))\\ 
\geq& K_\alpha\kappa \norm{y_h(w_h)-y_h(s_h)}{\mathrm{DG}}^2 \geq 0.
\end{split}
\end{align}
From \cref{CHNS:solvability:uniq_v_and_p}, for any $\vec{\theta} \in \mathbf{X}_h$, we obtain
\begin{align*}
b_\mathcal{I}(y_h^{n-1}+\bar{c}_0,w_h,\vec{\theta}) =& \frac{1}{\tau}\big(\vec{v}_h(w_h),\vec{\theta}\big) + a_{\mathcal{C}}\big(\vec{v}_h^{n-1},\vec{v}_h^{n-1},\vec{v}_h(w_h),\vec{\theta}\big) \\ &+ \mu_\mathrm{s} a_\strain\big(\vec{v}_h(w_h),\vec{\theta}\big) + b_{\mathcal{P}}\big(p_h(w_h),\vec{\theta}\big),\\
b_\mathcal{I}(y_h^{n-1}+\bar{c}_0,s_h,\vec{\theta}) =& \frac{1}{\tau}\big(\vec{v}_h(s_h),\vec{\theta}\big) + a_{\mathcal{C}}\big(\vec{v}_h^{n-1},\vec{v}_h^{n-1},\vec{v}_h(s_h),\vec{\theta}\big) \\ &+ \mu_\mathrm{s} a_\strain\big(\vec{v}_h(s_h),\vec{\theta}\big) + b_{\mathcal{P}}\big(p_h(s_h),\vec{\theta}\big).
\end{align*}
Subtracting the two equations above, for any $\vec{\theta} \in \mathbf{X}_h$, we have
\begin{multline*}
b_\mathcal{I}(y_h^{n-1}+\bar{c}_0,w_h-s_h,\vec{\theta}) 
= \frac{1}{\tau}\big(\vec{v}_h(w_h)-\vec{v}_h(s_h),\vec{\theta}\big) \\
+ a_{\mathcal{C}}\big(\vec{v}_h^{n-1},\vec{v}_h^{n-1},\vec{v}_h(w_h)-\vec{v}_h(s_h),\vec{\theta}\big)\\
+\mu_\mathrm{s} a_\strain\big(\vec{v}_h(w_h)-\vec{v}_h(s_h),\vec{\theta}\big) + b_{\mathcal{P}}\big(p_h(w_h)-p_h(s_h),\vec{\theta}\big).
\end{multline*}
We may then choose $\vec{\theta}=\vec{v}_h(w_h)-\vec{v}_h(s_h) \in \mathbf{X}_h$ in the equation above. Using the positivity of $a_{\mathcal{C}}$, the coercivity of $a_\strain$, considering $\vec{v}_h(w_h)$ and $\vec{v}_h(s_h)$ are discretely divergence-free, and by \cref{rem:CHNS:relation_aA_bI}, we obtain 
\begin{align}\label{eq:CHNS:solvability:monotonicity3}
\begin{split}
& a_{\mathcal{A}}\big(y_h^{n-1}+\bar{c}_0,\vec{v}_h(w_h)-\vec{v}_h(s_h),w_h-s_h\big) \\
=\,& b_\mathcal{I}\big(y_h^{n-1}+\bar{c}_0,w_h-s_h,\vec{v}_h(w_h)-\vec{v}_h(s_h)\big) \\
=\,& \frac{1}{\tau}\big(\vec{v}_h(w_h)-\vec{v}_h(s_h),\vec{v}_h(w_h)-\vec{v}_h(s_h)\big)\\ 
& + a_{\mathcal{C}}\big(\vec{v}_h^{n-1},\vec{v}_h^{n-1},\vec{v}_h(w_h)-\vec{v}_h(s_h),\vec{v}_h(w_h)-\vec{v}_h(s_h)\big) \\
& + \mu_\mathrm{s} a_\strain\big(\vec{v}_h(w_h)-\vec{v}_h(s_h),\vec{v}_h(w_h)-\vec{v}_h(s_h)\big)\\
& + b_{\mathcal{P}}\big(p_h(w_h)-p_h(s_h),\vec{v}_h(w_h)-\vec{v}_h(s_h)\big) \\
\geq\,& K_\strain\mu_\mathrm{s} \norm{\vec{v}_h(w_h)-\vec{v}_h(s_h)}{\mathrm{DG}}^2 \geq 0.
\end{split}
\end{align}
Substituting \eqref{eq:CHNS:solvability:monotonicity2} and \eqref{eq:CHNS:solvability:monotonicity3} into \eqref{eq:CHNS:solvability:monotonicity1}, considering $\norm{\cdot}{\mathrm{DG}}$ is a norm in $M_h$, the following inequality is strict whenever $w_h \neq s_h$, i.\,e.,
\begin{equation*}
\langle\mathcal{G}(w_h)-\mathcal{G}(s_h),w_h-s_h\rangle 
\geq K_\alpha\tau\norm{w_h-s_h}{\mathrm{DG}}^2
\geq 0.
\end{equation*}
Thus we establish the strict monotonicity of $\mathcal{G}$.
Finally, let us show the continuity of $\mathcal{G}$. For any $\mathring{\chi} \in M_h$ with $\norm{\mathring{\chi}}{\mathrm{DG}}=1$, by triangle inequality, Cauchy--Schwarz's inequality, the continuity of $a_{\mathcal{D}}$, the boundedness of $a_{\mathcal{A}}$, and Poincar\'e's inequality, we have
\begin{align}\label{eq:CHNS:solvability:continuity1}
\begin{split}
&\abs{\langle\mathcal{G}(w_h)-\mathcal{G}(s_h),\mathring{\chi}\rangle}\\
\leq& \abs{(y_h(w_h)-y_h(s_h),\mathring{\chi})} + \tau \abs{a_{\mathcal{D}}(w_h-s_h,\mathring{\chi})} \\ &+ \tau \abs{a_{\mathcal{A}}(y_h^{n-1}+\bar{c}_0,\vec{v}_h(w_h)-\vec{v}_h(s_h),\mathring{\chi})}\\
\leq& \norm{y_h(w_h)-y_h(s_h)}{L^2(\Omega)}\norm{\mathring{\chi}}{L^2(\Omega)} + C_\alpha\tau \norm{w_h-s_h}{\mathrm{DG}}\norm{\mathring{\chi}}{\mathrm{DG}} \\ &+ C_\gamma\tau(\norm{y_h^{n-1}+\bar{c}_0}{\mathrm{DG}} + \abs{\Omega}\abs{\bar{c}_0}) \norm{\vec{v}_h(w_h)-\vec{v}_h(s_h)}{\mathrm{DG}}\norm{\mathring{\chi}}{\mathrm{DG}}\\
\leq& C_\alpha\tau \norm{w_h-s_h}{\mathrm{DG}} + C_P^2\norm{y_h(w_h)-y_h(s_h)}{\mathrm{DG}} \\ &+ C_\gamma\tau (\norm{y_h^{n-1}+\bar{c}_0}{\mathrm{DG}} + \abs{\Omega}\abs{\bar{c}_0})\norm{\vec{v}_h(w_h)-\vec{v}_h(s_h)}{\mathrm{DG}}.
\end{split}
\end{align}
We now estimate the second term above. By \eqref{eq:CHNS:solvability:monotonicity2}, Cauchy--Schwarz's inequality, and Poincar\'e's inequality, we obtain
\begin{align*}
K_\alpha\kappa\norm{y_h(w_h)-y_h(s_h)}{\mathrm{DG}}^2 \leq& \big(y_h(w_h)-y_h(s_h),w_h-s_h\big)\\
\leq& \norm{y_h(w_h)-y_h(s_h)}{L^2(\Omega)}\norm{w_h-s_h}{L^2(\Omega)}\\
\leq& C_P^2\norm{y_h(w_h)-y_h(s_h)}{\mathrm{DG}}\norm{w_h-s_h}{\mathrm{DG}},
\end{align*}
which implies the following bound
\begin{equation}\label{eq:CHNS:solvability:continuity2}
\norm{y_h(w_h)-y_h(s_h)}{\mathrm{DG}} \leq \frac{C_P^2}{K_\alpha\kappa}\norm{w_h-s_h}{\mathrm{DG}}.
\end{equation}
Similarly, by \eqref{eq:CHNS:solvability:monotonicity3} and the boundedness of $a_{\mathcal{A}}$, we have
\begin{multline*}
K_\strain\mu_\mathrm{s}\norm{\vec{v}_h(w_h)-\vec{v}_h(s_h)}{\mathrm{DG}}^2 
\leq a_{\mathcal{A}}\big(y_h^{n-1}+\bar{c}_0,\vec{v}_h(w_h)-\vec{v}_h(s_h),w_h-s_h\big)\\
\leq C_\gamma \big(\norm{y_h^{n-1}+\bar{c}_0}{\mathrm{DG}}+\abs{\Omega}\abs{\bar{c}_0}\big) \norm{\vec{v}_h(w_h)-\vec{v}_h(s_h)}{\mathrm{DG}}\norm{w_h-s_h}{\mathrm{DG}},
\end{multline*}
which implies the following bound
\begin{equation}\label{eq:CHNS:solvability:continuity3}
\norm{\vec{v}_h(w_h)-\vec{v}_h(s_h)}{\mathrm{DG}} \leq \frac{C_\gamma}{K_\strain\mu_\mathrm{s}} (\norm{y_h^{n-1}+\bar{c}_0}{\mathrm{DG}}+\abs{\Omega}\abs{\bar{c}_0}) \norm{w_h-s_h}{\mathrm{DG}}.
\end{equation}
Combining \eqref{eq:CHNS:solvability:continuity2}, \eqref{eq:CHNS:solvability:continuity3}, and \eqref{eq:CHNS:solvability:continuity1}, we obtain 
\begin{multline*}
\norm{\mathcal{G}(w_h)-\mathcal{G}(s_h)}{M_h'} 
= \sup_{\substack{\forall \mathring{\chi} \in M_h \\ \norm{\mathring{\chi}}{\mathrm{DG}}=1}} \abs{\langle\mathcal{G}(w_h)-\mathcal{G}(s_h),\mathring{\chi}\rangle}\\
\leq \Big(C_\alpha\tau + \frac{C_P^4}{K_\alpha\kappa} + \frac{C_\gamma^2 \tau}{K_\strain \mu_\mathrm{s}}(\norm{y_h^{n-1}+\bar{c}_0}{\mathrm{DG}}+\abs{\Omega}\abs{\bar{c}_0})^2\Big)\norm{w_h-s_h}{\mathrm{DG}},
\end{multline*}
which means $\norm{\mathcal{G}(w_h)-\mathcal{G}(s_h)}{M_h'}$ tends to zero whenever $\norm{w_h-s_h}{\mathrm{DG}}$ tends to zero, i.\,e., we proved the continuity of the operator $\mathcal{G}$.
All conditions of the Minty--Browder theorem are satisfied. We conclude that there exists a unique solution $w_h^n$ such that $\langle\mathcal{G}(w_h^n),\mathring{\chi}\rangle = 0$ for all $\mathring{\chi} \in M_h$. Recall \cref{CHNS:solvability:uniq_c} and \cref{CHNS:solvability:uniq_v_and_p}, this implies that $\big(y_h(w_h^n),w_h^n,\vec{v}_h(w_h^n),p_h(w_h^n)\big)$ is the unique solution of scheme \eqref{CHNS:solvability:schemeB}.
\end{proof}
Therefore we have proved the following result.
\begin{theorem}\label{thm:CHNS:unique_solvability}
The DG scheme \eqref{eq:CHNS:DGscheme} is uniquely solvable for any mesh size $h$, time step size $\tau$, parameter $\kappa$, and parameter $\mu_\mathrm{s}$\,.
\end{theorem}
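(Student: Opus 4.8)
The plan is to obtain the conclusion purely by transitivity from the chain of equivalences and the solvability result assembled above, so no fresh estimate is required. First I would invoke \cref{lem:CHNS:solvability:P}, which reduces the unique solvability of the DG scheme \eqref{eq:CHNS:DGscheme} to that of the mean-zero reformulation \eqref{eq:CHNS:solvability:P}; here the chemical potential is replaced by its zero-average representative $w_h^n \in M_h$ and the order parameter is shifted by the conserved mass $\bar{c}_0$, exploiting the translational invariance of $a_{\mathcal{A}}$ in its third argument. Next I would apply the preceding theorem, which --- by splitting the velocity--pressure pair as $(\hat{\vec{v}}_h^n+\tilde{\vec{v}}_h^n,\,\hat{p}_h^n+\tilde{p}_h^n)$ around the unconditionally well-posed auxiliary flow $(\tilde{\vec{v}}_h^n,\tilde{p}_h^n)$ of \cref{lem:CHNS:unique_sol_aux_flow} --- identifies the unique solvability of \eqref{eq:CHNS:solvability:P} with that of the decoupled scheme \eqref{CHNS:solvability:schemeB}. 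It then remains only to cite the final lemma, which proves \eqref{CHNS:solvability:schemeB} uniquely solvable for every $h$, $\tau$, $\kappa$, and $\mu_\mathrm{s}$, whence the theorem follows immediately.

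The substance behind that final lemma --- and hence the real content of the present theorem --- is the collapse of the coupled nonlinear system to a single operator equation in the unknown $w_h^n$. The strategy there is to express $y_h = y_h(w_h)$ via \cref{CHNS:solvability:uniq_c} (solvability through Brouwer's fixed-point theorem, using $\Phi_+'' \geq 0$ and the coercivity of $a_{\mathcal{D}}$) and $(\vec{v}_h,p_h) = (\vec{v}_h(w_h),p_h(w_h))$ via \cref{CHNS:solvability:uniq_v_and_p} (coercivity of $a_\strain$ and the inf--sup condition), then define $\mathcal{G}:M_h \to M_h'$ from the remaining discrete mass balance and appeal to the Minty--Browder theorem once $\mathcal{G}$ is shown to be bounded, coercive, strictly monotone, and continuous.

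The step I expect to be the main obstacle --- already carried out above for \eqref{CHNS:solvability:schemeB} --- is the strict monotonicity of $\mathcal{G}$, where one must control the cross term $a_{\mathcal{A}}(y_h^{n-1}+\bar{c}_0,\,\vec{v}_h(w_h)-\vec{v}_h(s_h),\,w_h-s_h)$ coupling the order-parameter and velocity solves. The decisive structural device is the design identity $a_{\mathcal{A}}(c,\vec{v},\mu)=b_\mathcal{I}(c,\mu,\vec{v})$ of \cref{rem:CHNS:relation_aA_bI}: it recasts this term as $b_\mathcal{I}(y_h^{n-1}+\bar{c}_0,\,w_h-s_h,\,\vec{v}_h(w_h)-\vec{v}_h(s_h))$, and testing the velocity equation with the velocity difference then identifies it, via the positivity of $a_{\mathcal{C}}$ and the discrete divergence-free constraint, with a quadratic form bounded below by $K_\strain\mu_\mathrm{s}\norm{\vec{v}_h(w_h)-\vec{v}_h(s_h)}{\mathrm{DG}}^2 \geq 0$. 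Together with the monotonicity of $\Phi_+'$ and the coercivity of $a_{\mathcal{D}}$ this yields $\langle\mathcal{G}(w_h)-\mathcal{G}(s_h),w_h-s_h\rangle \geq K_\alpha\tau\norm{w_h-s_h}{\mathrm{DG}}^2$, strict for $w_h \neq s_h$, while boundedness, coercivity, and continuity follow from the a priori bounds \eqref{CHNS:eq:solvability_MB_boundedness2}--\eqref{CHNS:eq:solvability_MB_boundedness3} and the Lipschitz estimates \eqref{eq:CHNS:solvability:continuity2}--\eqref{eq:CHNS:solvability:continuity3}. Crucially, each of these bounds is valid for arbitrary $h$, $\tau$, $\kappa$, $\mu_\mathrm{s}$, so the solvability transferred back through the equivalence chain is unconditional, which is precisely the claim.
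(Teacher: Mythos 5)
Your proposal is correct and follows essentially the same route as the paper: the chain of equivalences (\cref{lem:CHNS:solvability:P}, then the equivalence with the decoupled scheme \eqref{CHNS:solvability:schemeB} built on the auxiliary flow problem of \cref{lem:CHNS:unique_sol_aux_flow}), followed by the Minty--Browder argument for the operator $\mathcal{G}$ on $M_h$, with the design identity of \cref{rem:CHNS:relation_aA_bI} doing exactly the work you identify in the strict monotonicity step. Your account of the supporting lemmas (Brouwer for $y_h(w_h)$, coercivity and inf-sup for $(\vec{v}_h(w_h),p_h(w_h))$) and of why the result is unconditional also matches the paper's proof.
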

\begin{remark}
It is easy to check that \cref{thm:CHNS:unique_solvability} is valid for non-symmetric version of the discontinuous Galerkin formulation as well.
\end{remark}

\subsection{Stability analysis}\label{sec:CHNS:DG_stability}
In this section, we show the discrete solution of \eqref{eq:CHNS:DGscheme} satisfies the energy dissipation property and we derive stability bounds valid for any chemical energy density $\Phi$. Analoguously to  the energy \eqref{eq:CHNS:energy} at the  continuous level, we define the discrete energy:
\begin{equation}\label{eq:CHNS:discrete_energy}
F_h(c_h,\vec{v}_h) = \frac{1}{2}(\vec{v}_h,\vec{v}_h) + \big(\Phi(c_h),1\big) + \frac{\kappa}{2}a_{\mathcal{D}}(c_h,c_h).
\end{equation}
The next statement, the discrete energy dissipation law, stems directly from the positivity in \cref{lem:CHNS:positivity_convection} and the convex-concave splitting.
\begin{theorem}\label{thm:CHNS:dis_energy_dissipation}
Let $(c_h^n, \mu_h^n, \vec{v}_h^{n}, p_h^n) \in S_h \times S_h \times \mathbf{X}_h \times Q_h$ be the unique solution of the DG scheme \eqref{eq:CHNS:DGscheme}. Then for any mesh size $h$, time step size $\tau$, parameter $\kappa$\,, and parameter $\mu_\mathrm{s}$\,, the discrete energy \eqref{eq:CHNS:discrete_energy} is non-increasing in time.
\[
\forall 1\leq n \leq N,\quad  F_h(c_h^n, \vec{v}_h^n) \leq F_h(c_h^{n-1}, \vec{v}_h^{n-1}).
\]
\end{theorem}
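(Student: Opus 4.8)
The plan is to test each of the three evolution equations in \eqref{eq:CHNS:DGscheme} with the discrete quantity conjugate to its own unknown, add the resulting identities, and watch the coupling terms annihilate one another. Concretely, I would set $\chi = \mu_h^n$ in \eqref{eq:CHNS:DGscheme:a}, $\varphi = \delta_\tau c_h^n$ in \eqref{eq:CHNS:DGscheme:b}, and $\vec{\theta} = \vec{v}_h^n$ in \eqref{eq:CHNS:DGscheme:c}. The term $(\delta_\tau c_h^n, \mu_h^n)$ generated by the first choice cancels $-(\mu_h^n, \delta_\tau c_h^n)$ from the second. By the design property of \cref{rem:CHNS:relation_aA_bI}, $a_{\mathcal{A}}(c_h^{n-1}, \vec{v}_h^n, \mu_h^n) = b_\mathcal{I}(c_h^{n-1}, \mu_h^n, \vec{v}_h^n)$, so the advection term of the first equation cancels the interface term of the third. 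Finally, since $p_h^n \in Q_h$, the discrete incompressibility \eqref{eq:CHNS:DGscheme:d} gives $b_{\mathcal{P}}(p_h^n, \vec{v}_h^n) = 0$.

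After these cancellations the sum of the three tested identities collapses to
\begin{equation*}
a_{\mathcal{D}}(\mu_h^n, \mu_h^n) + \big(\Phi_{+}\,\!'(c_h^n)+\Phi_{-}\,\!'(c_h^{n-1}), \delta_\tau c_h^n\big) + \kappa a_{\mathcal{D}}(c_h^n, \delta_\tau c_h^n) + (\delta_\tau \vec{v}_h^n, \vec{v}_h^n) + a_{\mathcal{C}}(\vec{v}_h^{n-1}, \vec{v}_h^{n-1}, \vec{v}_h^n, \vec{v}_h^n) + \mu_\mathrm{s} a_\strain(\vec{v}_h^n, \vec{v}_h^n) = 0.
\end{equation*}
Three of these terms are manifestly nonnegative: $a_{\mathcal{D}}(\mu_h^n, \mu_h^n) \geq 0$ by coercivity, $a_{\mathcal{C}}(\vec{v}_h^{n-1}, \vec{v}_h^{n-1}, \vec{v}_h^n, \vec{v}_h^n) \geq 0$ by \cref{lem:CHNS:positivity_convection}, and $\mu_\mathrm{s} a_\strain(\vec{v}_h^n, \vec{v}_h^n) \geq 0$ by \cref{lem:astraincoer}; these form the discrete dissipation. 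For the two backward-difference terms I would apply the polarization identity $2(a-b, a) = (a,a) - (b,b) + (a-b, a-b)$, using the $L^2$ inner product for the kinetic term and the symmetric bilinear form $a_{\mathcal{D}}$ for the gradient term. This rewrites $(\delta_\tau \vec{v}_h^n, \vec{v}_h^n)$ and $\kappa a_{\mathcal{D}}(c_h^n, \delta_\tau c_h^n)$ as $\tau^{-1}$ times the increments of the kinetic and gradient energies of $F_h$, plus nonnegative square-of-increment remainders.

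The hard part, and the reason the scheme treats $\Phi'$ by the convex-concave split $\Phi_{+}\,\!'(c_h^n)+\Phi_{-}\,\!'(c_h^{n-1})$, is controlling the chemical energy term. I would establish the pointwise inequality
\begin{equation*}
\big(\Phi_{+}\,\!'(c_h^n)+\Phi_{-}\,\!'(c_h^{n-1})\big)(c_h^n - c_h^{n-1}) \geq \Phi(c_h^n) - \Phi(c_h^{n-1}),
\end{equation*}
obtained by adding the subgradient inequality $\Phi_{+}\,\!'(c_h^n)(c_h^n - c_h^{n-1}) \geq \Phi_{+}(c_h^n) - \Phi_{+}(c_h^{n-1})$ for the convex part to the supergradient inequality $\Phi_{-}\,\!'(c_h^{n-1})(c_h^n - c_h^{n-1}) \geq \Phi_{-}(c_h^n) - \Phi_{-}(c_h^{n-1})$ for the concave part and invoking $\Phi = \Phi_{+} + \Phi_{-}$. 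Integrating over $\Omega$ yields $\big(\Phi_{+}\,\!'(c_h^n)+\Phi_{-}\,\!'(c_h^{n-1}), \delta_\tau c_h^n\big) \geq \tau^{-1}\big((\Phi(c_h^n),1) - (\Phi(c_h^{n-1}),1)\big)$.

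Substituting the three lower bounds into the collapsed identity and discarding every nonnegative dissipation and remainder term, each surviving contribution is precisely $\tau^{-1}$ times the increment of one piece of $F_h$, whence $\tau^{-1}\big(F_h(c_h^n,\vec{v}_h^n) - F_h(c_h^{n-1},\vec{v}_h^{n-1})\big) \leq 0$. Multiplying by $\tau > 0$ gives the claimed monotonicity. I expect no genuine difficulty beyond the convex-concave inequality, provided the symmetry of $a_{\mathcal{D}}$ needed for the polarization step is respected.
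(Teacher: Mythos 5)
Your proposal is correct and follows essentially the same path as the paper's proof: identical test functions ($\chi=\mu_h^n$, $\varphi=\delta_\tau c_h^n$, $\vec{\theta}=\vec{v}_h^n$, plus the incompressibility equation to kill the pressure term), the same cancellations via \cref{rem:CHNS:relation_aA_bI}, the same use of positivity of $a_{\mathcal{C}}$ and coercivity of $a_{\mathcal{D}}$, $a_\strain$, and the same polarization treatment of the backward differences. The only cosmetic difference is the chemical-energy step, where you invoke the first-order convexity (gradient/supergradient) inequalities while the paper obtains the same bound by second-order Taylor expansion with $\Phi_{+}\,\!''\geq 0$ and $\Phi_{-}\,\!''\leq 0$; the two arguments are equivalent for $\Phi\in\mathcal{C}^2$.
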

\begin{proof}
Take $\chi=\mu_h^n$ in \eqref{eq:CHNS:DGscheme:a}, $\varphi=\delta_\tau c_h^n$ in \eqref{eq:CHNS:DGscheme:b}, $\vec{\theta}=\vec{v}_h^n$ in \eqref{eq:CHNS:DGscheme:c}, and $\phi=-p_h^n$ in \eqref{eq:CHNS:DGscheme:d}:
\begin{eqnarray*}
(\delta_\tau c_h^n,\mu_h^n) + a_{\mathcal{D}}(\mu_h^n,\mu_h^n) + a_{\mathcal{A}}(c_h^{n-1},\vec{v}_h^n,\mu_h^n) = 0, \\
(\Phi_{+}\,\!'(c_h^n)+\Phi_{-}\,\!'(c_h^{n-1}),\,\delta_\tau c_h^n) + \kappa a_{\mathcal{D}}(c_h^n,\delta_\tau c_h^n) - (\mu_h^n,\delta_\tau c_h^n) = 0, \\
(\delta_\tau \vec{v}_h^n ,\vec{v}_h^n) + a_{\mathcal{C}}(\vec{v}_h^{n-1},\vec{v}_h^{n-1},\vec{v}_h^n,\vec{v}_h^n) + \mu_\mathrm{s} a_\strain(\vec{v}_h^n, \vec{v}_h^n) \hspace*{5.15em}\\ + b_{\mathcal{P}}(p_h^n,\vec{v}_h^n) - b_\mathcal{I}(c_h^{n-1},\mu_h^n,\vec{v}_h^n) = 0, \\
-b_{\mathcal{P}}(p_h^n,\vec{v}_h^n) = 0.
\end{eqnarray*}
Adding the equations above, considering \cref{rem:CHNS:relation_aA_bI}, and benefitting from the positivity of $a_\mathcal{C}$, the coercivity of $a_\mathcal{D}$ and $a_\strain$, we have
\begin{align}\label{eq:CHNS:Dis_energy_1}
\begin{split}
&(\delta_\tau \vec{v}_h^n ,\vec{v}_h^n) + \big(\Phi_{+}\,\!'(c_h^n)+\Phi_{-}\,\!'(c_h^{n-1}),\,\delta_\tau c_h^n\big) + \kappa a_{\mathcal{D}}(c_h^n,\delta_\tau c_h^n) \\
=& -a_{\mathcal{D}}(\mu_h^n,\mu_h^n) - a_{\mathcal{C}}(\vec{v}_h^{n-1},\vec{v}_h^{n-1},\vec{v}_h^n,\vec{v}_h^n) - \mu_\mathrm{s} a_\strain(\vec{v}_h^n, \vec{v}_h^n)\\ &- a_{\mathcal{A}}(c_h^{n-1},\vec{v}_h^n,\mu_h^n) + b_\mathcal{I}(c_h^{n-1},\mu_h^n,\vec{v}_h^n) \\
\leq& -a_{\mathcal{D}}(\mu_h^n,\mu_h^n) - \mu_\mathrm{s} a_\strain(\vec{v}_h^n, \vec{v}_h^n)
\leq -K_\alpha\norm{\mu_h^n}{\mathrm{DG}}^2 - K_\strain\mu_\mathrm{s}\norm{\vec{v}_h^n}{\mathrm{DG}}^2 
\leq 0.
\end{split}
\end{align}
For the term $(\Phi_{+}\,\!'(c_h^n)+\Phi_{-}\,\!'(c_h^{n-1}),\,\delta_\tau c_h^n)$\,, we utilize Taylor expansions up to the second order. There exist $\xi_h$ and $\eta_h$ between $c_h^{n-1}$ and $c_h^n$ such that
\begin{align*}
\Phi_+\,\!'(c_h^n)(c_h^n-c_h^{n-1}) &= \Phi_+(c_h^n) - \Phi_+(c_h^{n-1}) + \frac{1}{2}\Phi_{+}\,\!''(\xi_h)(c_h^{n-1}-c_h^n)^2,\\
\Phi_{-}\,\!'(c_h^{n-1})(c_h^n-c_h^{n-1}) &= \Phi_-(c_h^n) - \Phi_-(c_h^{n-1}) - \frac{1}{2}\Phi_{-}\,\!''(\eta_h)(c_h^n-c_h^{n-1})^2.
\end{align*}
Adding above two equations and using the fact that $\Phi_+$ is convex and $\Phi_-$ is concave, we have
\begin{align}\label{eq:CHNS:Dis_energy_2}
\begin{split}
&\big(\Phi_{+}\,\!'(c_h^n)+\Phi_{-}\,\!'(c_h^{n-1}),\,\delta_\tau c_h^n\big) \\
=& \big(\delta_\tau\Phi(c_h^n),\,1\big) + \frac{1}{2\tau}\big(\Phi_{+}\,\!''(\xi_h), (c_h^{n-1}-c_h^n)^2\big) - \frac{1}{2\tau}\big(\Phi_{-}\,\!''(\eta_h), (c_h^n-c_h^{n-1})^2\big)\\
\geq& \big(\delta_\tau\Phi(c_h^n),\,1\big).
\end{split}
\end{align}
For the terms $(\delta_\tau \vec{v}_h^n ,\vec{v}_h^n)$ and $\kappa a_{\mathcal{D}}(c_h^n,\delta_\tau c_h^n)$, since the inner product and $a_{\mathcal{D}}$ are both symmetric bilinear forms, we immediately have
\begin{align}
(\delta_\tau \vec{v}_h^n ,\vec{v}_h^n) 
\geq& \frac{1}{2\tau}(\vec{v}_h^n,\vec{v}_h^n) - \frac{1}{2\tau}(\vec{v}_h^{n-1},\vec{v}_h^{n-1}), \label{eq:CHNS:Dis_energy_3}\\
a_{\mathcal{D}}(c_h^n,\delta_\tau c_h^n) 
\geq& \frac{1}{2\tau}a_{\mathcal{D}}(c_h^n,c_h^n) - \frac{1}{2\tau}a_{\mathcal{D}}(c_h^{n-1},c_h^{n-1}). \label{eq:CHNS:Dis_energy_4}
\end{align}
Combine \eqref{eq:CHNS:Dis_energy_1} -- \eqref{eq:CHNS:Dis_energy_4} together and recall the definition of the discrete energy \eqref{eq:CHNS:discrete_energy}, then
\begin{align*}
0 \geq& -K_\alpha\norm{\mu_h^n}{\mathrm{DG}}^2 - K_\strain\mu_\mathrm{s}\norm{\vec{v}_h^n}{\mathrm{DG}}^2\\
\geq& \frac{1}{2\tau}(\vec{v}_h^n,\vec{v}_h^n) - \frac{1}{2\tau}(\vec{v}_h^{n-1},\vec{v}_h^{n-1}) + \frac{1}{\tau}\big(\Phi(c_h^n),\,1\big) \\ &- \frac{1}{\tau}\big(\Phi(c_h^{n-1}),\,1\big) + \frac{\kappa}{2\tau}a_{\mathcal{D}}(c_h^n,c_h^n) - \frac{\kappa}{2\tau}a_{\mathcal{D}}(c_h^{n-1},c_h^{n-1})\\
=& \frac{1}{\tau}F_h(c_h^n,\vec{v}_h^n) - \frac{1}{\tau}F_h(c_h^{n-1},\vec{v}_h^{n-1}),
\end{align*}
which means the discrete energy $F_h(c_h,\vec{v}_h)$ is non-increasing in time.
\end{proof}
Throughout the paper, the constant $C$ denotes a generic constant that takes different values at different places and that is independent of
$h$ and $\tau$. 
It is reasonable to assume the initial energy $F_h(c_h^0, \vec{v}_h^0)$ is finite. The following a priori bounds for the order parameter, chemical potential and velocity are a direct result of the discrete energy dissipation law (\cref{thm:CHNS:dis_energy_dissipation}). 
\begin{theorem}\label{thm:CHNS:stability_bound1}
Let $(c_h^n, \mu_h^n, \vec{v}_h^{n}, p_h^n) \in S_h \times S_h \times \mathbf{X}_h \times Q_h$ be the unique solution of the DG scheme \eqref{eq:CHNS:DGscheme}. Then for any mesh size $h$, time step size $\tau$, parameter $\kappa$, and parameter $\mu_\mathrm{s}$, and for any $1 \leq \ell \leq N$ we have
\begin{multline}\label{eq:CHNS:Stability}
\frac{1}{2}\norm{\vec{v}_h^\ell}{L^2(\Omega)}^2 + \big(\Phi(c_h^\ell),1\big) + \frac{K_\alpha\kappa}{2}\norm{c_h^\ell}{\mathrm{DG}}^2 \\ + \tau K_\alpha\sum_{n=1}^\ell \norm{\mu_h^n}{\mathrm{DG}}^2 + \tau K_\strain\mu_\mathrm{s}\sum_{n=1}^\ell \norm{\vec{v}_h^n}{\mathrm{DG}}^2 \leq F_h(c_h^0, \vec{v}_h^0).
\end{multline}
In addition, if the chemical energy density $\Phi$ is bounded from below by a constant (not necessarily positive), as it is the case for
the Ginzburg--Landau double well potential or the logarithmic potential in \eqref{eq:CHNS:chemical_energy_density}, then there is a positive constant $C$
independent of $h$ and $\tau$ such that
\begin{subequations}\label{eq:CHNS:stability_result}
\begin{align}
\max_{1\leq n\leq \ell}\norm{c_h^n}{\mathrm{DG}}^2 + \max_{1\leq n\leq \ell}\norm{\vec{v}_h^n}{L^2(\Omega)}^2 &\leq C, \label{eq:CHNS:stability_result1}\\
\tau\sum_{n=1}^\ell \norm{\mu_h^n}{\mathrm{DG}}^2 + \tau\sum_{n=1}^\ell \norm{\vec{v}_h^n}{\mathrm{DG}}^2 &\leq C. \label{eq:CHNS:stability_result2}
\end{align}
\end{subequations}
\end{theorem}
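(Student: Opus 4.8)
The plan is to convert the discrete energy dissipation law into the quantitative bound \eqref{eq:CHNS:Stability} by telescoping, and then to use the lower bound on $\Phi$ to extract the $h$- and $\tau$-uniform estimates \eqref{eq:CHNS:stability_result}. The first step is to sharpen what we borrow from \cref{thm:CHNS:dis_energy_dissipation}: rather than its qualitative conclusion $F_h(c_h^n,\vec{v}_h^n)\le F_h(c_h^{n-1},\vec{v}_h^{n-1})$, I would keep the dissipation terms that its proof produces. Multiplying the final chain of that proof by $\tau$ gives, for each $1\le n\le \ell$,
\[
F_h(c_h^n,\vec{v}_h^n) - F_h(c_h^{n-1},\vec{v}_h^{n-1}) \le -\tau K_\alpha\norm{\mu_h^n}{\mathrm{DG}}^2 - \tau K_\strain\mu_\mathrm{s}\norm{\vec{v}_h^n}{\mathrm{DG}}^2.
\]

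Summing this inequality over $n=1,\dots,\ell$ telescopes the left-hand side to $F_h(c_h^\ell,\vec{v}_h^\ell)-F_h(c_h^0,\vec{v}_h^0)$, so that
\[
F_h(c_h^\ell,\vec{v}_h^\ell) + \tau K_\alpha\sum_{n=1}^\ell\norm{\mu_h^n}{\mathrm{DG}}^2 + \tau K_\strain\mu_\mathrm{s}\sum_{n=1}^\ell\norm{\vec{v}_h^n}{\mathrm{DG}}^2 \le F_h(c_h^0,\vec{v}_h^0).
\]
Expanding $F_h(c_h^\ell,\vec{v}_h^\ell)$ according to its definition \eqref{eq:CHNS:discrete_energy} and bounding $\tfrac{\kappa}{2}a_{\mathcal{D}}(c_h^\ell,c_h^\ell)\ge \tfrac{K_\alpha\kappa}{2}\norm{c_h^\ell}{\mathrm{DG}}^2$ by the coercivity of $a_{\mathcal{D}}$ then yields \eqref{eq:CHNS:Stability} directly.

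For the second part, I would assume $\Phi\ge -M$ for some constant $M\ge 0$, as holds for both potentials in \eqref{eq:CHNS:chemical_energy_density}. Then $(\Phi(c_h^\ell),1)=\int_\Omega\Phi(c_h^\ell)\ge -M\abs{\Omega}$, and moving this single term to the right of \eqref{eq:CHNS:Stability} leaves on the left a sum of nonnegative quantities bounded by the constant $C:=F_h(c_h^0,\vec{v}_h^0)+M\abs{\Omega}$, which is independent of $h$ and $\tau$ because the initial energy is finite. Since this holds for every $\ell$, discarding all but one nonnegative term and taking the maximum over $\ell$ produces \eqref{eq:CHNS:stability_result1}, while retaining only the two dissipation sums gives \eqref{eq:CHNS:stability_result2} after absorbing the factors $K_\alpha^{-1}$ and $(K_\strain\mu_\mathrm{s})^{-1}$ into the generic constant.

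There is no serious analytical obstacle here; the argument is a standard discrete energy estimate with no Gronwall inequality required, since the dissipation is unconditional. The two points deserving care are, first, to carry the quantitative dissipation --- not merely the monotonicity of $F_h$ --- out of \cref{thm:CHNS:dis_energy_dissipation}, and second, to recognize that it is precisely the lower-boundedness of $\Phi$ that upgrades the single telescoped inequality into bounds uniform in $h$ and $\tau$. One should also confirm that $F_h(c_h^0,\vec{v}_h^0)$ is itself bounded independently of the discretization, which follows from the approximation properties of the initial data $c_h^0=\mathcal{P}_h c^0$ and the $L^2$ projection $\vec{v}_h^0$.
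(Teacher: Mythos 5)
Your proposal is correct and follows essentially the same route as the paper: extract the quantitative dissipation inequality $\tau K_\alpha\norm{\mu_h^n}{\mathrm{DG}}^2 + \tau K_\strain\mu_\mathrm{s}\norm{\vec{v}_h^n}{\mathrm{DG}}^2 \leq F_h(c_h^{n-1},\vec{v}_h^{n-1}) - F_h(c_h^n,\vec{v}_h^n)$ from the proof of \cref{thm:CHNS:dis_energy_dissipation}, telescope over $n$, apply the coercivity of $a_{\mathcal{D}}$ to the term $\tfrac{\kappa}{2}a_{\mathcal{D}}(c_h^\ell,c_h^\ell)$ in $F_h(c_h^\ell,\vec{v}_h^\ell)$, and then use the lower bound $\Phi\geq -M$ together with the finiteness of the initial energy for \eqref{eq:CHNS:stability_result}. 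Your write-up is in fact slightly more explicit than the paper's (which leaves the second part as "straightforward"), but there is no substantive difference.
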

\begin{proof}
From the proof of \cref{thm:CHNS:dis_energy_dissipation}, we know that
\begin{equation*}
\tau K_\alpha\norm{\mu_h^n}{\mathrm{DG}}^2 + \tau K_\strain\mu_\mathrm{s}\norm{\vec{v}_h^n}{\mathrm{DG}}^2 \leq F_h(c_h^{n-1},\vec{v}_h^{n-1}) - F_h(c_h^n,\vec{v}_h^n).
\end{equation*}
For any $1 \leq \ell \leq N$\,, take the summation of $n$ from $1$ to $\ell$, then
\begin{equation*}
\tau K_\alpha\sum_{n=1}^\ell \norm{\mu_h^n}{\mathrm{DG}}^2 + \tau K_\strain\mu_\mathrm{s} \sum_{n=1}^\ell \norm{\vec{v}_h^n}{\mathrm{DG}}^2 \leq F_h(c_h^0, \vec{v}_h^0) - F_h(c_h^\ell, \vec{v}_h^\ell).
\end{equation*}
Finally \eqref{eq:CHNS:Stability} is obtained by moving $F_h(c_h^\ell, \vec{v}_h^\ell)$ to the left-hand side and using the coercivity of $a_{\mathcal{D}}$. In case $\Phi$ is bounded from below by a constant, since the parameters $\kappa$, $\mu_\mathrm{s}$ and constants $K_\alpha$, $K_\strain$ are all positive, it is straightforward to show \eqref{eq:CHNS:stability_result} holds.
\end{proof}

\subsection{Error analysis}\label{sec:CHNS:error_analysis}
In this section, we derive an optimal error estimate for the fully discrete scheme \eqref{eq:CHNS:DGscheme} in terms of time and space discretization parameters. 
We show that the method \eqref{eq:CHNS:DGscheme} converges for any general chemical energy density that satisfies Lipschitz continuity constraints on the first order derivative of the convex and concave decomposition. More precisely, the Lipschitz condition is as follows:
\begin{assumption}\label{as:CHNS:assumptionA}
There is a constant $C_\mathrm{lip}>0$ independent of mesh size $h$ and time step size $\tau$ such that for all $n\geq 0$
\begin{align}\label{eq:CHNS:error_assumption_on_Phi}
\begin{split}
\norm{\Phi_+\,\!'(c_h^n)-\Phi_+\,\!'(c^n)}{\mathrm{DG}} &\leq C_\mathrm{lip}\norm{c_h^n-c^n}{\mathrm{DG}},\\
\norm{\Phi_-\,\!'(c_h^n)-\Phi_-\,\!'(c^n)}{\mathrm{DG}} &\leq C_\mathrm{lip}\norm{c_h^n-c^n}{\mathrm{DG}}.
\end{split}
\end{align}
In addition, we also assume that both $\Phi_-\,\!''$ and $\Phi_-\,\!'''$ are bounded.
\end{assumption} 
\begin{remark}
\cref{as:CHNS:assumptionA} with respect to $\Phi_-$ is automatically satisfied if $\Phi$ is the  Ginzburg--Landau potential \eqref{eq:CHNS:chemical_energy_density:GL} or the logarithmic potential \eqref{eq:CHNS:chemical_energy_density:Log}. Checking that the assumption \cref{eq:CHNS:error_assumption_on_Phi} holds for $\Phi_+$ is a complicated task in general. However, it is   common practice to truncate the potential and to define an extension such that \cref{as:CHNS:assumptionA} is  satisfied for any general potential function. 
\end{remark}
We also assume that the weak solutions are regular enough. More precisely, we have 
\begin{subequations}\label{eq:CHNS:error_regularities}
\begin{align}
c, \partial_t c &\in L^\infty(0,\, T;\, H^{q+1}(\Omega)), \hspace*{0.45cm}
\partial_{tt} c \in L^2(0,\, T;\, L^2(\Omega)),\\
\mu &\in L^{\infty}\big(0,\, T;\, H^{q+1}(\Omega)\big)\cap L^{\infty}(0,T;W^{1,4}(\Omega)),\\
\vec{v} &\in L^{\infty}(0,\, T;\, H^{q+1}(\Omega)^d),\quad
\partial_t \vec{v} \in L^2(0,T;\, H^q(\Omega)^d),\\
\partial_{tt} \vec{v} &\in L^2(0,\, T;\, L^2(\Omega)^d), \hspace*{1.27cm}
p \in  L^\infty(0,\, T;\, H^{q}(\Omega)).
\end{align}
\end{subequations}
For simplicity, we denote by $c^n,\mu^n,\vec{v}^n$, and $p^n$ the functions $c,\mu,\vec{v}$, and $p$ evaluated at $t^n$\,. With regularities \eqref{eq:CHNS:error_regularities}, it is straightforward to check that, for any $1 \leq n \leq N$\,, the weak solution $(c,\mu,\vec{v},p)$ to model problem \eqref{eq:CHNS:model} satisfies
\begin{subequations}\label{eq:CHNS:consistency}
\begin{eqnarray}
\big({\partial_t c}(t^n),\chi\big) + a_{\mathcal{D}}(\mu^n,\chi) + a_{\mathcal{A}}(c^n,\vec{v}^n,\chi) = 0, \quad \forall \chi \in S_h,\\
\big(\Phi_{+}\,\!'(c^n)+\Phi_{-}\,\!'(c^n),\varphi\big) + \kappa a_{\mathcal{D}}(c^n,\varphi) - (\mu^n,\varphi) = 0, \quad \forall \varphi \in S_h,\\
\begin{aligned}
\big({\partial_t \vec{v}}(t^n),\vec{\theta}\big) + a_{\mathcal{C}}(\vec{v}^n,\vec{v}^n,\vec{v}^n,\vec{\theta}) + \mu_\mathrm{s} a_\strain(\vec{v}^n, \vec{\theta}) \hspace*{7.85em} \\+ b_{\mathcal{P}}(p^n,\vec{\theta}) - b_\mathcal{I}(c^n,\mu^n,\vec{\theta}) = 0, \quad \forall \vec{\theta} \in \mathbf{X}_h,
\end{aligned}\\
b_{\mathcal{P}}(\phi,\vec{v}^n) = 0, \quad \forall \phi \in Q_h.
\end{eqnarray}
\end{subequations}
Before starting error analysis, let us briefly review several useful definitions and properties. 
Let $\Pi_h: L^2(\Omega) \rightarrow Q_h$ be the L$^2$ projection operator onto $Q_h$:
\begin{align*}
(\Pi_h{\omega}-\omega,\phi) = 0, && \forall \phi \in Q_h, \quad \forall\omega\in L^2(\Omega).
\end{align*}

Lax--Milgram theorem allows us to define an invertible operator $\mathcal{J}: M_h \rightarrow M_h$ via the following variational problem: given $\lambda \in M_h$\,, for any $\phi \in M_h$\,, find $\mathcal{J}(\lambda) \in M_h$ such that
\begin{equation}\label{eq:CHNS:define_operator_J}
a_{\mathcal{D}}(\phi,\mathcal{J}(\lambda)) = (\lambda,\phi).
\end{equation}
\begin{lemma}\label{lem:CHNS:error_property_of_J}
The operator $\mathcal{J}$ is linear and the identity \eqref{eq:CHNS:define_operator_J} still holds for any $\phi \in S_h$ and any $\lambda \in M_h$.  In addition, there exists a constant $C_1>0$ independent of mesh size $h$, such that 
\begin{align*}
\abs{(\lambda, \phi)} \leq C_1 \norm{\phi}{\mathrm{DG}} \norm{\mathcal{J}(\lambda)}{\mathrm{DG}}, &&
\forall \phi \in H^1(\setE_h)\,,\quad \forall \lambda \in M_h.
\end{align*} 
\end{lemma}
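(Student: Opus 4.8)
The plan is to handle the three assertions in turn, relying throughout on the symmetry, continuity, and coercivity of $a_{\mathcal{D}}$ recorded above. Linearity of $\mathcal{J}$ is the quickest: for $\lambda_1,\lambda_2\in M_h$ and scalars $\alpha,\beta$, the element $\alpha\mathcal{J}(\lambda_1)+\beta\mathcal{J}(\lambda_2)\in M_h$ satisfies $a_{\mathcal{D}}\big(\phi,\alpha\mathcal{J}(\lambda_1)+\beta\mathcal{J}(\lambda_2)\big)=\alpha(\lambda_1,\phi)+\beta(\lambda_2,\phi)=(\alpha\lambda_1+\beta\lambda_2,\phi)$ for every $\phi\in M_h$. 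Since the coercivity of $a_{\mathcal{D}}$ makes the defining problem \eqref{eq:CHNS:define_operator_J} uniquely solvable in $M_h$, this forces $\mathcal{J}(\alpha\lambda_1+\beta\lambda_2)=\alpha\mathcal{J}(\lambda_1)+\beta\mathcal{J}(\lambda_2)$.

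To extend \eqref{eq:CHNS:define_operator_J} to an arbitrary $\phi\in S_h$, I would split $\phi=\mathring{\phi}+\bar\phi$, where $\bar\phi=\frac{1}{\abs{\Omega}}\int_\Omega\phi$ is the mean and $\mathring{\phi}:=\phi-\bar\phi\in M_h$. Two observations close the gap. First, because $\lambda\in M_h$ has zero mean, $(\lambda,\bar\phi)=0$, so $(\lambda,\phi)=(\lambda,\mathring{\phi})$. Second, inspecting \eqref{eq:CHNS:DG_diffusion} shows $a_{\mathcal{D}}(\bar\phi,\chi)=0$ for every $\chi$, since a global constant has vanishing element gradients and vanishing jumps on all interior faces; hence $a_{\mathcal{D}}(\phi,\mathcal{J}(\lambda))=a_{\mathcal{D}}(\mathring{\phi},\mathcal{J}(\lambda))$. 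Applying \eqref{eq:CHNS:define_operator_J} to $\mathring{\phi}\in M_h$ then gives $a_{\mathcal{D}}(\phi,\mathcal{J}(\lambda))=(\lambda,\mathring{\phi})=(\lambda,\phi)$.

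The duality bound comes in two stages. For $\phi\in S_h$ the extended identity yields $\abs{(\lambda,\phi)}=\abs{a_{\mathcal{D}}(\phi,\mathcal{J}(\lambda))}$, and the continuity of $a_{\mathcal{D}}$ (constant $C_\alpha$), applicable since $\mathcal{J}(\lambda)\in M_h\subset S_h$, immediately gives $\abs{(\lambda,\phi)}\leq C_\alpha\norm{\phi}{\mathrm{DG}}\norm{\mathcal{J}(\lambda)}{\mathrm{DG}}$. For a general $\phi\in H^1(\setE_h)$, I would introduce the element-wise $L^2$ projection $R_h$ onto $S_h$. Because $\lambda\in S_h$, orthogonality gives $(\lambda,\phi)=(\lambda,R_h\phi)$, and since $R_h\phi\in S_h$ this reduces matters to the previous case, producing $\abs{(\lambda,\phi)}\leq C_\alpha\norm{R_h\phi}{\mathrm{DG}}\norm{\mathcal{J}(\lambda)}{\mathrm{DG}}$.

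The crux, and the step I expect to be the main obstacle, is therefore the $\mathrm{DG}$-stability of the local $L^2$ projection, $\norm{R_h\phi}{\mathrm{DG}}\leq C\norm{\phi}{\mathrm{DG}}$ with $C$ independent of $h$. I would prove it termwise on shape-regular meshes. For the volume part, combining the inverse inequality on $\IP_q(E)$ with the element-wise Poincar\'e inequality makes the powers of $h$ cancel (using that $R_h$ reproduces the element mean), giving $\norm{\grad R_h\phi}{L^2(E)}\leq C\norm{\grad\phi}{L^2(E)}$. For the penalty part, I would write each face jump as $\jump{R_h\phi}=\jump{\phi}-(\phi-R_h\phi)|_{E^-}+(\phi-R_h\phi)|_{E^+}$ on $e$, bound the one-sided projection errors by the trace inequality together with the $L^2$ and $H^1$ approximation properties of $R_h$ (the latter via the gradient stability just obtained), and verify that the $\sigma/h$-weighted face contributions are controlled by $\norm{\phi}{\mathrm{DG}}^2$. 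Combining the two stages yields the claim with $C_1=CC_\alpha$; the remaining bookkeeping is simply to confirm that the inverse, trace, Poincar\'e, and approximation constants depend only on the shape-regularity of $\setE_h$ and on $q$, not on $h$.
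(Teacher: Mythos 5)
Your proposal is correct and follows essentially the same route as the paper: the mean-value decomposition $\phi = \mathring{\phi} + \bar{\phi}$ to extend the identity \eqref{eq:CHNS:define_operator_J} to $S_h$, and then the element-wise $L^2$ projection onto $S_h$ combined with orthogonality (since $\lambda \in M_h \subset S_h$), the continuity of $a_{\mathcal{D}}$, and the DG-norm stability of that projection to get the duality bound. The only difference is that you supply the details of the projection stability and of the linearity argument, both of which the paper asserts without proof as ``easy to check/show.''
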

\begin{proof}
The linearity of the operator $\mathcal{J}$ is easy to check. 
For any $\phi \in S_h$ and any $\lambda \in M_h$\,, due to the fact $\phi - \frac{1}{\abs{\Omega}}\int_\Omega\phi$ belongs to $M_h$\,, we have
\begin{align*}
a_{\mathcal{D}}\big(\phi,\mathcal{J}(\lambda)\big)
&= a_{\mathcal{D}}\big(\phi - \frac{1}{\abs{\Omega}}\int_\Omega\phi, \mathcal{J}(\lambda)\big) + a_{\mathcal{D}}\big(\frac{1}{\abs{\Omega}}\int_\Omega\phi, \mathcal{J}(\lambda)\big) \\
&= (\lambda, \phi - \frac{1}{\abs{\Omega}}\int_\Omega\phi)
= (\lambda, \phi) - (\lambda, \frac{1}{\abs{\Omega}}\int_\Omega\phi)
= (\lambda, \phi).
\end{align*}
Let $\tilde{\Pi}_h: H^1(\setE_h) \rightarrow S_h$ denote the $L^2$ projection operator onto $S_h$.
It is easy to show that $\tilde{\Pi}_h$ is stable with respect to the DG norm, i.\,e., we have the inequality $\norm{\tilde{\Pi}_h \phi}{\mathrm{DG}} \leq C \norm{\phi}{\mathrm{DG}}$\,. Therefore, by triangular inequality, the definition of operator $\mathcal{J}$, and the continuity of $a_{\mathcal{D}}$, we obtain for any $\lambda$ in $M_h$\,:
\begin{align*}
\abs{(\lambda, \phi)} 
&\leq \abs{(\phi-\tilde{\Pi}_h\phi, \lambda)} + \abs{(\tilde{\Pi}_h\phi, \lambda)} 
= \abs{(\tilde{\Pi}_h\phi, \lambda)}\\
&= a_{\mathcal{D}}(\tilde{\Pi}_h \phi,\mathcal{J}(\lambda)) 
\leq C_\alpha \norm{\tilde{\Pi}_h\phi}{\mathrm{DG}}\norm{\mathcal{J}(\lambda)}{\mathrm{DG}},
\end{align*}
which concludes our proof.
\end{proof}
We recall the following approximation operator (see Lemma~6.1 in \cite{ChaabaneGiraultPuelzRiviere2017}).
\begin{lemma}
\label{lem:Rh}
There is an approximation operator $\mathcal{R}_h: H_0^1(\setE_h)^d \rightarrow \mathbf{X}_h$
satisfying
\begin{equation}\label{eq:approxRh1}
b_\mathcal{P}(\phi,\mathcal{R}_h(\vec{v})-\vec{v}) = 0,\quad \forall\vec{v}\in H_0^1(\setE_h)^d, \quad \forall \phi\in Q_h,
\end{equation}
and for all $E$ in $\setE_h$, for all $\vec{v}$ in $H_0^1(\setE_h)^d \cap W^{s,r}(E)^d$, $1 \leq r \leq \infty$, $1 \leq s \leq q+1$,
\begin{align}\label{eq:approxRh2}
\begin{split}
\norm{\mathcal{R}_h(\vec{v})-\vec{v}}{L^r(E)} &\leq C h^s |\vec{v}|_{W^{s,r}(\Delta_E)},\\
\norm{\nabla(\mathcal{R}_h(\vec{v})-\vec{v})}{L^r(E)} &\leq C h^{s-1} \vert\vec{v}\vert_{W^{s,r}(\Delta_E)}, 
\end{split}
\end{align}
with a constant $C$ independent of mesh size $h$ and $E$, where $\Delta_E \subset \Omega$ is a macro-element. We also have for all $s$, $1 \leq s \leq q+1$,
\begin{equation}
\label{eq:approxRh3}
\forall\vec{v}\in H_0^1(\setE_h)^d \cap H^s(\Omega)^d,\quad \Vert \mathcal{R}_h(\vec{v})-\vec{v}\Vert_{\mathrm{DG}} \leq C h^{s-1} \vert\vec{v}\vert_{H^s(\Omega)}.
\end{equation}
\end{lemma}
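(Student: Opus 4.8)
The approximation operator of \cref{lem:Rh} is established in \cite{ChaabaneGiraultPuelzRiviere2017}; here I outline how I would construct it. The plan is to build $\mathcal{R}_h$ as a Fortin-type operator: first approximate $\vec{v}$ with a standard quasi-interpolant of optimal accuracy, then correct it, using the discrete inf-sup stability of the pair $(\mathbf{X}_h,Q_h)$, so that the divergence-compatibility identity \eqref{eq:approxRh1} holds exactly.

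First I would fix a componentwise quasi-interpolation operator $\tilde{\mathcal{R}}_h:H_0^1(\setE_h)^d\rightarrow\mathbf{X}_h$ (for instance an averaged Lagrange or Scott--Zhang interpolant, or the elementwise $L^2$ projection onto $\IP_q(E)^d$). Scaling to a reference element, together with the Bramble--Hilbert lemma and mesh shape-regularity, yields for $1\leq r\leq\infty$ and $1\leq s\leq q+1$ the analogues of the bounds \eqref{eq:approxRh2} and \eqref{eq:approxRh3} with $\mathcal{R}_h$ replaced by $\tilde{\mathcal{R}}_h$. This preliminary operator will not in general satisfy \eqref{eq:approxRh1}.

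Next I would enforce \eqref{eq:approxRh1} by a correction. Set $\vec{d}=\vec{v}-\tilde{\mathcal{R}}_h(\vec{v})$ and recall that $\jump{\vec{v}}=\vec{0}$ on interior faces and $\vec{v}=\vec{0}$ on $\partial\Omega$, so that the functional $\phi\mapsto b_{\mathcal{P}}(\phi,\vec{d})$ on $Q_h$ is bounded by $C\norm{\vec{d}}{\mathrm{DG}}\norm{\phi}{L^2(\Omega)}$ after estimating the volume and face terms of $b_{\mathcal{P}}$ with Cauchy--Schwarz's inequality and a trace inequality. Since the inf-sup condition of \cref{lem:infsup} makes $\vec{\theta}\mapsto b_{\mathcal{P}}(\cdot,\vec{\theta})$ surjective from $\mathbf{X}_h$ onto the dual of $Q_h$ with a bounded right inverse, there exists $\vec{w}_h\in\mathbf{X}_h$ (chosen in the orthogonal complement of $\mathbf{V}_h$) with $b_{\mathcal{P}}(\phi,\vec{w}_h)=b_{\mathcal{P}}(\phi,\vec{d})$ for all $\phi\in Q_h$ and
\[
\norm{\vec{w}_h}{\mathrm{DG}}\leq\frac{1}{\beta}\sup_{\phi\in Q_h}\frac{b_{\mathcal{P}}(\phi,\vec{d})}{\norm{\phi}{L^2(\Omega)}}\leq C\norm{\vec{d}}{\mathrm{DG}}.
\]
Defining $\mathcal{R}_h(\vec{v})=\tilde{\mathcal{R}}_h(\vec{v})+\vec{w}_h$ gives \eqref{eq:approxRh1} immediately, and the triangle inequality combined with the bound on $\vec{w}_h$ and the approximation properties of $\tilde{\mathcal{R}}_h$ yields the global estimate \eqref{eq:approxRh3}.

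The main obstacle is the \emph{local} estimates \eqref{eq:approxRh2}: the Fortin correction $\vec{w}_h$ is constructed globally, so the argument above only controls it in the global $\mathrm{DG}$ norm rather than element by element. To obtain the sharp $L^r$ bounds on each $E$ I would instead construct $\mathcal{R}_h$ directly as a moment-preserving interpolant of BDM type, whose degrees of freedom are chosen to include the face normal-jump moments tested against traces of $Q_h$ and the interior divergence moments tested against $\IP_{q-1}(E)$. With this choice \eqref{eq:approxRh1} holds locally by construction, and the bounds \eqref{eq:approxRh2} follow from a reference-element Bramble--Hilbert estimate transported to each $E$ through the macro-element $\Delta_E$ using shape-regularity, exactly as in \cite{ChaabaneGiraultPuelzRiviere2017}.
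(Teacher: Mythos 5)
The paper does not actually prove this lemma: it recalls it verbatim as Lemma~6.1 of \cite{ChaabaneGiraultPuelzRiviere2017}, so your proposal has to be compared with the construction in that reference. Your second construction is essentially that one: a locally defined, moment-preserving interpolant in the spirit of Crouzeix--Raviart and Girault--Rivi\`ere--Wheeler, whose degrees of freedom fix the face moments of the normal component against $\IP_{q-1}(e)$ and the interior divergence moments against $\IP_{q-1}(E)$, so that $b_\mathcal{P}(\phi,\mathcal{R}_h(\vec{v})-\vec{v})=0$ holds for every $\phi\in Q_h$ term by term in \eqref{eq:CHNS:DG_pressure}, while the bounds \eqref{eq:approxRh2}--\eqref{eq:approxRh3} follow from Bramble--Hilbert and scaling on the macro-elements $\Delta_E$. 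Your first, Fortin-type attempt (quasi-interpolant plus an inf-sup correction via \cref{lem:infsup}) is sound as far as it goes, and you correctly diagnose its limitation yourself: the correction is global, so it yields \eqref{eq:approxRh1} and \eqref{eq:approxRh3} but cannot produce the elementwise estimates \eqref{eq:approxRh2}; since you discard it in favor of the local construction, this detour costs nothing. Two caveats on the part you keep. First, your appeal to $\jump{\vec{v}}=\vec{0}$ is legitimate only for $\vec{v}\in H_0^1(\Omega)^d$, whereas the lemma is stated on the broken space $H_0^1(\setE_h)^d$; this is immaterial for the paper's use of $\mathcal{R}_h$ (it is applied only to the exact velocity $\vec{v}^n$), but it should either be stated as a restriction or be handled by using single-valued averaged face moments. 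Second, the genuine technical content of the cited lemma --- unisolvence of the chosen degrees of freedom on $\IP_q(E)^d$ (dimension count and completion to a basis) and the $r=\infty$ case of \eqref{eq:approxRh2} --- is deferred to \cite{ChaabaneGiraultPuelzRiviere2017} rather than verified; as an outline this is acceptable, and it is no less than what the paper itself provides.
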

With the operator $\mathcal{R}_h$, we have a bound for the form $a_\mathcal{C}$ (see Proposition 6.2 in \cite{ChaabaneGiraultPuelzRiviere2017}).
\begin{lemma}[Bounds of $a_{\mathcal{C}}$]\label{lem:CHNS:boundconvection2}
There exists a constant $C$ independent of mesh size $h$ such that for any $\vec{u}$ in $(L^\infty(\Omega)\cap W^{1,3}(\Omega)\cap H^{3/2}(\Omega))^d$,
any $\vec{v}_h$ in $\mathbf{V}_h$ and any $\vec{w}_h, \vec{z}_h$ in $\vec{X}_h$, the bound holds
\begin{multline*}
|a_\mathcal{C}(\vec{z}_h,\vec{v}_h,\vec{u}-\mathcal{R}_h\vec{u},\vec{w}_h)| \leq C \left( \Vert \vec{u}-\mathcal{R}_h\vec{u}\Vert_{L^\infty(\Omega)}
+ \vert \vec{u}-\mathcal{R}_h\vec{u}\vert_{W^{1,3}(\Omega)} + \vert \vec{u}\vert_{H^{3/2}(\Omega)}\right) \\
\times \Vert \vec{v}_h\Vert_{L^2(\Omega)} \Vert \vec{w}_h\Vert_{\mathrm{DG}}.
\end{multline*}
\end{lemma}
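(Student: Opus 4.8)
I would prove \cref{lem:CHNS:boundconvection2} by expanding $a_{\mathcal C}(\vec z_h,\vec v_h,\vec{\eta},\vec w_h)$ with $\vec{\eta}:=\vec u-\mathcal R_h\vec u$ into the four pieces of \eqref{eq:CHNS:DG_convection} and estimating each so that $\vec v_h$ never carries a derivative (this is what will yield $\norm{\vec v_h}{L^2(\Omega)}$ rather than $\norm{\vec v_h}{\mathrm{DG}}$) and every jump of $\vec{\eta}$ is measured in $L^2$ of the faces (this is what the $H^{3/2}$ regularity is for). The estimate closely parallels the companion bound \cref{lem:CHNS:boundconvection}; the only genuinely new feature is that $\vec{\eta}$, unlike a smooth $\vec u$, is discontinuous across interior faces, so its face jumps no longer vanish. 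Note that merely subtracting, $a_{\mathcal C}(\vec z_h,\vec v_h,\vec u,\vec w_h)-a_{\mathcal C}(\vec z_h,\vec v_h,\mathcal R_h\vec u,\vec w_h)$, does not help, since the second form still has a discontinuous third argument, so I would estimate the difference directly.

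First I would handle the two volume contributions. The term $\sum_{E\in\setE_h}\int_E(\vec v_h\cdot\grad\vec{\eta})\cdot\vec w_h$ already has no derivative on $\vec v_h$, so H\"older with exponents $2,3,6$ bounds it by $\norm{\vec v_h}{L^2(\Omega)}\abs{\vec{\eta}}_{W^{1,3}(\Omega)}\norm{\vec w_h}{L^6(\Omega)}$, and Poincar\'e's inequality gives $\norm{\vec w_h}{L^6(\Omega)}\le C_P\norm{\vec w_h}{\mathrm{DG}}$. For $\frac{1}{2}\sum_{E\in\setE_h}\int_E(\div\vec v_h)\,\vec{\eta}\cdot\vec w_h$ I would integrate by parts element-wise to move the derivative off $\vec v_h$, producing the volume part $-\frac{1}{2}\sum_{E\in\setE_h}\int_E\vec v_h\cdot\grad(\vec{\eta}\cdot\vec w_h)$ and the element fluxes $\frac{1}{2}\sum_{E\in\setE_h}\int_{\partial E}(\vec v_h\cdot\normal_E)(\vec{\eta}\cdot\vec w_h)$. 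The new volume part splits via the product rule into one piece controlled through $\abs{\vec{\eta}}_{W^{1,3}(\Omega)}$ and one through $\norm{\vec{\eta}}{L^\infty(\Omega)}$, both again carrying only $\norm{\vec v_h}{L^2(\Omega)}$ and $\norm{\vec w_h}{\mathrm{DG}}$.

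Next I would collect all face contributions: the upwind term, the term $-\frac{1}{2}\sum_{e\in\Gammah}\int_e\jump{\vec v_h\cdot\normal_e}\avg{\vec{\eta}\cdot\vec w_h}$, and the element fluxes just produced. Summing the fluxes over elements and applying $\jump{fg}=\jump{f}\avg{g}+\avg{f}\jump{g}$, the $\jump{\vec v_h\cdot\normal_e}\avg{\vec{\eta}\cdot\vec w_h}$ parts cancel exactly as in the smooth case, leaving $\frac{1}{2}\sum_{e}\int_e\avg{\vec v_h\cdot\normal_e}\jump{\vec{\eta}\cdot\vec w_h}$. Expanding $\jump{\vec{\eta}\cdot\vec w_h}=\jump{\vec{\eta}}\cdot\avg{\vec w_h}+\avg{\vec{\eta}}\cdot\jump{\vec w_h}$, the piece $\avg{\vec{\eta}}\cdot\jump{\vec w_h}$ is controlled by $\norm{\vec{\eta}}{L^\infty(\Omega)}$ together with a discrete trace bound on $\avg{\vec v_h}$ (cost $h^{-1/2}$) and the jump $\jump{\vec w_h}$ absorbed into $\norm{\vec w_h}{\mathrm{DG}}$ (gain $h^{1/2}$), so the powers of $h$ balance. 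Boundary faces are treated the same way using the convention that the exterior trace vanishes on $\partial\Omega$ and that $\mathcal R_h$ acts on $H_0^1(\setE_h)^d$.

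The crux, and the step I expect to be hardest, is the pair of genuinely new face terms carrying $\jump{\vec{\eta}}$: the upwind term and the residual $\frac{1}{2}\sum_e\int_e\avg{\vec v_h\cdot\normal_e}\,\jump{\vec{\eta}}\cdot\avg{\vec w_h}$. Here $\jump{\vec{\eta}}$ must be kept in $L^2$ of each face, because \cref{lem:Rh} and \eqref{eq:approxRh3} with $s=\frac{3}{2}$ give $\norm{\vec{\eta}}{\mathrm{DG}}\le C h^{1/2}\abs{\vec u}_{H^{3/2}(\Omega)}$, whence $\big(\sum_{e}\norm{\jump{\vec{\eta}}}{L^2(e)}^2\big)^{1/2}\le C\,h\,\abs{\vec u}_{H^{3/2}(\Omega)}$, whereas placing $\jump{\vec{\eta}}$ in $L^\infty$ only yields an $O(1)$ factor and loses a power of $h$. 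This single factor $h$ then has to be spent against the two discrete face traces of $\avg{\vec v_h}$ and of $\vec w_h$, each costing $h^{-1/2}$ while delivering $\norm{\vec v_h}{L^2(\Omega)}$ and $\norm{\vec w_h}{\mathrm{DG}}$ respectively. Making these powers of $h$ cancel exactly, while simultaneously routing $\vec v_h$ into its $L^2(\Omega)$ norm and $\vec w_h$ into its DG norm, is the delicate bookkeeping: it forces a careful choice of H\"older exponents on each face and a sharp use of the trace and inverse inequalities, and it is precisely this balance that requires $H^{3/2}$ rather than merely $H^1$ regularity on $\vec u$. Collecting the $L^\infty$, $W^{1,3}$, and $H^{3/2}$ contributions then gives the stated bound.
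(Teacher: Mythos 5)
Your proposal is correct, but there is nothing in the paper to compare it against line by line: the paper does not prove this lemma, it simply invokes Proposition~6.2 of \cite{ChaabaneGiraultPuelzRiviere2017}. Your argument is essentially a reconstruction of that standard proof, and its structure checks out: H\"older with exponents $(2,3,6)$ plus Poincar\'e for the first volume term; elementwise integration by parts of $\tfrac12\sum_{E}\int_E(\div\vec{v}_h)\,\vec{\eta}\cdot\vec{w}_h$; exact cancellation of the $\jump{\vec{v}_h\cdot\normal_e}\avg{\vec{\eta}\cdot\vec{w}_h}$ fluxes against the last term of \eqref{eq:CHNS:DG_convection} (which is precisely why that term is built into the form); and the jump-product identity for what survives. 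The one step you leave schematic, the $h$-bookkeeping on the faces carrying $\jump{\vec{\eta}}$, does close: on each face take $\jump{\vec{\eta}}$ in $L^2(e)$, $\avg{\vec{v}_h}$ in $L^2(e)$ (discrete trace, cost $h^{-1/2}$, delivering $\norm{\vec{v}_h}{L^2(\Omega)}$), and $\avg{\vec{w}_h}$ in $L^\infty(e)$, bounded via the inverse inequality $\norm{\vec{w}_h}{L^\infty(E)}\leq C h^{-d/6}\norm{\vec{w}_h}{L^6(E)}$ and Poincar\'e's inequality by $Ch^{-d/6}\norm{\vec{w}_h}{\mathrm{DG}}$; combined with your bound $\big(\sum_e\norm{\jump{\vec{\eta}}}{L^2(e)}^2\big)^{1/2}\leq Ch\abs{\vec{u}}_{H^{3/2}(\Omega)}$ from \eqref{eq:approxRh3} with $s=\tfrac32$, the net power of $h$ is $1-\tfrac12-\tfrac{d}{6}\geq 0$, with exact balance at $d=3$ and $h^{1/6}$ to spare at $d=2$. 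Two remarks: your route never uses $\vec{v}_h\in\mathbf{V}_h$ (the integration by parts makes the discrete divergence-free hypothesis unnecessary), so you in fact prove the bound for all $\vec{v}_h\in\mathbf{X}_h$; and your observation that one must keep $\jump{\vec{\eta}}$ in $L^2$ of the faces rather than $L^\infty$ is exactly the reason the $H^{3/2}$ regularity appears in the statement.
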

Recall that $\mathcal{P}_h c^n$ and $\mathcal{P}_h \mu^n$ are the elliptic projections of $c^n$ and $\mu^n$, 
which are defined in \eqref{eq:CHNS:elliptic_proj_c}.  The DG error analysis for elliptic problems yields the following error bounds \cite{riviere2008}.
\begin{lemma}\label{lem:CHNS:error_elliptic_projection}
There exist a constant $C$, independent of mesh size $h$ and time step size $\tau$, such that for all $0 \leq n \leq N$
\begin{align*}
\norm{c^n - \mathcal{P}_hc^n}{\mathrm{DG}} &\leq Ch^q \norm{c}{L^\infty(0,T;\,H^{q+1}(\Omega))},\\ 
\norm{\mu^n - \mathcal{P}_h\mu^n}{\mathrm{DG}} &\leq Ch^q \norm{\mu}{L^\infty(0,T;\,H^{q+1}(\Omega))},\\
\norm{\delta_\tau (c^n - \mathcal{P}_hc^n)}{L^2(\Omega)} &\leq C h^q \norm{\partial_t c}{L^\infty(0,T;\,H^{q+1}(\Omega))}.
\end{align*}
\end{lemma}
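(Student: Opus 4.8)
The plan is to treat the first two bounds as a C\'ea-type estimate for the discontinuous Galerkin elliptic projection, and then to obtain the third bound from the first by exploiting the linearity of $\mathcal{P}_h$. The only structural facts I need are the Galerkin orthogonality built into the definition \eqref{eq:CHNS:elliptic_proj_c}, namely $a_{\mathcal{D}}(c^n - \mathcal{P}_h c^n,\chi) = 0$ for all $\chi \in S_h$, the coercivity and continuity of $a_{\mathcal{D}}$ recalled above, the zero-average constraint $(c^n - \mathcal{P}_h c^n,1)=0$ (which makes $\norm{\cdot}{\mathrm{DG}}$ a genuine norm on the pertinent subspace of $S_h$), and standard polynomial approximation and trace estimates.

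First I would prove the bound for $c^n$; the estimate for $\mu^n$ is word-for-word the same, since $\mathcal{P}_h\mu^n$ is defined through the same form $a_{\mathcal{D}}$. Let $\tilde{\Pi}_h c^n\in S_h$ be the $L^2$ projection of $c^n$ onto $S_h$, which shares the mean of $c^n$. By the triangle inequality,
\begin{equation*}
\norm{c^n - \mathcal{P}_h c^n}{\mathrm{DG}} \leq \norm{c^n - \tilde{\Pi}_h c^n}{\mathrm{DG}} + \norm{\tilde{\Pi}_h c^n - \mathcal{P}_h c^n}{\mathrm{DG}}.
\end{equation*}
The first summand is bounded by $C h^q |c^n|_{H^{q+1}(\Omega)}$ through the usual approximation estimate for the broken gradient together with a trace estimate for the jump contribution in $\norm{\cdot}{\mathrm{DG}}$. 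For the second summand, the difference $\tilde{\Pi}_h c^n - \mathcal{P}_h c^n$ lies in $S_h$, so coercivity gives
\begin{equation*}
K_\alpha\norm{\tilde{\Pi}_h c^n - \mathcal{P}_h c^n}{\mathrm{DG}}^2 \leq a_{\mathcal{D}}(\tilde{\Pi}_h c^n - \mathcal{P}_h c^n,\tilde{\Pi}_h c^n - \mathcal{P}_h c^n).
\end{equation*}
Writing the first slot as $(\tilde{\Pi}_h c^n - c^n) + (c^n - \mathcal{P}_h c^n)$ and invoking Galerkin orthogonality to kill the second piece, then applying continuity of $a_{\mathcal{D}}$, I get $K_\alpha\norm{\tilde{\Pi}_h c^n - \mathcal{P}_h c^n}{\mathrm{DG}} \leq C_\alpha\norm{\tilde{\Pi}_h c^n - c^n}{\mathrm{DG}}$. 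Combining the two summands and bounding $|c^n|_{H^{q+1}(\Omega)}$ uniformly in $n$ by $\norm{c}{L^\infty(0,T;\,H^{q+1}(\Omega))}$ yields the first (and, by the identical argument, the second) bound.

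For the third bound I would use that $\mathcal{P}_h$ is linear, so it commutes with the difference operator, giving $\delta_\tau(c^n - \mathcal{P}_h c^n) = (I - \mathcal{P}_h)\,\delta_\tau c^n$. To pass to $L^2$ while only invoking Poincar\'e on discrete functions, I insert the $L^2$ projection and split $(I-\mathcal{P}_h)\delta_\tau c^n = (\delta_\tau c^n - \tilde{\Pi}_h\delta_\tau c^n) + (\tilde{\Pi}_h\delta_\tau c^n - \mathcal{P}_h\delta_\tau c^n)$. The first term is $O(h^{q+1})$ in $L^2$ by the $L^2$-projection estimate, and the second term lies in $S_h$ with zero mean, so Poincar\'e's inequality followed by the $\mathrm{DG}$ bound just proved (applied with $\delta_\tau c^n$ in place of $c^n$) controls it by $C h^q |\delta_\tau c^n|_{H^{q+1}(\Omega)}$. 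Finally, the representation $\delta_\tau c^n = \tau^{-1}\int_{t^{n-1}}^{t^n}\partial_t c\,\dd t$ together with Minkowski's integral inequality in the $H^{q+1}$ seminorm gives $|\delta_\tau c^n|_{H^{q+1}(\Omega)} \leq \norm{\partial_t c}{L^\infty(0,T;\,H^{q+1}(\Omega))}$, which closes the estimate.

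The one step that is not pure bookkeeping is the continuity estimate used on $a_{\mathcal{D}}(\tilde{\Pi}_h c^n - c^n,\cdot)$: the continuity of $a_{\mathcal{D}}$ recalled above is stated for arguments in $S_h$, whereas here the first argument carries the interpolation error of an $H^{q+1}$ function, whose normal-flux face terms $\avg{\grad(\cdot)\cdot\normal_e}$ are not measured by $\norm{\cdot}{\mathrm{DG}}$. The clean remedy, which is the heart of the elliptic DG theory of \cite{riviere2008}, is to enlarge the norm by the consistency contribution $\sum_{e}h\,\norm{\avg{\grad(\cdot)\cdot\normal_e}}{L^2(e)}^2$, to verify boundedness of $a_{\mathcal{D}}$ in that augmented norm, and to note that the augmented interpolation error of $c^n$ is still $O(h^q|c^n|_{H^{q+1}(\Omega)})$ by trace estimates. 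I would either cite this directly or reproduce the short augmented-norm computation; everything else is routine.
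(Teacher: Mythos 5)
Your proof is correct: the C\'ea-type argument (coercivity plus Galerkin orthogonality from \eqref{eq:CHNS:elliptic_proj_c}, with the boundedness of $a_{\mathcal{D}}$ taken in the augmented norm you flag in your final paragraph, since the plain $S_h$-continuity indeed does not cover the non-discrete first argument), together with the commutation $\delta_\tau(c^n-\mathcal{P}_h c^n)=(I-\mathcal{P}_h)\,\delta_\tau c^n$, the zero-mean Poincar\'e step, and Minkowski's integral inequality, gives all three estimates. The paper offers no proof of this lemma --- it simply cites the standard elliptic DG error analysis of \cite{riviere2008} --- and your argument is exactly that standard analysis written out, so the two approaches coincide.
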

We define the projection errors and the discretization errors as follows
\begin{align*}
\zeta_c^n  &= c^n - \mathcal{P}_hc^n, & 
\xi_c^n &= \mathcal{P}_hc^n - c_h^n,\\ 
\zeta_\mu^n &= \mu^n - \mathcal{P}_h\mu^n, & 
\xi_\mu^n &= \mathcal{P}_h\mu^n - \mu_h^n,\\ 
\bfzeta_{\vec{v}}^n  &= \vec{v}^n - \mathcal{R}_h\vec{v}^n, & 
\bfxi_{\vec{v}}^n &= \mathcal{R}_h\vec{v}^n - \vec{v}_h^n,\\ 
\zeta_p^n &= p^n - \Pi_h p^n, &
\xi_p^n &= \Pi_h p^n - p_h^n. 
\end{align*}
Now we are in the position of stating the error equation. We note that for all $n \geq 1$
\begin{align*}
a_{\mathcal{D}}(\zeta_\mu^n,\chi)=0, \quad b_{\mathcal{P}}(\phi,\bfzeta_{\vec{v}}^n)=0, && \forall \chi \in S_h, \quad \forall \phi \in Q_h.
\end{align*}
Therefore, from \eqref{eq:CHNS:DGscheme} and \eqref{eq:CHNS:consistency}, the error equation becomes, for any $\chi \in S_h$\,, $\varphi \in S_h$\,, $\vec{\theta} \in \mathbf{X}_h$\,, and $\phi \in Q_h$\,:
\begin{subequations}
\begin{eqnarray}
\begin{aligned}\label{eq:CHNS:error_equation1}
(\delta_\tau \xi_c^n,\chi) + a_{\mathcal{D}}(\xi_\mu^n,\chi) = \big(\delta_\tau c^n -(\partial_t c)^n - \delta_\tau\zeta_c^n,\chi\big) \hspace*{7.5em}
\\
- a_{\mathcal{A}}(c^n,\vec{v}^n,\chi) + a_{\mathcal{A}}(c_h^{n-1},\vec{v}_h^n,\chi), 
\end{aligned}\\
\begin{aligned}\label{eq:CHNS:error_equation2}
\kappa a_{\mathcal{D}}(\xi_c^n,\varphi) - (\xi_\mu^n,\varphi) = (\zeta_\mu^n,\varphi) \hspace*{15.9em}\\ 
+ \big(\Phi_+\,\!'(c_h^n)-\Phi_+\,\!'(c^n),\varphi\big) + \big(\Phi_-\,\!'(c_h^{n-1})-\Phi_-\,\!'(c^{n}),\varphi\big), 
\end{aligned}\\
\begin{aligned}\label{eq:CHNS:error_equation3}
(\delta_\tau\bfxi_{\vec{v}}^n,\vec{\theta}) 
+ \mu_\mathrm{s} a_\strain(\bfxi_{\vec{v}}^n, \vec{\theta}) 
+ b_{\mathcal{P}}(\xi_p^n,\vec{\theta}) 
= \big(\delta_\tau\vec{v}^n-(\partial_t \vec{v})^n-\delta_\tau\bfzeta_{\vec{v}}^n ,\vec{\theta}\big) \hspace*{1.275em}\\ 
- \mu_\mathrm{s} a_\strain(\bfzeta_{\vec{v}}^n, \vec{\theta}) 
- b_{\mathcal{P}}(\zeta_p^n,\vec{\theta}) 
- a_{\mathcal{C}}(\vec{v}^n,\vec{v}^n,\vec{v}^n,\vec{\theta}) \hspace*{7.5em}\\
+ a_{\mathcal{C}}(\vec{v}_h^{n-1},\vec{v}_h^{n-1},\vec{v}_h^n,\vec{\theta})
+ b_\mathcal{I}(c^n,\mu^n,\vec{\theta}) - b_\mathcal{I}(c_h^{n-1},\mu_h^{n},\vec{\theta}), 
\end{aligned}\\
b_{\mathcal{P}}(\phi,\bfxi_{\vec{v}}^n) = 0. \label{eq:CHNS:error_equation4}
\end{eqnarray}
\end{subequations}
%
%
%
We now state the main theorem. 
\begin{theorem}
Suppose $(c,\mu,\vec{v},p)$ is a weak solution of \eqref{eq:CHNS:consistency} with regularity \eqref{eq:CHNS:error_regularities}. Then, under \cref{as:CHNS:assumptionA} and sufficiently small time step size $\tau$, there exists a constant $C$ independent of mesh size $h$ and time step size $\tau$ such that for any $m \geq 1$
\begin{align*}
\max_{1\leq n\leq m}\Big(\norm{\xi_c^n }{\mathrm{DG}}^2 + \norm{\bfxi_{\vec{v}}^n}{L^2(\Omega)}^2\Big) 
+ \tau\sum_{n=1}^{m}\norm{\bfxi_{\vec{v}}^n}{\mathrm{DG}}^2 &\leq C(\tau^2 + h^{2q}),\\
\tau \sum_{n=1}^m \norm{\xi_\mu^n}{\mathrm{DG}}^2 &\leq C (\tau^2 + h^{2q}).
\end{align*}
\end{theorem}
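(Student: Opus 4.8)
The plan is to reproduce, at the level of the errors, the discrete energy identity used in the proof of \cref{thm:CHNS:dis_energy_dissipation}. Concretely, I would test the error equations \eqref{eq:CHNS:error_equation1}--\eqref{eq:CHNS:error_equation4} with $\chi=\xi_\mu^n$, $\varphi=\delta_\tau\xi_c^n$, $\vec{\theta}=\bfxi_{\vec{v}}^n$, and $\phi=-\xi_p^n$ respectively, and add the four identities. Exactly as in the stability proof, the two copies of the mixed term $(\delta_\tau\xi_c^n,\xi_\mu^n)$ cancel, and the pressure contribution $b_{\mathcal{P}}(\xi_p^n,\bfxi_{\vec{v}}^n)$ vanishes because \eqref{eq:CHNS:error_equation4} makes $\bfxi_{\vec{v}}^n$ discretely divergence free while $\xi_p^n\in Q_h$. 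Invoking the coercivity of $a_{\mathcal{D}}$ and $a_\strain$ (\cref{lem:astraincoer}) together with the positivity $a_{\mathcal{C}}(\vec{v}_h^{n-1},\vec{v}_h^{n-1},\bfxi_{\vec{v}}^n,\bfxi_{\vec{v}}^n)\ge 0$ of \cref{lem:CHNS:positivity_convection}, the left-hand side controls
\begin{multline*}
K_\alpha\norm{\xi_\mu^n}{\mathrm{DG}}^2
+ \frac{\kappa}{2\tau}\big(a_{\mathcal{D}}(\xi_c^n,\xi_c^n)-a_{\mathcal{D}}(\xi_c^{n-1},\xi_c^{n-1})\big)\\
+ \frac{1}{2\tau}\big(\norm{\bfxi_{\vec{v}}^n}{L^2(\Omega)}^2-\norm{\bfxi_{\vec{v}}^{n-1}}{L^2(\Omega)}^2\big)
+ \mu_\mathrm{s}K_\strain\norm{\bfxi_{\vec{v}}^n}{\mathrm{DG}}^2 .
\end{multline*}
The two telescoping terms are precisely what produce $\max_n\norm{\xi_c^n}{\mathrm{DG}}^2$ and $\max_n\norm{\bfxi_{\vec{v}}^n}{L^2(\Omega)}^2$, whereas the coercive terms, after multiplying by $\tau$ and summing $n=1,\dots,m$, deliver $\tau\sum\norm{\xi_\mu^n}{\mathrm{DG}}^2$ and $\tau\sum\norm{\bfxi_{\vec{v}}^n}{\mathrm{DG}}^2$.

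The right-hand side splits into four groups. The purely linear residuals — the temporal truncations $\delta_\tau c^n-(\partial_t c)^n$ and $\delta_\tau\vec{v}^n-(\partial_t\vec{v})^n$, the projection increments $\delta_\tau\zeta_c^n$ and $\delta_\tau\bfzeta_{\vec{v}}^n$, and the terms $\mu_\mathrm{s}a_\strain(\bfzeta_{\vec{v}}^n,\bfxi_{\vec{v}}^n)$, $b_{\mathcal{P}}(\zeta_p^n,\bfxi_{\vec{v}}^n)$, $(\zeta_\mu^n,\delta_\tau\xi_c^n)$ — are handled by Taylor expansion in time using the regularity \eqref{eq:CHNS:error_regularities}, the elliptic-projection bounds of \cref{lem:CHNS:error_elliptic_projection}, and the interpolation estimates of \cref{lem:Rh}; each is $O(\tau+h^q)$ in the relevant norm, so Cauchy--Schwarz, Poincar\'e, and Young yield $C(\tau^2+h^{2q})$ plus fractions of the coercive terms absorbed on the left. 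The convection mismatch $a_{\mathcal{C}}(\vec{v}_h^{n-1},\vec{v}_h^{n-1},\vec{v}_h^n,\bfxi_{\vec{v}}^n)-a_{\mathcal{C}}(\vec{v}^n,\vec{v}^n,\vec{v}^n,\bfxi_{\vec{v}}^n)$ is decomposed in the standard Navier--Stokes way, writing $\vec{v}_h^n=\mathcal{R}_h\vec{v}^n-\bfxi_{\vec{v}}^n$ to expose the favorable diagonal piece and bounding the remainder through \cref{lem:CHNS:continuityaC,lem:CHNS:boundconvection,lem:CHNS:boundconvection2}, with the a priori norms of $\vec{v}_h^{n-1}$ furnished by \cref{thm:CHNS:stability_bound1}. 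For the coupling terms from \eqref{eq:CHNS:error_equation1} and \eqref{eq:CHNS:error_equation3} I would exploit the designed identity $a_{\mathcal{A}}=b_\mathcal{I}$ of \cref{rem:CHNS:relation_aA_bI} to cancel the matching discrete contributions, then split the rest into projection errors (via \cref{lem:CHNS:error_elliptic_projection,lem:Rh}) and genuine discretization errors estimated with the boundedness of $a_{\mathcal{A}}$ in \cref{lem:CHNS:boundedness_aA}.

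The fourth group is the crux and the \emph{main obstacle}: the nonlinear chemical-energy residual $\big(\Phi_{+}\,\!'(c_h^n)-\Phi_{+}\,\!'(c^n)+\Phi_{-}\,\!'(c_h^{n-1})-\Phi_{-}\,\!'(c^n),\,\delta_\tau\xi_c^n\big)$ from \eqref{eq:CHNS:error_equation2}. It is paired with the discrete time derivative $\delta_\tau\xi_c^n$, whose norm is not controlled by the energy, so naive Cauchy--Schwarz fails. The remedy is to first write $\tau\sum_n(\,\cdot\,,\delta_\tau\xi_c^n)=\sum_n(\,\cdot\,,\xi_c^n-\xi_c^{n-1})$ and perform summation by parts in time. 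This trades the uncontrolled increment for a boundary term $(\,\cdot\,,\xi_c^m)$, absorbed into $\max_n\norm{\xi_c^n}{\mathrm{DG}}^2$ by Young's inequality, and for time increments of the nonlinearity paired with $\xi_c^{n-1}$. The spatial content of those increments is controlled by the Lipschitz bounds of \cref{as:CHNS:assumptionA}, while their temporal content amounts to differentiating $\Phi_{-}\,\!'$ once and the explicit term $\Phi_{-}\,\!'(c_h^{n-1})$ effectively twice in time — which is exactly where the assumed boundedness of $\Phi_-\,\!''$ and $\Phi_-\,\!'''$ enters, converting them into $O(\tau)$ consistency contributions through \eqref{eq:CHNS:error_regularities}. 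I expect this interplay of the convex-concave splitting with the summation by parts to be the most delicate part of the argument.

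Collecting all contributions gives an inequality whose telescoped and coercive left-hand sides are dominated by $C(\tau^2+h^{2q})+C\tau\sum_{n=1}^m\big(\norm{\xi_c^n}{\mathrm{DG}}^2+\norm{\bfxi_{\vec{v}}^n}{L^2(\Omega)}^2\big)$. Since $\xi_c^0=0$ by the choice $c_h^0=\mathcal{P}_hc^0$ and $\bfxi_{\vec{v}}^0$ is $O(h^q)$, a discrete Gronwall inequality then closes the estimate, provided $\tau$ is small enough that the coefficient of the current-step unknowns stays positive after absorption. This simultaneously bounds $\max_n\big(\norm{\xi_c^n}{\mathrm{DG}}^2+\norm{\bfxi_{\vec{v}}^n}{L^2(\Omega)}^2\big)+\tau\sum\norm{\bfxi_{\vec{v}}^n}{\mathrm{DG}}^2$ and, by retaining the $K_\alpha\norm{\xi_\mu^n}{\mathrm{DG}}^2$ term rather than absorbing it, yields the separate estimate $\tau\sum\norm{\xi_\mu^n}{\mathrm{DG}}^2\le C(\tau^2+h^{2q})$.
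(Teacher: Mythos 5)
Your choice of test functions ($\chi=\xi_\mu^n$, $\varphi=\delta_\tau\xi_c^n$, $\vec{\theta}=\bfxi_{\vec{v}}^n$, $\phi=-\xi_p^n$) mimics the stability proof, and your left-hand side, your treatment of the convection difference, of the advection/interface coupling via \cref{rem:CHNS:relation_aA_bI}, and of the linear residuals all parallel the paper. But this is not the paper's route at the decisive point: the paper tests \eqref{eq:CHNS:error_equation1} not with $\xi_\mu^n$ but with $\mathcal{J}(\delta_\tau\xi_c^n)$, where $\mathcal{J}$ is the discrete inverse elliptic operator of \eqref{eq:CHNS:define_operator_J}. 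This places $K_\alpha\norm{\mathcal{J}(\delta_\tau\xi_c^n)}{\mathrm{DG}}^2$ on the left and, via the duality bound $\abs{(\lambda,\phi)}\leq C_1\norm{\phi}{\mathrm{DG}}\norm{\mathcal{J}(\lambda)}{\mathrm{DG}}$ of \cref{lem:CHNS:error_property_of_J}, lets every term paired with $\delta_\tau\xi_c^n$ — in particular the nonlinear chemical terms — be estimated by its DG norm times $\norm{\mathcal{J}(\delta_\tau\xi_c^n)}{\mathrm{DG}}$ and absorbed by Young's inequality; this is exactly why \cref{as:CHNS:assumptionA} is formulated as a Lipschitz bound in the DG norm. (The $\xi_\mu$ estimate is then recovered afterwards by a separate choice $\chi=\xi_\mu^n$.) You never invoke $\mathcal{J}$, and your substitute for it is where the argument breaks.

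The summation by parts $\sum_{n=1}^m(G^n,\xi_c^n-\xi_c^{n-1})=(G^m,\xi_c^m)-\sum_{n=1}^{m-1}(G^{n+1}-G^n,\xi_c^n)$ fails on both resulting pieces. First, the increments $G^{n+1}-G^n$ contain $\Phi_+'(c_h^{n+1})-\Phi_+'(c_h^n)$ and $\Phi_-'(c_h^{n})-\Phi_-'(c_h^{n-1})$, whose size is governed by $\norm{c_h^{n+1}-c_h^n}{L^2(\Omega)}=\tau\norm{\delta_\tau c_h^{n+1}}{L^2(\Omega)}$; writing $\delta_\tau c_h^{n+1}=\delta_\tau\mathcal{P}_hc^{n+1}-\delta_\tau\xi_c^{n+1}$ shows this reintroduces precisely the uncontrolled quantity $\delta_\tau\xi_c^{n+1}$. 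The regularity \eqref{eq:CHNS:error_regularities} concerns the exact solution only, and neither \cref{thm:CHNS:stability_bound1} nor your left-hand side bounds $\delta_\tau\xi_c^n$ in any norm, so the claim that these increments become ``$O(\tau)$ consistency contributions through the regularity'' is circular. Second, the boundary term $(G^m,\xi_c^m)$ is quadratic in the discretization error at the last two time levels with a coefficient of order $C_\mathrm{lip}$ (or $\sup\abs{\Phi_-''}$), not of order $\tau$: for instance $(\Phi_-'(c_h^{m-1})-\Phi_-'(c^m),\xi_c^m)$ produces $C\norm{\xi_c^{m-1}}{L^2(\Omega)}\norm{\xi_c^m}{L^2(\Omega)}$. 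Absorbing this into the telescoped $\tfrac{\kappa K_\alpha}{2}\norm{\xi_c^m}{\mathrm{DG}}^2$ would require a smallness condition of the type $C_\mathrm{lip}C_P^2<\kappa K_\alpha$, which is neither assumed nor obtainable by shrinking $\tau$, and discrete Gronwall cannot digest it because it carries no factor of $\tau$ (monotonicity of $\Phi_+'$ would salvage the convex part of the boundary term, but not the concave part, which is evaluated at two different time levels). A further, smaller issue: after summation by parts, the term $(\zeta_\mu^n,\delta_\tau\xi_c^n)$ requires increments $\zeta_\mu^{n+1}-\zeta_\mu^n$, i.e.\ time regularity of $\mu$, which \eqref{eq:CHNS:error_regularities} does not provide, whereas the paper bounds this term directly through \cref{lem:CHNS:error_property_of_J}. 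The repair is the paper's device: keep your energy skeleton, but gain control of $\norm{\mathcal{J}(\delta_\tau\xi_c^n)}{\mathrm{DG}}$ on the left and dualize all $\delta_\tau\xi_c^n$ pairings instead of summing by parts.
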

\begin{proof}
From \cref{thm:CHNS:discrete_mass_conservation} and \eqref{eq:CHNS:elliptic_proj_c}, 
it is obvious that $\delta_\tau \xi_c^n$ belongs to $M_h$, which means that the function $\mathcal{J}(\delta_\tau \xi_c^n)$ 
is well defined in $M_h$. Choosing $\chi = \mathcal{J}(\delta_\tau \xi_c^n)$ in \eqref{eq:CHNS:error_equation1} and using \cref{lem:CHNS:error_property_of_J}, we have
\begin{subequations}
\begin{multline}\label{eq:CHNS:error_analysis1}
\begin{aligned}
a_{\mathcal{D}}\big(\mathcal{J}(\delta_\tau\xi_c^n),\mathcal{J}(\delta_\tau \xi_c^n)\big) + (\delta_\tau \xi_c^n, \xi_\mu^n)
= \big(\delta_\tau c^n -(\partial_t c)^n - \delta_\tau\zeta_c^n,\mathcal{J}(\delta_\tau \xi_c^n)\big)\\ 
- a_{\mathcal{A}}\big(c^n,\vec{v}^n,\mathcal{J}(\delta_\tau \xi_c^n)\big) 
+ a_{\mathcal{A}}\big(c_h^{n-1},\vec{v}_h^n,\mathcal{J}(\delta_\tau \xi_c^n)\big).
\end{aligned}
\end{multline}
Choosing $\varphi = \delta_\tau \xi_c^n$ in \eqref{eq:CHNS:error_equation2} and adding and subtracting the appropriate terms, we obtain
\begin{multline}\label{eq:CHNS:error_analysis2}
\begin{aligned}
\kappa a_{\mathcal{D}}(\xi_c^n,\delta_\tau \xi_c^n) - (\xi_\mu^n,\delta_\tau \xi_c^n) 
= (\zeta_\mu^n,\delta_\tau \xi_c^n) + \big(\Phi_+\,\!'(c_h^n)-\Phi_+\,\!'(c^n),\delta_\tau \xi_c^n\big)\\ 
+ \big(\Phi_-\,\!'(c_h^{n-1})-\Phi_-\,\!'(c^{n-1}),\delta_\tau \xi_c^n\big) 
+ \big(\Phi_-\,\!'(c^{n-1})-\Phi_-\,\!'(c^{n}),\delta_\tau \xi_c^n\big).
\end{aligned}
\end{multline}
Choosing $\vec{\theta} = \bfxi_{\vec{v}}^n$ in \eqref{eq:CHNS:error_equation3}, $\phi = -\xi_p^n$ in \eqref{eq:CHNS:error_equation4} and combining the resulting equations, we have
\begin{multline}\label{eq:CHNS:error_analysis3}
\begin{aligned}
(\delta_\tau\bfxi_{\vec{v}}^n,\bfxi_{\vec{v}}^n) 
+ \mu_\mathrm{s} a_\strain(\bfxi_{\vec{v}}^n, \bfxi_{\vec{v}}^n) 
= \big(\delta_\tau\vec{v}^n-(\partial_t \vec{v})^n-\delta_\tau\bfzeta_{\vec{v}}^n,\bfxi_{\vec{v}}^n\big) \\
- \mu_\mathrm{s} a_\strain(\bfzeta_{\vec{v}}^n, \bfxi_{\vec{v}}^n) 
- b_{\mathcal{P}}(\zeta_p^n,\bfxi_{\vec{v}}^n) 
- a_{\mathcal{C}}(\vec{v}^n,\vec{v}^n,\vec{v}^n,\bfxi_{\vec{v}}^n) \\+ a_{\mathcal{C}}(\vec{v}_h^{n-1},\vec{v}_h^{n-1},\vec{v}_h^n,\bfxi_{\vec{v}}^n) 
+ b_\mathcal{I}(c^n,\mu^n,\bfxi_{\vec{v}}^n) - b_\mathcal{I}(c_h^{n-1},\mu_h^{n},\bfxi_{\vec{v}}^n).
\end{aligned}
\end{multline}
\end{subequations}
Summing \eqref{eq:CHNS:error_analysis1} -- \eqref{eq:CHNS:error_analysis3}, we obtain the following equation
\begin{eqnarray}\label{eq:error:error_equation_major}
\begin{aligned}
a_{\mathcal{D}}\big(\mathcal{J}(\delta_\tau\xi_c^n),\mathcal{J}(\delta_\tau \xi_c^n)\big) + \kappa a_{\mathcal{D}}(\xi_c^n,\delta_\tau \xi_c^n) + \mu_\mathrm{s} a_\strain(\bfxi_{\vec{v}}^n, \bfxi_{\vec{v}}^n) + (\delta_\tau\bfxi_{\vec{v}}^n,\bfxi_{\vec{v}}^n)\\
= \big(\delta_\tau c^n -(\partial_t c)^n,\mathcal{J}(\delta_\tau \xi_c^n)\big) - \big(\delta_\tau\zeta_c^n,\mathcal{J}(\delta_\tau \xi_c^n)\big) 
+ \big(\delta_\tau\vec{v}^n-(\partial_t \vec{v})^n,\bfxi_{\vec{v}}^n\big) \\
- (\delta_\tau\bfzeta_{\vec{v}}^n,\bfxi_{\vec{v}}^n) + \big(\Phi_+\,\!'(c_h^n)-\Phi_+\,\!'(c^n),\delta_\tau \xi_c^n\big) 
+ \big(\Phi_-\,\!'(c_h^{n-1})-\Phi_-\,\!'(c^{n-1}),\delta_\tau \xi_c^n\big) \\
+ \big(\Phi_-\,\!'(c^{n-1})-\Phi_-\,\!'(c^{n}),\delta_\tau \xi_c^n\big) + (\zeta_\mu^n,\delta_\tau \xi_c^n) - \mu_\mathrm{s} a_\strain(\bfzeta_{\vec{v}}^n, \bfxi_{\vec{v}}^n) - b_{\mathcal{P}}(\zeta_p^n,\bfxi_{\vec{v}}^n) \\
- a_{\mathcal{C}}(\vec{v}^n,\vec{v}^n,\vec{v}^n,\bfxi_{\vec{v}}^n) 
+ a_{\mathcal{C}}(\vec{v}_h^{n-1},\vec{v}_h^{n-1},\vec{v}_h^n,\bfxi_{\vec{v}}^n) 
- a_{\mathcal{A}}\big(c^n,\vec{v}^n,\mathcal{J}(\delta_\tau \xi_c^n)\big) \\
+ a_{\mathcal{A}}\big(c_h^{n-1},\vec{v}_h^n,\mathcal{J}(\delta_\tau \xi_c^n)\big) 
+ b_\mathcal{I}(c^n,\mu^n,\bfxi_{\vec{v}}^n) 
- b_\mathcal{I}(c_h^{n-1},\mu_h^{n},\bfxi_{\vec{v}}^n)
= T_1 + \dots + T_{16}.
\end{aligned}
\end{eqnarray}
The remainder of the proof consists of finding lower bounds for the terms in the left-hand side and upper bounds for the terms in the right-hand side of the equation above. We will then utilize Gronwall's lemma.
For the left-hand side of \eqref{eq:error:error_equation_major}, since $a_{\mathcal{D}}$ and the inner product are both symmetric bilinear forms, using the formula $a(a-b) \geq \frac{1}{2}a^2 - \frac{1}{2}b^2$, and the coercivity of $a_{\mathcal{D}}$ and $a_\strain$, we have
\begin{align}\label{eq:CHNS:bound_error_lefthand}
\begin{split}
&a_{\mathcal{D}}\big(\mathcal{J}(\delta_\tau\xi_c^n),\mathcal{J}(\delta_\tau \xi_c^n)\big) 
+ \kappa a_{\mathcal{D}}(\xi_c^n,\delta_\tau \xi_c^n) 
+ \mu_\mathrm{s} a_\strain(\bfxi_{\vec{v}}^n, \bfxi_{\vec{v}}^n) 
+ (\delta_\tau\bfxi_{\vec{v}}^n,\bfxi_{\vec{v}}^n)\\
\geq& K_\alpha\norm{\mathcal{J}(\delta_\tau\xi_c^n)}{\mathrm{DG}}^2 
+ \frac{\kappa}{2\tau}a_{\mathcal{D}}(\xi_c^n ,\xi_c^n) 
- \frac{\kappa}{2\tau}a_{\mathcal{D}}(\xi_c^{n-1},\xi_c^{n-1})\\
&+ \mu_\mathrm{s}K_\strain\norm{\bfxi_{\vec{v}}^n}{\mathrm{DG}}^2 +
\frac{1}{2\tau}\norm{\bfxi_{\vec{v}}^n}{L^2(\Omega)}^2 - \frac{1}{2\tau}\norm{\bfxi_{\vec{v}}^{n-1}}{L^2(\Omega)}^2.
\end{split}
\end{align}
Now, let us proceed to estimate the right-hand side of \eqref{eq:error:error_equation_major} term by term. 
At several places, we will use Young's inequality with positive real numbers $r_1$, $r_2$, and $r_3$ to be chosen later.
By Cauchy--Schwarz's inequality, Poincar\'e's inequality, Young's inequality, and using a Taylor expansion, we have 
\begin{align*}
T_1 &\leq \norm{\delta_\tau c^n - (\partial_t c)^n}{L^2(\Omega)}\norm{\mathcal{J}(\delta_\tau \xi_c^n)}{L^2(\Omega)} \\
&\leq C_P \norm{\delta_\tau c^n - (\partial_t c)^n}{L^2(\Omega)}\norm{\mathcal{J}(\delta_\tau \xi_c^n)}{\mathrm{DG}}\\
&\leq \frac{r_1}{2}\norm{\mathcal{J}(\delta_\tau \xi_c^n)}{\mathrm{DG}}^2 
+ \frac{C_P^2}{2r_1} \norm{\delta_\tau c^n - (\partial_t c)^n}{L^2(\Omega)}^2 \\
&\leq \frac{r_1}{2} \norm{\mathcal{J}(\delta_\tau \xi_c^n)}{\mathrm{DG}}^2 
+ \frac{C_P^2\tau}{6r_1}\int_{t^{n-1}}^{t^n} \norm{\partial_{tt} c}{L^2(\Omega)}^2.
\end{align*}
By Cauchy--Schwarz's inequality, Poincar\'e's inequality, and Young's inequality, the term $T_2$ is simply bounded 
\begin{align*}
T_2 &\leq \norm{\delta_\tau \zeta_c^n}{L^2(\Omega)}\norm{\mathcal{J}(\delta_\tau \xi_c^n)}{L^2(\Omega)}\\
&\leq C_P\norm{\delta_\tau \zeta_c^n}{L^2(\Omega)}\norm{\mathcal{J}(\delta_\tau \xi_c^n)}{\mathrm{DG}}\\
&\leq \frac{r_1}{2}\norm{\mathcal{J}(\delta_\tau \xi_c^n)}{\mathrm{DG}}^2 
+ \frac{C_P^2}{2 r_1}\norm{\delta_\tau \zeta_c^n}{L^2(\Omega)}^2.
\end{align*}
Next, we remark that $\mathcal{P}_h (\delta_\tau c^n) = \delta_\tau (\mathcal{P}_h c^n)$ and 
with the approximation result of \cref{lem:CHNS:error_elliptic_projection}, we have
\[
\Vert \delta_\tau\zeta_c^n\Vert_{L^2(\Omega)} \leq C h^q \Vert \partial_t c\Vert_{L^\infty(0,T;H^{q+1}(\Omega))}.
\]
Therefore, we have
\[
T_2 \leq \frac{r_1}{2}\norm{\mathcal{J}(\delta_\tau \xi_c^n)}{\mathrm{DG}}^2 + \frac{C}{r_1} h^{2q}.
\]
The terms $T_3$ and $T_4$ are bounded by employing a similar technique as for $T_1$ and $T_2$. 
\begin{align*}
T_3 &\leq \norm{\bfxi_{\vec{v}}^n}{L^2(\Omega)}^2 + \frac{\tau}{12}\int_{t^{n-1}}^{t^n} \norm{\partial_{tt} \vec{v}}{L^2(\Omega)}^2,\\
T_4 &\leq \norm{\bfxi_{\vec{v}}^n}{L^2(\Omega)}^2 + \frac{1}{4}\norm{\delta_\tau \bfzeta_{\vec{v}}^n}{L^2(\Omega)}^2.
\end{align*}
We write
\[
\delta_\tau \bfzeta_{\vec{v}}^n = \frac{1}{\tau}\int_{t^{n-1}}^{t^n} \partial_t (\vec{v}-\mathcal{R}_h\vec{v}).
\]
This implies with \cref{lem:Rh}:
\begin{equation}\label{eq:bounddeltatau}
\Vert \delta_\tau \bfzeta_{\vec{v}}^n\Vert_{L^2(\Omega)} \leq \frac{C}{\sqrt{\tau}} h^{q} \Vert \partial_t \vec{v}\Vert_{L^2(t^{n-1},t^n;H^q(\Omega))}.
\end{equation}
Therefore we have for $T_4$:
\[
T_4 \leq \norm{\bfxi_{\vec{v}}^n}{L^2(\Omega)}^2 + \frac{C}{\tau} h^{2q} \Vert \partial_t \vec{v}\Vert_{L^2(t^{n-1},t^n;H^q(\Omega))}^2.
\]
For the term $T_5$, using \Cref{lem:CHNS:error_property_of_J}, Young's inequality, assumption \eqref{eq:CHNS:error_assumption_on_Phi}, and triangular inequality, we have
\begin{align*}
T_5 &\leq C_1 \norm{\Phi_+\,\!'(c_h^n)-\Phi_+\,\!'(c^n)}{\mathrm{DG}}\norm{\mathcal{J}(\delta_\tau \xi_c^n)}{\mathrm{DG}}\\
&\leq \frac{r_1}{2} \norm{\mathcal{J}(\delta_\tau \xi_c^n)}{\mathrm{DG}}^2 + \frac{C_1^2}{2 r_1} \norm{\Phi_+\,\!'(c_h^n)-\Phi_+\,\!'(c^n)}{\mathrm{DG}}^2\\
&\leq \frac{r_1}{2} \norm{\mathcal{J}(\delta_\tau \xi_c^n)}{\mathrm{DG}}^2 + \frac{C_1^2 C_\mathrm{lip}^2}{2 r_1} \norm{c_h^n-c^n}{\mathrm{DG}}^2\\
&\leq \frac{r_1}{2} \norm{\mathcal{J}(\delta_\tau \xi_c^n)}{\mathrm{DG}}^2 + \frac{C_1^2 C_\mathrm{lip}^2}{r_1} (\norm{\xi_c^n}{\mathrm{DG}}^2 
+ C h^{2q} \norm{c}{L^\infty(0,T;H^{q+1}(\Omega))}^2).
\end{align*}
Repeating exactly the same steps of bounding the term $T_5$ as above, we have the following bound for the term $T_6$
\begin{equation*}
T_6 \leq \frac{r_1}{2} \norm{\mathcal{J}(\delta_\tau \xi_c^n)}{\mathrm{DG}}^2 
+ \frac{C_1^2 C_\mathrm{lip}^2}{r_1} (\norm{\xi_c^{n-1}}{\mathrm{DG}}^2 + C h^{2q} \norm{c}{L^\infty(0,T;H^{q+1}(\Omega))}^2).
\end{equation*}
For the term $T_7$, we use \cref{lem:CHNS:error_property_of_J}, Young's inequality, and a Taylor expansion of first order and obtain
\begin{align*}
T_7 &\leq \frac{r_1}{2} \norm{\mathcal{J}(\delta_\tau \xi_c^n)}{\mathrm{DG}}^2
+ \frac{C_1^2}{2r_1} \norm{\Phi_-\,\!'(c^{n-1})-\Phi_-\,\!'(c^{n})}{\mathrm{DG}}^2\\
&\leq \frac{r_1}{2} \norm{\mathcal{J}(\delta_\tau \xi_c^n)}{\mathrm{DG}}^2
+ \frac{C\tau^2}{r_1} \norm{\partial_t c}{L^\infty(0,T;\,H^1(\Omega))}^2.
\end{align*}
For the term $T_8$, since $\zeta_\mu^n$ belongs to $H^1(\setE_h)$, with \Cref{lem:CHNS:error_property_of_J}, Young's inequality 
and the approximation bound in \cref{lem:CHNS:error_elliptic_projection},  we have
\begin{equation*}
T_8 \leq C_1 \norm{\zeta_\mu^n}{\mathrm{DG}} \norm{\mathcal{J}(\delta_\tau \xi_c^n)}{\mathrm{DG}}
\leq \frac{r_1}{2} \norm{\mathcal{J}(\delta_\tau \xi_c^n)}{\mathrm{DG}}^2 + \frac{C h^{2q}}{r_1} \norm{\mu}{L^\infty(0,T;H^{q+1}(\Omega))}^2.
\end{equation*}
The way of processing the terms $T_9$ and $T_{10}$ follows the argument in \cite{riviere2008} (page 127), we have
\begin{align*}
T_9 &\leq r_2 \mu_s K_\strain \norm{\bfxi_{\vec{v}}^n}{\mathrm{DG}}^2 + \frac{C}{r_2}h^{2q}\norm{\vec{v}^n}{H^{q+1}(\Omega)}^2,\\
T_{10} &\leq r_2 \mu_s K_\strain \norm{\bfxi_{\vec{v}}^n}{\mathrm{DG}}^2 + \frac{C}{r_2} h^{2q}\norm{p^n}{H^{q}(\Omega)}^2. 
\end{align*}
The handling of the terms $T_{11}$ and $T_{12}$ is complicated; however these terms have been analyzed in papers for the Navier--Stokes equations,
for instance in \cite{ChaabaneGiraultPuelzRiviere2017}.  We give an outline of the proof for completeness.
\begin{align}\label{eq:CHNS:error_convection}
\begin{split}
T_{11} + T_{12} 
=& - a_{\mathcal{C}}(\vec{v}_h^{n-1},\vec{v}^n,\vec{v}^n,\bfxi_{\vec{v}}^n) 
+ a_{\mathcal{C}}(\vec{v}_h^{n-1},\vec{v}_h^{n-1},\vec{v}_h^n,\bfxi_{\vec{v}}^n)\\
=& -a_{\mathcal{C}}(\vec{v}_h^{n-1},\vec{v}_h^{n-1},\bfxi_{\vec{v}}^n,\bfxi_{\vec{v}}^n) 
- a_{\mathcal{C}}(\vec{v}_h^{n-1},\bfxi_{\vec{v}}^{n-1},\mathcal{R}_h\vec{v}^n,\bfxi_{\vec{v}}^n)\\ 
&- a_{\mathcal{C}}\big(\vec{v}_h^{n-1},\bfzeta_{\vec{v}}^{n-1},\mathcal{R}_h\vec{v}^n,\bfxi_{\vec{v}}^n\big) 
+ a_{\mathcal{C}}\big(\vec{v}_h^{n-1},\vec{v}^{n-1}-\vec{v}^n,\mathcal{R}_h\vec{v}^n,\bfxi_{\vec{v}}^n\big)\\
&- a_{\mathcal{C}}\big(\vec{v}_h^{n-1},\vec{v}^n,\bfzeta_{\vec{v}}^n,\bfxi_{\vec{v}}^n\big) 
\\
=& T_{\mathcal{C}}^1 + \dots + T_{\mathcal{C}}^5.
\end{split}
\end{align}
We know from \cref{lem:CHNS:positivity_convection} that the first term $T_{\mathcal{C}}^1$ is negative. 
We rewrite the second term as:
\[
T_{\mathcal{C}}^2 = -a_{\mathcal{C}}(\vec{v}_h^{n-1},\bfxi_{\vec{v}}^{n-1},\vec{v}^n, \bfxi_{\vec{v}}^n) 
+ a_{\mathcal{C}}(\vec{v}_h^{n-1},\bfxi_{\vec{v}}^{n-1},\bfzeta_{\vec{v}}^n, \bfxi_{\vec{v}}^n).
\]
Note that $\bfxi_{\vec{v}}^{n-1}$ belongs to $V_h$ and we apply \cref{lem:CHNS:boundconvection} to the first term
\[
\abs{a_{\mathcal{C}}(\vec{v}_h^{n-1},\bfxi_{\vec{v}}^{n-1},\vec{v}^n, \bfxi_{\vec{v}}^n)}
\leq C \left( \Vert \vec{v}^n\Vert_{L^\infty(\Omega)} 
+ \vert \vec{v}^n\vert_{W^{1,3}(\Omega)}\right) \Vert \bfxi_{\vec{v}}^{n-1}\Vert_{L^2(\Omega)}
\Vert \bfxi_{\vec{v}}^n \Vert_{\mathrm{DG}}.
\]
We apply \cref{lem:CHNS:boundconvection2} to the second term
\begin{multline*}
\abs{a_{\mathcal{C}}(\vec{v}_h^{n-1},\bfxi_{\vec{v}}^{n-1},\bfzeta_{\vec{v}}^n, \bfxi_{\vec{v}}^n)}
\leq C \left( \Vert \bfzeta_{\vec{v}}^n\Vert_{L^\infty(\Omega)} 
+ \vert \bfzeta_{\vec{v}}^n\vert_{W^{1,3}(\Omega)}+\vert \vec{v}^n\vert_{H^{3/2}(\Omega)}\right) \\
\times \Vert \bfxi_{\vec{v}}^{n-1}\Vert_{L^2(\Omega)}
\Vert \bfxi_{\vec{v}}^n \Vert_{\mathrm{DG}}.
\end{multline*}
Combining both terms, we obtain
\[
T_{\mathcal{C}}^2 \leq r_2 \mu_s K_\strain \Vert \bfxi_{\vec{v}}^n\Vert_{\mathrm{DG}}^2
+ \frac{C}{r_2} \Vert \bfxi_{\vec{v}}^{n-1}\Vert_{L^2(\Omega)}^2.
\]
We apply \cref{lem:CHNS:continuityaC} to the term $T_\mathcal{C}^3$ and obtain using the regularity of the weak solution:
\[
T_\mathcal{C}^3 \leq r_2 \mu_s K_\strain \Vert \bfxi_{\vec{v}}^n\Vert_{\mathrm{DG}}^2
+ \frac{C}{r_2} h^{2q} \Vert \vec{v}\Vert_{L^\infty(0,T;H^{q+1}(\Omega))}^2.
\]
For the term $T_\mathcal{C}^4$, we note that $\vec{v}^n$ is divergence free and has no jumps. The term simplifies
\begin{multline*}
T_\mathcal{C}^4 = \sum_{E\in\setE_h} \int_E \left((\vec{v}^{n-1}-\vec{v}^n)\cdot\nabla \mathcal{R}_h\vec{v}^n\right) \cdot \bfxi_{\vec{v}}^n
\\
+ \sum_{E\in\setE_h} \int_{\partial E_-\setminus \partial\Omega} \vert (\vec{v}^{n-1}-\vec{v}^n)\cdot\vec{n}_E\vert
\left((\mathcal{R}_h\vec{v}^n)^{\mathrm{int}}-(\mathcal{R}_h\vec{v}^n)^{\mathrm{ext}}\right)\cdot(\bfxi_{\vec{v}}^n)^{\mathrm{int}}.
\end{multline*}
We then have
\[
|\int_E \left((\vec{v}^{n-1}-\vec{v}^n)\cdot\nabla \mathcal{R}_h\vec{v}^n\right) \cdot \bfxi_{\vec{v}}^n \vert
\leq \int_{t^{n-1}}^{t^n} \Vert \partial_t \vec{v}\Vert_{L^2(E)} \Vert \bfxi_{\vec{v}}^n \Vert_{L^6(E)} \vert \mathcal{R}_h\vec{v}^n\vert_{W^{1,3}(E)}.
\]
Using the stability of $\mathcal{R}_h$ and summing over the elements yields:
\begin{multline*}
\left| \sum_{E\in\setE_h} \int_E \left((\vec{v}^{n-1}-\vec{v}^n)\cdot\nabla \mathcal{R}_h\vec{v}^n\right) \cdot \bfxi_{\vec{v}}^n\right|
\\
\leq C \sqrt{\tau} \Vert \partial_t \vec{v}\Vert_{L^2(t^{n-1},t^n;L^2(\Omega))} \Vert \bfxi_{\vec{v}}^n\Vert_{\mathrm{DG}}
\Vert \vec{v}\Vert_{L^\infty(0,T;W^{1,3}(\Omega))}.
\end{multline*}
For the term on the faces, we rewrite
\[
\vert (\mathcal{R}_h\vec{v}^n)^{\mathrm{int}}-(\mathcal{R}_h\vec{v}^n)^{\mathrm{ext}}\vert  = 
\vert [\mathcal{R}_h\vec{v}^n - \vec{v}^n]\vert,
\]
and employ trace inequalities to obtain a similar bound:
\begin{multline*}
\left|\sum_{E\in\setE_h} \int_{\partial E_-\setminus \partial\Omega} \vert (\vec{v}^{n-1}-\vec{v}^n)\cdot\vec{n}_E\vert
\left((\mathcal{R}_h\vec{v}^n)^{\mathrm{int}}-(\mathcal{R}_h\vec{v}^n)^{\mathrm{ext}}\right)\cdot(\bfxi_{\vec{v}}^n)^{\mathrm{int}}\right|
\\
\leq C \sqrt{\tau} \Vert \partial_t \vec{v}\Vert_{L^2(t^{n-1},t^n;L^\infty(\Omega))} \Vert \bfxi_{\vec{v}}^n\Vert_{\mathrm{DG}}
\Vert \vec{v}\Vert_{L^\infty(0,T;H^1(\Omega))}.
\end{multline*}
Therefore we finally obtain for the term $T_\mathcal{C}^4$:
\[
T_\mathcal{C}^4 \leq r_2 \mu_s K_\strain \Vert \bfxi_{\vec{v}}^n \Vert_{\mathrm{DG}}^2 
+ \frac{C}{r_2} \tau \Vert \partial_t \vec{v}\Vert_{L^2(t^{n-1},t^n;L^\infty(\Omega))}^2.
\]
The bound for $T_\mathcal{C}^5$ is similar but simpler:
\[
T_\mathcal{C}^5 \leq r_2 \mu_s K_\strain \Vert \bfxi_{\vec{v}}^n \Vert_{\mathrm{DG}}^2 
+ \frac{C}{r_2} h^{2q} \Vert \vec{v}\Vert_{L^2(t^{n-1},t^n;H^{q+1}(\Omega))}^2.
\]
Combining the bounds above, we obtain:
\begin{equation*}
T_{11}+T_{12} \leq  4 r_2 \mu_s K_\strain \Vert \bfxi_{\vec{v}}^n \Vert_{\mathrm{DG}}^2 
+ \frac{C}{r_2} \left( \Vert \bfxi_{\vec{v}}^{n-1}\Vert_{L^2(\Omega)}^2
+ h^{2q} + \tau \Vert \partial_t \vec{v}\Vert_{L^2(t^{n-1},t^n;L^\infty(\Omega))}^2
\right).
\end{equation*}
We can rewrite the terms $T_{13}+T_{14}$ as follows
\begin{align*}
T_{13}+T_{14} =& 
- a_{\mathcal{A}}\big(\zeta_c^{n-1},\vec{v}^n,\mathcal{J}(\delta_\tau \xi_c^n)\big)
- a_{\mathcal{A}}\big(\xi_c^{n-1},\vec{v}^n,\mathcal{J}(\delta_\tau \xi_c^n)\big)\\
&- a_{\mathcal{A}}\big(c^n-c^{n-1},\vec{v}^n,\mathcal{J}(\delta_\tau \xi_c^n)\big)
- a_{\mathcal{A}}\big(c_h^{n-1},\bfzeta_{\vec{v}}^n,\mathcal{J}(\delta_\tau \xi_c^n)\big)\\
&- a_{\mathcal{A}}\big(c_h^{n-1},\bfxi_{\vec{v}}^n,\mathcal{J}(\delta_\tau \xi_c^n)\big)
= T_{\mathcal{A}}^1 + \dots + T_\mathcal{A}^5.
\end{align*}
Expanding $T_\mathcal{A}^1$ by definition in \eqref{eq:CHNS:DG_advection}, by Cauchy--Schwarz's inequality, trace inequality, Poincar\'e's inequality, and \cref{lem:CHNS:error_elliptic_projection}, we have
\begin{align*}
\abs{T_\mathcal{A}^1} 
\leq C h^q\norm{c}{L^\infty(0,T;\,H^{q+1}(\Omega))}\norm{\vec{v}^n}{L^\infty(\Omega)}\norm{\mathcal{J}(\delta_\tau \xi_c^n)}{\mathrm{DG}}.
\end{align*}
Using a similar technique as above, we obtain
\begin{align*}
\abs{T_\mathcal{A}^2} &\leq C\norm{\xi_c^{n-1}}{L^2(\Omega)}\norm{\vec{v}^n}{L^\infty(\Omega)}\norm{\mathcal{J}(\delta_\tau \xi_c^n)}{\mathrm{DG}}\\
&\leq C \norm{\xi_c^{n-1}}{\mathrm{DG}} \norm{\vec{v}^n}{L^\infty(\Omega)}\norm{\mathcal{J}(\delta_\tau \xi_c^n)}{\mathrm{DG}}.
\end{align*}
Taking a Taylor expansion of $c$ at $t^{n-1}$ and using similar techniques as above, we have
\begin{equation*}
\abs{T_\mathcal{A}^3} \leq C\tau\norm{\partial_t c}{L^\infty(0,T;L^2(\Omega))}\norm{\vec{v}^n}{L^\infty(\Omega)}\norm{\mathcal{J}(\delta_\tau \xi_c^n)}{\mathrm{DG}}.
\end{equation*}
Again, using similar techniques as above, by the approximation in \cref{lem:Rh}, the mass conservation \cref{thm:CHNS:discrete_mass_conservation}, and the stability bound \cref{eq:CHNS:stability_result1}, we have
\begin{equation*}
\abs{T_\mathcal{A}^4} 
\leq C h^q\norm{\vec{v}^n}{H^{q+1}(\Omega)} \norm{c_h^{n-1}}{L^6(\Omega)}\norm{\mathcal{J}(\delta_\tau \xi_c^n)}{\mathrm{DG}} 
\leq C h^q \norm{\mathcal{J}(\delta_\tau \xi_c^n)}{\mathrm{DG}}.
\end{equation*}
Using the boundedness of $a_{\mathcal{A}}$, the mass conservation \cref{thm:CHNS:discrete_mass_conservation}, the stability bound \cref{eq:CHNS:stability_result1}, and Young's inequality, we have
\begin{align*}
\abs{T_\mathcal{A}^5} 
& \leq C_\gamma\Big( \norm{c_h^{n-1}}{\mathrm{DG}} + \abs{\Omega}\abs{\bar{c}_0}\Big)\norm{\bfxi_{\vec{v}}^n}{L^2(\Omega)}^{1/2}\norm{\bfxi_{\vec{v}}^n}{\mathrm{DG}}^{1/2}\norm{\mathcal{J}(\delta_\tau \xi_c^n)}{\mathrm{DG}}\\
& \leq \frac{C}{r_1^2\,r_2} \norm{\bfxi_{\vec{v}}^n}{L^2(\Omega)}^2 + r_2\mu_s K_\strain \norm{\bfxi_{\vec{v}}^n}{\mathrm{DG}}^2 + \frac{r_1}{5} \norm{\mathcal{J}(\delta_\tau \xi_c^n)}{\mathrm{DG}}^2.
\end{align*}
Therefore, combining the bounds above and using Young's inequality, we obtain
\begin{align*}
T_{13}+T_{14} \leq
&\, \frac{C}{r_1^2\,r_2} \norm{\bfxi_{\vec{v}}^n}{L^2(\Omega)}^2 + r_2\mu_s K_\strain \norm{\bfxi_{\vec{v}}^n}{\mathrm{DG}}^2 \\
&+ r_1\norm{\mathcal{J}(\delta_\tau \xi_c^n)}{\mathrm{DG}}^2 + \frac{C}{r_1}\norm{\xi_c^{n-1}}{\mathrm{DG}}^2  + \frac{C}{r_1}(\tau^2 + h^{2q}).
\end{align*}
For the terms $T_{15}$ and $T_{16}$, by \cref{rem:CHNS:relation_aA_bI}, we may write
\begin{align*}
T_{15} + T_{16} =& 
a_\mathcal{A}(c^n,\bfxi_{\vec{v}}^n,\mu^n) 
- a_\mathcal{A}(c_h^{n-1},\bfxi_{\vec{v}}^n,\mu_h^{n})\\
= & a_\mathcal{A}(\zeta_c^{n-1},\bfxi_{\vec{v}}^n,\mu^n)
+ a_\mathcal{A}(\xi_c^{n-1},\bfxi_{\vec{v}}^n,\mu^n)\\
 &+ a_\mathcal{A}(c^n-c^{n-1},\bfxi_{\vec{v}}^n,\mu^n)
+ a_\mathcal{A}(c_h^{n-1},\bfxi_{\vec{v}}^n,\zeta_\mu^n)
+ a_\mathcal{A}(c_h^{n-1},\bfxi_{\vec{v}}^n,\xi_\mu^n).
\end{align*}
The first three terms simplify since $\mu^n$ does not have any jump. Using Poincar\'{e}'s inequality, we obtain:
\[
|a_\mathcal{A}(\zeta_c^{n-1},\bfxi_{\vec{v}}^n,\mu^n)| \leq \Vert \zeta_c^{n-1}\Vert_{L^2(\Omega)} \Vert \bfxi_{\vec{v}}^n\Vert_{L^4(\Omega)} \Vert \mu^n \Vert_{W^{1,4}(\Omega)}
\leq C h^q \Vert c^{n-1}\Vert_{H^{q+1}(\Omega)} \Vert \bfxi_{\vec{v}}^n\Vert_{\mathrm{DG}},
\]
\[
|a_\mathcal{A}(\xi_c^{n-1},\bfxi_{\vec{v}}^n,\mu^n) | \leq \Vert \xi_c^{n-1}\Vert_{L^2(\Omega)} \Vert \bfxi_{\vec{v}}^n\Vert_{L^4(\Omega)} \Vert \mu^n \Vert_{W^{1,4}(\Omega)}
\leq C \norm{\xi_c^{n-1}}{\mathrm{DG}} \norm{\bfxi_{\vec{v}}^n}{\mathrm{DG}},
\]
and with Taylor expansion
\[
|a_\mathcal{A}(c^n-c^{n-1},\bfxi_{\vec{v}}^n,\mu^n)| \leq \Vert c^n-c^{n-1}\Vert_{L^2(\Omega)} \Vert \bfxi_{\vec{v}}^n\Vert_{L^4(\Omega)}
\Vert \mu^n \Vert_{W^{1,4}(\Omega)}
\leq C \tau \Vert \bfxi_{\vec{v}}^n\Vert_{\mathrm{DG}}.
\]
For the other two terms, by H\"older's inequality, trace inequality, Poincar\'e's inequality, the approximation bound in \cref{lem:CHNS:error_elliptic_projection}, the mass conservation \cref{thm:CHNS:discrete_mass_conservation}, the stability bound \cref{eq:CHNS:stability_result1}, and \cref{lem:CHNS:boundedness_aA}, we have:
\begin{align*}
\abs{a_\mathcal{A}(c_h^{n-1},\bfxi_{\vec{v}}^n,\zeta_\mu^n)} 
&\leq C h^q \norm{\mu}{L^\infty(0,T;H^{q+1}(\Omega))} \norm{c_h^{n-1}}{L^6(\Omega)} \norm{\bfxi_{\vec{v}}^n}{L^3(\Omega)}
\leq C h^q \norm{\bfxi_{\vec{v}}^n}{\mathrm{DG}},
\\
\abs{a_\mathcal{A}(c_h^{n-1},\bfxi_{\vec{v}}^n,\xi_\mu^n)}
&\leq C_\gamma\Big( \norm{c_h^{n-1}}{\mathrm{DG}} + \abs{\Omega}\abs{\bar{c}_0}\Big)\norm{\bfxi_{\vec{v}}^n}{L^2(\Omega)}^{1/2}\norm{\bfxi_{\vec{v}}^n}{\mathrm{DG}}^{1/2}\norm{\xi_\mu^n}{\mathrm{DG}}\\
&\leq \frac{C}{r_2\,r_3^2} \norm{\bfxi_{\vec{v}}^n}{L^2(\Omega)}^2 + \frac{r_2\mu_s K_\strain}{5}\norm{\bfxi_{\vec{v}}^n}{\mathrm{DG}}^2 + r_3 \norm{\xi_\mu^n}{\mathrm{DG}}^2.
\end{align*}
Therefore we obtain:
\begin{align*}
T_{15} + T_{16} \leq&\,
r_2 \mu_s K_\strain \Vert \bfxi_{\vec{v}}^n\Vert_{\mathrm{DG}}^2
+ \frac{C}{r_2} (\tau^2+h^{2q})\\
&+ \frac{C}{r_2} \norm{\xi_c^{n-1}}{\mathrm{DG}}^2
+ \frac{C}{r_2\,r_3^2} \norm{\bfxi_{\vec{v}}^n}{L^2(\Omega)}^2
+ r_3 \norm{\xi_\mu^n}{\mathrm{DG}}^2.
\end{align*}
It remains to find a bound for $\norm{\xi_\mu^n}{\mathrm{DG}}$. We note that $\delta_\tau c^n - (\partial_t c)^n - \delta_\tau\zeta_c^n$ belongs to $M_h$ by taking $\chi = 1$ in \eqref{eq:CHNS:error_equation1}. We choose $\chi = \xi_\mu^n$ in \eqref{eq:CHNS:error_equation1}, use coercivity of $a_\mathcal{D}$, \cref{lem:CHNS:error_property_of_J}, Cauchy--Schwarz's inequality, triangular inequality, and Poincar\'e's inequality to obtain:
\begin{align*}
K_\alpha \norm{\xi_\mu^n}{\mathrm{DG}}^2 
\leq&\, \abs{(\delta_\tau \xi_c^n,\xi_\mu^n)} + \abs{(\delta_\tau c^n - (\partial_t c)^n - \delta_\tau\zeta_c^n, \xi_\mu^n - \frac{1}{\abs{\Omega}}\int_\Omega \xi_\mu^n)} \\
&\,+ \abs{a_\mathcal{A}(c_h^{n-1},\vec{v}_h^n,\xi_\mu^n) - a_\mathcal{A}(c^n,\vec{v}^n,\xi_\mu^n)} \\
\leq&\, C\norm{J(\delta_\tau \xi_c^n)}{\mathrm{DG}}\norm{\xi_\mu^n}{\mathrm{DG}} + C\norm{\delta_\tau c^n - (\partial_t c)^n}{L^2(\Omega)}\norm{\xi_\mu^n}{\mathrm{DG}} \\
&\, + C\norm{\delta_\tau\zeta_c^n}{L^2(\Omega)}\norm{\xi_\mu^n}{\mathrm{DG}} + \abs{a_\mathcal{A}(c_h^{n-1},\vec{v}_h^n,\xi_\mu^n) - a_\mathcal{A}(c^n,\vec{v}^n,\xi_\mu^n)}.
\end{align*}
The last two terms are handled similarly than the terms $T_{13}$ and $T_{14}$. Using Young's inequality, we have
\begin{multline*}
\norm{\xi_\mu^n}{\mathrm{DG}}^2 
\leq C\Big( \norm{\bfxi_\vec{v}^n}{L^2(\Omega)}^2 + \mu_\mathrm{s}K_\strain\norm{\bfxi_\vec{v}^n}{\mathrm{DG}}^2
+ \norm{\xi_c^{n-1}}{\mathrm{DG}}^2 + \tau^2 + h^{2q}\\
+ \norm{J(\delta_\tau\xi_c^n)}{\mathrm{DG}}^2+ \norm{\delta_\tau\zeta_c^n}{L^2(\Omega)}^2
+ \tau\int_{t^{n-1}}^{t^n} \norm{\partial_{tt} c}{L^2(\Omega)}^2 \Big).
\end{multline*}
Next, we remark that $\mathcal{P}_h (\delta_\tau c^n) = \delta_\tau (\mathcal{P}_h c^n)$ and with the approximation result in \cref{lem:CHNS:error_elliptic_projection}, we have
\begin{equation*}
\norm{\delta_\tau\zeta_c^n}{L^2(\Omega)} \leq C h^q \norm{\partial_t c}{L^\infty(0,T;H^{q+1}(\Omega))}.
\end{equation*}
With this bound, the bound for $T_{15}$ and $T_{16}$ becomes:
\begin{align*}
T_{15} + T_{16} 
\leq&\, (r_2+Cr_3) \mu_s K_\strain \norm{\bfxi_{\vec{v}}^n}{\mathrm{DG}}^2 + Cr_3 \norm{J(\delta_\tau \xi_c^n)}{\mathrm{DG}}^2 + C(\frac{1}{r_2\,r_3^2} + r_3) \norm{\bfxi_{\vec{v}}^n}{L^2(\Omega)}^2 \\
&+ C(\frac{1}{r_2}+r_3) \norm{\xi_c^{n-1}}{\mathrm{DG}}^2 + C(\frac{1}{r_2}+r_3) (\tau^2+h^{2q}) + Cr_3 \tau \int_{t^{n-1}}^{t^n}\! \norm{\partial_{tt} c}{L^2(\Omega)}^2.
\end{align*}
To this end, combining \eqref{eq:CHNS:bound_error_lefthand} with all the bounds for $T_1$ to $T_{16}$, and choosing the values $r_1 = K_\alpha/9$, $r_2 = 1/18$, and $r_3 = \min{\{1,K_\alpha\}}/18C$ yields
\begin{align*}
&\frac{K_\alpha}{2}\norm{\mathcal{J}(\delta_\tau \xi_c^n)}{\mathrm{DG}}^2 
+ \frac{\kappa}{2\tau} a_{\mathcal{D}}(\xi_c^n, \xi_c^n) 
- \frac{\kappa}{2\tau} a_{\mathcal{D}}(\xi_c^{n-1}, \xi_c^{n-1}) \\ 
&+ \frac{\mu_\mathrm{s}K_\strain}{2}\norm{\bfxi_{\vec{v}}^n}{\mathrm{DG}}^2 
+ \frac{1}{2\tau}\norm{\bfxi_{\vec{v}}^n}{L^2(\Omega)}^2 
- \frac{1}{2\tau}\norm{\bfxi_{\vec{v}}^{n-1}}{L^2(\Omega)}^2\\
\leq&\, C\big(\norm{\xi_c^n}{\mathrm{DG}}^2 + \norm{\xi_c^{n-1}}{\mathrm{DG}}^2\big) 
+ C\big(\norm{\bfxi_{\vec{v}}^n}{L^2(\Omega)}^2 + \norm{\bfxi_{\vec{v}}^{n-1}}{L^2(\Omega)}^2\big)\\
& + C(\tau^2 + h^{2q})
+ C\frac{h^{2q}}{\tau}\norm{\partial_t \vec{v}}{L^2(t^{n-1},t^n;H^q(\Omega))}^2\\
& + C \tau \int_{t^{n-1}}^{t^n} \big(\norm{\partial_{tt} c}{L^2(\Omega)}^2 + \norm{\partial_t \vec{v}}{L^\infty(\Omega)}^2 + \norm{\partial_{tt} \vec{v}}{L^2(\Omega)}^2\big).
\end{align*}
Multiply by $2\tau$ and sum from $n=1$ to $n=m$, use the coercivity of $a_{\mathcal{D}}$, the fact 
that $\bfxi_{\vec{v}}^0$ is optimally bounded and $\xi_c^0 = 0$:
\begin{align*}
&\tau \sum_{n=1}^m K_\alpha\norm{\mathcal{J}(\delta_\tau \xi_c^n)}{\mathrm{DG}}^2 
+ \kappa K_\alpha \Vert \xi_c^m \Vert_{\mathrm{DG}}^2
+ \tau\sum_{n=1}^m \mu_\mathrm{s}K_\strain\norm{\bfxi_{\vec{v}}^n}{\mathrm{DG}}^2 
+ \norm{\bfxi_{\vec{v}}^m}{L^2(\Omega)}^2 \\
\leq&\, C\tau \sum_{n=1}^m \norm{\xi_c^n}{\mathrm{DG}}^2 
+ C\tau \sum_{n=1}^m \norm{\bfxi_{\vec{v}}^n}{L^2(\Omega)}^2 
+ C(\tau^2 + h^{2q})\\
&+ C \tau^2 \big(\norm{\partial_{tt} c}{L^2(0,T;L^2(\Omega))}^2 + \norm{\partial_t \vec{v}}{L^2(0,T;L^\infty(\Omega))}^2 + \norm{\partial_{tt} \vec{v}}{L^2(0,T;L^2(\Omega))}^2 \big).
\end{align*}
Then, for sufficiently small time step size $\tau$, we can conclude using discrete Gronwall inequality
\begin{align*}
&\tau \sum_{n=1}^m \norm{\mathcal{J}(\delta_\tau \xi_c^n)}{\mathrm{DG}}^2 
+ \Vert \xi_c^m \Vert_{\mathrm{DG}}^2 
+ \tau\sum_{n=1}^m \norm{\bfxi_{\vec{v}}^n}{\mathrm{DG}}^2 
+ \norm{\bfxi_{\vec{v}}^m}{L^2(\Omega)}^2 
\leq C(\tau^2+h^{2}).
\end{align*}
Furthermore it is easy to obtain the following error estimate result
\begin{align*}
\tau \sum_{n=1}^m \norm{\xi_\mu^n}{\mathrm{DG}}^2 \leq C (\tau^2 + h^{2q}).
\end{align*}
\end{proof}

\begin{corollary}
Suppose $(c,\mu,\vec{v},p)$ is a weak solution of \eqref{eq:CHNS:consistency} with regularity \eqref{eq:CHNS:error_regularities}. Then, under \cref{as:CHNS:assumptionA} and sufficiently small time step size $\tau$, there exists a constant $C$ independent of mesh size $h$ and time step size $\tau$ such that for any $m \geq 1$
\begin{multline*}
\max_{1\leq n\leq m}\Big(\norm{c(t^n)-c_h^n }{\mathrm{DG}}^2 
+ \norm{\vec{v}(t^n)-\vec{v}_h^n}{L^2(\Omega)}^2\Big) 
+ \tau\sum_{n=1}^{m}\norm{\vec{v}(t^n)-\vec{v}_h^n}{\mathrm{DG}}^2 \\
+\tau \sum_{n=1}^m \norm{\mu(t^n)-\mu_h^n}{\mathrm{DG}}^2 \leq C (\tau^2 + h^{2q}).
\end{multline*}
\end{corollary}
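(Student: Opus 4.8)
The plan is to derive the corollary from the error estimates of the preceding theorem by a triangle-inequality decomposition into projection errors and discretization errors. Recalling the definitions $\zeta_c^n = c^n - \mathcal{P}_h c^n$, $\xi_c^n = \mathcal{P}_h c^n - c_h^n$, and their analogues for $\mu$ and $\vec{v}$, I would first write
\begin{align*}
c(t^n) - c_h^n &= \zeta_c^n + \xi_c^n, &
\mu(t^n) - \mu_h^n &= \zeta_\mu^n + \xi_\mu^n, &
\vec{v}(t^n) - \vec{v}_h^n &= \bfzeta_{\vec{v}}^n + \bfxi_{\vec{v}}^n,
\end{align*}
and then apply the triangle inequality in each relevant norm, so that, for instance, $\norm{c(t^n) - c_h^n}{\mathrm{DG}}^2 \leq 2\norm{\zeta_c^n}{\mathrm{DG}}^2 + 2\norm{\xi_c^n}{\mathrm{DG}}^2$, with the corresponding splittings for the $L^2$ and DG norms of the velocity and for the DG norm of the chemical potential.

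The discretization-error contributions $\xi_c^n$, $\bfxi_{\vec{v}}^n$, and $\xi_\mu^n$ are controlled directly by the preceding theorem, which already supplies bounds of order $C(\tau^2 + h^{2q})$ for $\max_n\big(\norm{\xi_c^n}{\mathrm{DG}}^2 + \norm{\bfxi_{\vec{v}}^n}{L^2(\Omega)}^2\big)$, for $\tau\sum_n \norm{\bfxi_{\vec{v}}^n}{\mathrm{DG}}^2$, and for $\tau\sum_n \norm{\xi_\mu^n}{\mathrm{DG}}^2$. The projection-error contributions are handled by the approximation results stated earlier: \cref{lem:CHNS:error_elliptic_projection} gives $\norm{\zeta_c^n}{\mathrm{DG}} \leq Ch^q$ and $\norm{\zeta_\mu^n}{\mathrm{DG}} \leq Ch^q$ uniformly in $n$, while \cref{lem:Rh} (with $s = q+1$) gives $\norm{\bfzeta_{\vec{v}}^n}{\mathrm{DG}} \leq Ch^q\vert\vec{v}^n\vert_{H^{q+1}(\Omega)}$ and the sharper $L^2$ bound $\norm{\bfzeta_{\vec{v}}^n}{L^2(\Omega)} \leq Ch^{q+1}\vert\vec{v}^n\vert_{H^{q+1}(\Omega)}$. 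Invoking the regularity assumptions \eqref{eq:CHNS:error_regularities}, each projection error is therefore $O(h^q)$ (or better) uniformly in $n$, so that its squared contribution, including each time-summed term $\tau\sum_n\norm{\cdot}{}^2 \leq (m\tau)\max_n\norm{\cdot}{}^2 \leq T\max_n\norm{\cdot}{}^2$, is bounded by $Ch^{2q}$.

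Finally I would add the two families of bounds. Since $h^{2q} \leq C(\tau^2 + h^{2q})$, the projection and discretization contributions combine to yield the claimed estimate $C(\tau^2 + h^{2q})$ for the full error in each of the four norms appearing in the statement. This argument is entirely routine and introduces no new inequality or technique beyond those already established; the only point requiring minor care is the bookkeeping of the correct norm and regularity index for each projection error and the verification that the factor $m\tau$ remains uniformly bounded by $T$, so that the time-summed projection terms do not degrade the rate.
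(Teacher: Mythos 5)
Your proposal is correct and is exactly the argument the paper intends: the corollary is stated without proof precisely because it follows from the preceding theorem by the triangle inequality applied to the splittings $c(t^n)-c_h^n=\zeta_c^n+\xi_c^n$, $\mu(t^n)-\mu_h^n=\zeta_\mu^n+\xi_\mu^n$, $\vec{v}(t^n)-\vec{v}_h^n=\bfzeta_{\vec{v}}^n+\bfxi_{\vec{v}}^n$, combined with \cref{lem:CHNS:error_elliptic_projection} and \cref{lem:Rh} for the projection parts and the bound $m\tau\leq T$ for the time-summed terms. Your bookkeeping of norms, regularity indices, and the $O(h^q)$ (respectively $O(h^{q+1})$ in $L^2$) projection rates is accurate, so nothing is missing.
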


\section{Conclusions}
In this paper, we have formulated an interior penalty discontinuous Galerkin method for
solving the Cahn--Hilliard--Navier--Stokes equations. The time discretization utilizes a convex-concave
splitting of the chemical energy density and a Picard's linearization for the convection term. Existence and uniqueness of the numerical solution is proved for any general chemical energy density. We show that the discrete total free energy is always dissipative at any time and we obtain stability bounds with any generalized chemical energy density. Under the assumption of a global Lipschitz bound for the chemical energy density, we derive optimal error estimates in time and space. Our analysis of the unique solvability is also valid for non-symmetric versions of the discontinuous Galerkin formulation.

\section*{Acknowledgments}
The authors thank Prof. Vivette Girault for useful discussions and suggestions.

\bibliographystyle{siamplain}
\bibliography{references}
\end{document}